\newtheorem{theorem}{Theorem}[section]
\newtheorem{proposition}[theorem]{Proposition}
\newtheorem{lemma}[theorem]{Lemma}
\theoremstyle{definition}
\newtheorem{definition}[theorem]{Definition}
\newtheorem*{proof sketch}{Proof sketch}
\title{Algorithmic stability implies training-conditional coverage\\ for distribution-free prediction methods}
\author{Ruiting Liang\thanks{Committee on Computational and Applied Mathematics, University of Chicago} \ and Rina Foygel Barber\thanks{Department of Statistics, University of Chicago}}
\date{\today}
\begin{document}

\maketitle

\begin{abstract}
In a supervised learning problem, given a predicted value that is the output of some trained model, how can we quantify our uncertainty around this prediction?
Distribution-free predictive inference aims to construct prediction intervals around this output, with valid coverage that does not rely on assumptions on the distribution of the data or the nature of the model training algorithm. 
Existing methods in this area, including conformal prediction and jackknife+, offer theoretical guarantees that hold marginally (i.e., on average over a draw of training and test data). 
In contrast, training-conditional coverage is a stronger notion of validity that ensures predictive coverage of the test point for most draws of the training data, and is thus a more desirable property in practice. Training-conditional coverage was shown by \citet{vovk2012conditional} to hold for the split conformal method, but recent work by \citet{bian2023training} proves that such validity guarantees are not possible for the full conformal and jackknife+ methods without further assumptions. In this paper, we show that an assumption of algorithmic stability ensures that the training-conditional coverage property holds for the full conformal and jackknife+ methods.
\end{abstract}

\section{Introduction}\label{sec:intro}
The field of distribution-free predictive inference has gained popularity among the modern machine learning community, since it can be paired with any model-fitting algorithm to provide a finite-sample guarantee. Given training data $(X_1,Y_1),\dots,(X_n,Y_n)\in\mathcal{X}\times\mathbb{R}$ (used to train a regression model) and a test feature value $X_{n+1}\in\mathcal{X}$ (for which we would like to predict the unobserved response $Y_{n+1}\in\mathbb{R}$), the aim of a distribution-free method is to construct a prediction interval $\widehat{C}_n(X_{n+1})$ satisfying
the coverage guarantee
\begin{equation}\label{eqn:marginal}
    \mathbb{P} \left\{Y_{n+1} \in \widehat{C}_{n} (X_{n+1}) \right\} \ge 1-\alpha,
\end{equation}
where the only assumption is that the training and test data $(X_1,Y_1),\dots,(X_{n+1},Y_{n+1})$ are i.i.d.\ draws from some distribution $P$---but, no assumptions are placed on $P$, or on the model fitting algorithm. However, this type of guarantee is \emph{marginal}, meaning that the probability of coverage is computed on average over a draw of the training data and test point. In particular, it would be more valuable to have a coverage guarantee conditional on the training data, i.e.,
\[\mathbb{P} \left\{Y_{n+1} \in \widehat{C}_{n} (X_{n+1}) \mid \{(X_i,Y_i)\}_{i=1,\dots,n} \right\} \gtrapprox 1-\alpha,\]
which would imply that the \emph{observed} prediction band $\widehat{C}_n$ will have $\approx 1-\alpha$ coverage over future test data, i.e., with respect to the random draw of the test point $(X_{n+1},Y_{n+1})$. 

To make this goal more precise, define the training data set\footnote{ Here and throughout the paper, we will write ``data set'' to denote a collection of data points, even though the same data point may potentially appear multiple times---that is, more formally, we might refer to $\mathcal{D}_n$ as a \emph{multiset} of data points.} as $\mathcal{D}_n = \{(X_i,Y_i)\}_{i=1,\dots,n}$, and define the conditional noncoverage rate as
\[\alpha_P(\mathcal{D}_n) = \mathbb{P}\left\{Y_{n+1}\not\in \widehat{C}_n(X_{n+1})\mid \mathcal{D}_n\right\},\]
the probability that the prediction band $\widehat{C}_n$ (which itself is a function of the training data set $\mathcal{D}_n$) will fail to cover for a new test point $(X_{n+1},Y_{n+1})$ drawn from $P$. Then the \emph{marginal} coverage guarantee~\eqref{eqn:marginal}
is equivalent to the bound
$\mathbb{E}[\alpha_P(\mathcal{D}_n)] \leq \alpha$.
Of course, this bound does not ensure that $\alpha_P(\mathcal{D}_n)$ will be $\lessapprox\alpha$ for a typical draw of the training data $\mathcal{D}_n$; indeed, in a degenerate setting, we might have the worst-case scenario
\begin{equation}\label{eqn:worst_case}
\alpha_P(\mathcal{D}_n)\approx 1\textnormal{ with probability $\approx \alpha$}; \quad
\alpha_P(\mathcal{D}_n)\approx 0\textnormal{ with probability $\approx 1- \alpha$},\end{equation}
which does not contradict the marginal coverage guarantee, but is clearly highly undesirable from a practical perspective since it means we have a substantial probability of getting a completely unreliable prediction interval.
To avoid this worst-case scenario, we can instead consider a training-conditional coverage guarantee, i.e., a bound of the form
\begin{equation}\label{eqn:trainingcc}
    \mathbb{P} \left\{ \alpha_{P}(\mathcal{D}_{n}) > \alpha + \mathrm{o}(1) \right\} = \mathrm{o}(1).
\end{equation}
This type of guarantee would ensure that we have avoided the worst-case scenario described above.

\subsection{Prior results}
Prior work has considered the question of training-conditional coverage for a range of commonly used distribution-free predictive inference methods: split conformal prediction, full conformal prediction, and jackknife+. We will give background on each of these methods shortly in Section~\ref{sec:background}. \citet{vovk2012conditional} establishes a training-conditional coverage bound of the type~\eqref{eqn:trainingcc} for split conformal prediction. In contrast, \citet{bian2023training} prove that for the full conformal prediction and jackknife+ methods, no such bound can be established without further assumptions; their work constructs specific counterexamples where the worst-case scenario outlined in~\eqref{eqn:worst_case} can occur for these two methods. Moreover, as demonstrated empirically by \citet[Section 5]{bian2023training}, both the jackknife+ and full conformal method can fail to have training-conditional coverage even in a simple linear regression setting, if the sample size and dimension are approximately equal (in which case the problem becomes ill-conditioned, leading to instability). Hence, \citet{bian2023training} conjecture that an assumption of \emph{algorithmic stability} may be sufficient to ensure training-conditional coverage for these methods. 

For the jackknife, previous work has shown that stability leads to training-conditional coverage in an asymptotic sense \citep{steinberger2023conditional}; under additional assumptions, \citet{amann2023assumption} also show finite-sample guarantees for jackknife, which we will discuss more later on in Section~\ref{sec:prior_jack+}.

\subsection{Our contributions}
In this work, we resolve the open question posed by \citet{bian2023training}, and verify that algorithmic stability can indeed enable a training-conditional coverage guarantee of the form~\eqref{eqn:trainingcc} for both jackknife+ and full conformal. Along the way, we will need to examine algorithmic stability in a different way. To describe this informally, let $\widehat{\mu}:\mathcal{X}\rightarrow\mathbb{R}$ be a  regression model fitted to a training set of size $n$, and let $\widehat{\mu}'$ be the model fitted to a perturbed version of the same training set, with $m$ many data points removed or replaced, where $m$ is large (but may be much smaller than $n$). We will see that it is sufficient to assume $\widehat{\mu}(X)\approx \widehat{\mu}'(X)$, i.e., the predictions returned by our fitted model are essentially unchanged when we perturb $m$ (out of $n$) training points: our main results establish that this type of stability assumption ensures a training-conditional coverage guarantee of the type~\eqref{eqn:trainingcc} for both full conformal prediction and jackknife+.

\subsection{Organization of paper}
The remainder of the paper is organized as follows. In Section~\ref{sec:background}, we give additional details on the distribution-free predictive inference methods mentioned above (split conformal, full conformal, and jackknife+), and give some brief background on algorithmic stability. In Section~\ref{sec:main_results}, we introduce our new stability framework and give our main results establishing training-conditional coverage guarantees. Sections~\ref{sec:stability_frameworks} and~\ref{sec:1-vs-m} examine our stability assumptions in more detail, comparing to existing frameworks, building equivalences between different notions of stability and providing examples of stable algorithms. In Section~\ref{sec:proofs} we present the proofs of our main results. We conclude with a short discussion in  Section~\ref{sec:discussion}.

\section{Background}\label{sec:background}
In this section, we review some background on distribution-free prediction methods, and compare the marginal and training-conditional coverage guarantees offered by various common methods. We also give a short overview of the literature on algorithmic stability. 

\subsection{Distribution-free prediction}

Given training data $(X_1,Y_1),\dots,(X_n,Y_n)$ drawn i.i.d.\ from an \emph{unknown} distribution $P$ on $\mathcal{X}\times\mathbb{R}$, we would like to perform inference on predictions for future data points, without relying on (potentially incorrect) assumptions on $P$. At the same time, we want to leverage the power of existing models and algorithms that might provide accurate predictions in many practical settings. For example, if we run a kernel regression on the training data to compute an estimate of the conditional mean function $\widehat{\mu}(x) \approx \mathbb{E}[Y\mid X=x]$, we would like to then compute a prediction interval around this estimate (e.g, $\widehat{\mu}(x)$ plus or minus some margin of error) that will be narrower if $\widehat{\mu}$ is accurate (say, if $P$ satisfies certain smoothness assumptions), but whose validity does not rely on any assumptions.
Before proceeding, we first introduce one piece of notation: throughout this paper, $\mathcal{A}$ will denote a regression algorithm,\footnote{This framework, and all results in this paper, extend in a straightforward way to the case of a randomized regression algorithm (e.g., stochastic gradient descent), but for clarity of the exposition we will only discuss the nonrandom case throughout.} which inputs a training data set of any size (i.e., a set of $(X,Y)$ pairs), and returns a fitted regression function $\widehat{\mu}:\mathcal{X}\rightarrow\mathbb{R}$ that is measurable:
\[\mathcal{A}: \bigcup_{n\geq 0} (\mathcal{X}\times\mathbb{R})^n \rightarrow \{\textnormal{measurable functions $\mathcal{X}\rightarrow\mathbb{R}$}\}.\] From this point on, we will assume without further comment that $\mathcal{A}$ is symmetric in the training data: for each $n\geq 1$ and any permutation $\pi$ on $[n]=\{1,\dots,n\}$, it  must hold that
\begin{equation*}
    \mathcal{A} ((X_{1},Y_{1}),...,(X_{n},Y_{n})) = \mathcal{A}((X_{\pi(1)},Y_{\pi(1)}),...,(X_{\pi(n)},Y_{\pi(n)}))
\end{equation*}
for any data set $(X_1,Y_1),\dots,(X_n,Y_n)$.

With the problem setting in place, we now briefly review three methods for distribution-free prediction---split conformal, full conformal, and the jackknife+.  For further background on these methods, including questions of how the choice of the base algorithm $\mathcal{A}$ affects the resulting prediction interval in terms of length or other properties, and how the resulting prediction intervals compare with more traditional model-based methods, see, e.g., \citet{vovk2005algorithmic,lei2018distribution,sesia2020comparison}.

\subsubsection{Split conformal prediction}
Split conformal prediction \citep{vovk2005algorithmic,papadopoulos2002inductive} begins by partitioning the $n$ available labeled data points into a training set of size $n_0$, $\{(X_i,Y_i)\}_{i=1,\dots,n_0}$, and a holdout or calibration set of size $n_1 = n-n_0$, $\{(X_i,Y_i)\}_{i=n_0+1,\dots,n}$.
Let 
\[\widehat{\mu}_{n_0} = \mathcal{A}\big((X_1,Y_1),\dots,(X_{n_0},Y_{n_0})\big)\]
denote the fitted model when the regression algorithm $\mathcal{A}$ is applied to the training set. The split conformal prediction interval is then given by
\[\widehat{C}^{\textnormal{split}}_n(X_{n+1}) = \widehat{\mu}_{n_0}(X_{n+1})\pm \textnormal{Quantile}\big(\{|Y_i - \widehat{\mu}_{n_0}(X_i)|\}_{i=n_0+1,\dots,n}; (1-\alpha)(1+1/n_1)\big),\]
where the margin of error is essentially the $(1-\alpha)$ quantile of the residuals $|Y_i - \widehat{\mu}_{n_0}(X_i)|$ on the holdout set $i\in\{n_0+1,\dots,n\}$, with a small correction to adjust for the finite sample size.\footnote{Here for split conformal (and for the methods below) we are giving the most basic version of the construction, which produces prediction intervals of the form $\widehat{\mu}_{n_0}(X_{n+1})$
plus or minus a margin of error; alternative implementations of the method can allow for more rich and complex forms of the prediction interval, for example, to account for nonconstant variance in the noise---see, e.g., \citet{lei2018distribution,romano2019conformalized}.}

Split conformal prediction is well known to offer both marginal coverage \citep{vovk2005algorithmic,papadopoulos2002inductive} and training-conditional coverage \citep{vovk2012conditional}---specifically, for any algorithm $\mathcal{A}$ and for data drawn i.i.d.\ from any distribution $P$, the training-conditional noncoverage rate $\alpha^{\textnormal{split}}_P(\mathcal{D}_n) = \mathbb{P}\{Y_{n+1}\not\in\widehat{C}_n^{\textnormal{split}}(X_{n+1})\mid \mathcal{D}_n\}$ satisfies
\[\mathbb{E}[\alpha^{\textnormal{split}}_P(\mathcal{D}_n)]\leq \alpha, \textnormal{ and }\mathbb{P}\left\{ \alpha_P^{\textnormal{split}}(\mathcal{D}_n) > \alpha  + \sqrt{\frac{\log(1/\delta)}{2n_1}}\right\} \leq \delta\]
for any $\delta\in(0,1)$. However, split conformal suffers from the drawback of data splitting: the fitted model $\widehat{\mu}_{n_0}$ will likely be less accurate since it is trained on $n_0$ many, rather than $n$ many, data points (e.g., in practice, $n_0 = n/2$ might be a common choice).

\subsubsection{Full conformal prediction}
In contrast to split conformal, the full conformal prediction method \citep{vovk2005algorithmic} avoids the loss of statistical accuracy incurred by data splitting, at the price of computational cost. The method is defined via the prediction set
\[\widehat{C}^{\textnormal{CP}}_n(X_{n+1}) = \left\{y \in\mathbb{R} : |y - \widehat{\mu}^y(X_{n+1})| \leq \textnormal{Quantile}\big(\{|Y_i - \widehat{\mu}^y(X_i)|\}_{i\in[n]}; (1-\alpha)(1+1/n)\big)\right\}, \]
where for each $y\in\mathbb{R}$, 
\[\widehat{\mu}^y = \mathcal{A}\big((X_1,Y_1),\dots,(X_n,Y_n),(X_{n+1},y)\big)\]
is the fitted model when the algorithm $\mathcal{A}$ is run on the training data together with hypothesized test point $(X_{n+1},y)$.  Here, we refer to $\widehat{C}^{\textnormal{CP}}_n(X_{n+1})$ as a prediction set, rather than prediction interval, since in principle the construction may return a set that is not a connected interval; in practice, however, it is often the case that $\widehat{C}^{\textnormal{CP}}_n(X_{n+1})$ is indeed an interval.

Full conformal satisfies marginal coverage, i.e., $\mathbb{E}[\alpha^{\textnormal{CP}}_P(\mathcal{D}_n)]\leq \alpha$ (where as before we define $\alpha^{\textnormal{CP}}_P(\mathcal{D}_n) = \mathbb{P}\{Y_{n+1}\not\in\widehat{C}_n^{\textnormal{CP}}(X_{n+1})\mid \mathcal{D}_n\}$), again with no assumptions on the unknown distribution $P$ \citep{vovk2005algorithmic}---this is the same as split conformal prediction (and indeed, split conformal can be viewed as a special case of full conformal, thus inheriting its marginal coverage guarantee).
However, the two methods are quite different in terms of training-conditional coverage. Indeed, \citet{bian2023training} show that, unlike for split conformal, no assumption-free training-conditional coverage guarantee is possible for full conformal, by constructing an explicit counterexample of an algorithm $\mathcal{A}$ and a distribution $P$ for which it holds that
\begin{equation}\label{eqn:worst_case_bian}
    \mathbb{P} \left\{\alpha^{\textnormal{CP}}_{P}(\mathcal{D}_{n}) \ge 1-n^{-2} \right\} \ge \alpha - 6\sqrt{\frac{\log n}{n}},
\end{equation}
i.e., the example essentially attains the worst-case scenario described in~\eqref{eqn:worst_case}.  \citet[Section 5]{bian2023training} also demonstrate empirically that full conformal can fail to achieve the training-conditional coverage even with a simple linear regression algorithm, in an ill-conditioned regime.

\subsubsection{Jackknife+}
The jackknife+ \citep{barber2021predictive} is an alternative method that offers a compromise between the statistical efficiency of full conformal and the computational efficiency of split conformal; it is closely related to the cross-conformal procedure \citep{vovk2015cross}. 
For each $i \in [n]$, define
\[
    \widehat{\mu}_{-i} = \mathcal{A}\left((X_{1},Y_{1}),...,(X_{i-1},Y_{i-1}), (X_{i+1},Y_{i+1}),..., (X_{n},Y_{n}) \right) ,
\]
which fits the model to the training data with $(X_i,Y_i)$ removed, and let $R_i = |Y_i - \widehat{\mu}_{-i}(X_i)|$ be the $i$th leave-one-out residual. 
The jackknife+ prediction interval is given by
\begin{multline*}\widehat{C}^{\textnormal{J+}}_n(X_{n+1}) = \Big[-\textnormal{Quantile}\big(\{-\widehat{\mu}_{-i}(X_{n+1}) + R_i \}_{i\in[n]};(1-\alpha)(1+1/n)\big) , \\\textnormal{Quantile}\big(\{\widehat{\mu}_{-i}(X_{n+1}) + R_i \}_{i\in[n]};(1-\alpha)(1+1/n)\big)\Big].
\end{multline*}
The jackknife+ method is known to offer a weaker marginal distribution-free coverage guarantee \citep{barber2021predictive}:  the miscoverage level may be inflated by a factor of $2$, i.e., $\mathbb{E}[\alpha^{\textnormal{J+}}_P(\mathcal{D}_n)]\leq 2\alpha$ (where $\alpha^{\textnormal{J+}}_P(\mathcal{D}_n) = \mathbb{P}\{Y_{n+1}\not\in\widehat{C}_n^{\textnormal{J+}}(X_{n+1})\mid \mathcal{D}_n\}$) is the best assumption-free guarantee. 
Moreover, as for full conformal, the jackknife+ method does not achieve assumption-free training conditional coverage: \citet{bian2023training} construct an explicit example of an algorithm $\mathcal{A}$ and distribution $P$ for which the same worst-case result~\eqref{eqn:worst_case_bian} holds for jackknife+  (and, as for full conformal, also show a simulation where jackknife+  fails to achieve training-conditional coverage in an ill-conditioned linear regression setting).

\subsection{Algorithmic stability}

At a high level, algorithmic stability describes the following property of a modeling algorithm: slight perturbations of the input (i.e., the training data) should have only mild effects on the output (i.e., the predictions returned by the fitted model).
Algorithmic stability as an important property of learning algorithms has been intensively studied in the literature. It has been established that certain notions of stability is essential to derive nontrivial generalization error bounds; related works include \citet{rogers1978finite, devroye1979distribution, devroye1979distribution2, kearns1997algorithmic, bousquet2002stability}. Connections have also been made between stability and effective model selection as in \citet{meinshausen2010stability, shah2013variable, lange2002stability}.

In the statistical literature, there are a range of definitions of stability that have been studied. A common choice is an assumption of the form\footnote{Here $\mathrm{o}_P(1)$ denotes ``little-o in probability'' notation: a sequence of random variables $A_1,A_2,\dots$ satisfies $A_n = \mathrm{o}_P(1)$ if $\mathbb{P}\{|A_n| > \epsilon\}\rightarrow 0$ as $n \to \infty$, for any $\epsilon>0$.}
\begin{equation}\label{eqn:stability_intro}\left|\widehat{\mu}(X_{n+1}) - \widehat{\mu}_{-i}(X_{n+1})\right| = \mathrm{o}_P(1),\end{equation}
where $\widehat{\mu} = \mathcal{A}\big(\{(X_j,Y_j)\}_{j\in[n]}\big)$, while as before we define
$\widehat{\mu}_{-i}= \mathcal{A}\big(\{(X_j,Y_j)\}_{j\in[n]\backslash \{i\}}\big)$---note that $\widehat{\mu}$ and $\widehat{\mu}_{-i}$ are fitted on the same training set except for one data point. Alternatively, we might make this assumption in expectation,
\[\mathbb{E}\left[\left|\widehat{\mu}(X_{n+1}) - \widehat{\mu}_{-i}(X_{n+1})\right|\right]= \mathrm{o}(1). \]This type of assumption is referred to as ``hypothesis stability'' in the work of \citet{kearns1997algorithmic,bousquet2002stability}.

Some algorithms, such as ridge regression and bagging \citep{buhlmann2002analyzing, grandvalet2004bagging, elisseeff2005stability,soloff2024stability,soloff2023bagging}, are known to satisfy algorithmic stability properties due to their construction. However, most modern modeling algorithms are too complex to be analyzed theoretically, i.e., determining whether a stability property is guaranteed to hold is difficult. Recent work by \citet{kim2023black} establishes that testing stability empirically is in fact impossible---any test that provides an assumption-free ``certificate'' of stability cannot have power that is much better than random.

Another line of work that connects stability and inference is closely related to the field of differential privacy, which is typically referred to as \emph{adaptive data analysis}. There, a notion of stability is extracted from differential privacy and is used to construct valid inferences when hypotheses are determined in a data-driven way \citep{dwork2015generalization, dwork2015preserving, rogers2016max}.  \citet{zrnic2023post} build on this idea to derive a framework for post-selection inference using stability achieved by randomization. We note that the notions of stability these works considered are different from ours, and typically much stronger: the privacy literature typically considers uniform notions of stability, i.e., assumptions of the flavor $\max_{i\in[n]}\left|\widehat{\mu}(X_{n+1}) - \widehat{\mu}_{-i}(X_{n+1})\right| = \mathrm{o}_P(1)$, which requires that \emph{no training point} is allowed to be overly influential, while weaker notions of stability only require that \emph{a randomly selected training point} should not be overly influential.

\subsubsection{Connection to distribution-free prediction}\label{sec:stability_connection_to_DF}
Interestingly, under algorithmic stability assumptions, the factor of 2 can be essentially removed from the marginal coverage guarantee for the jackknife+: \citet{barber2021predictive} show that an inflated version of the jackknife+ prediction interval, defined as
\begin{multline}\label{eqn:J+_inflate}\widehat{C}^{\textnormal{$\gamma$-J+}}_n(X_{n+1}) = \Big[-\textnormal{Quantile}\big(\{-\widehat{\mu}_{-i}(X_{n+1}) + R_i \}_{i\in[n]};(1-\alpha)(1+1/n)\big) - \gamma, \\\textnormal{Quantile}\big(\{\widehat{\mu}_{-i}(X_{n+1}) + R_i \}_{i\in[n]};(1-\alpha)(1+1/n)\big) + \gamma\Big]
\end{multline}
for some small inflation parameter $\gamma$,
satisfies a stronger marginal coverage guarantee, under an algorithmic stability assumption. Specifically, a marginal coverage result of the form $\mathbb{E}[\alpha_P^{\textnormal{$\gamma$-J+}}(\mathcal{D}_n)]\leq \alpha + \mathrm{o}(1)$ is achieved under assumptions of the type~\eqref{eqn:stability_intro}.  Relatedly, earlier work by~\citet{steinberger2023conditional} establish asymptotic guarantees for both marginal and training-conditional coverage for the jackknife (rather than jackknife+, i.e., the classical version of leave-one-out cross-validation), under a similar type of algorithmic stability assumption  together with an assumption that the conditional distribution $P_{Y|X}$ has a Lebesgue density. (See also Section~\ref{sec:prior_jack+} below for a discussion of the more recent work of \citet{amann2023assumption} on this question.)

Based on these findings, \citet{bian2023training} conjectured that a stability assumption may be sufficient to ensure training-conditional coverage for jackknife+ and for full conformal prediction---this is the starting point of our present work.

\section{Main results}\label{sec:main_results}
We now turn to our main results, establishing the connection between algorithmic stability and training-conditional coverage for both jackknife+ and full conformal. 

\subsection{Results for jackknife+}
We begin by setting up some notations for quantifying algorithmic stability. The following definition seeks to answer the question: when we run a regression algorithm $\mathcal{A}$ on a training set and use it for prediction on a new test point, how sensitive is the output to a slight perturbation of the training set? 
\begin{definition}[Out-of-sample $m$-stability]\label{def:stability_E_out}
    Given an algorithm $\mathcal{A}$, a distribution $P$ on $\mathcal{X}\times\mathbb{R}$, and any $n,m\geq 0$, we define the \emph{out-of-sample $m$-stability} as
    \[\beta^{\textnormal{out}}_{m,n}(\mathcal{A},P) = \mathbb{E}\left[\big|\widehat{\mu}_n(X)- \widehat{\mu}_{n+m}(X)\big|\right],\]
    where the fitted models are defined as
    \[\widehat{\mu}_n = \mathcal{A}\big((X_1,Y_1),\dots,(X_n,Y_n)\big), \ \ \widehat{\mu}_{n+m} = \mathcal{A}\big((X_1,Y_1),\dots,(X_{n+m},Y_{n+m})\big),\]
    and where the expected value is taken with respect to $(X_1,Y_1),\dots,(X_{n+m},Y_{n+m}),(X,Y)\stackrel{\textnormal{iid}}{\sim}P$.
\end{definition}

We are now ready to state our main result regarding training-conditional coverage for the jackknife+.

\begin{theorem}[Training-conditional coverage for jackknife+]\label{thm:jack_main}
Fix any distribution $P$ on $\mathcal{X}\times\mathbb{R}$, any symmetric algorithm $\mathcal{A}$, any sample size $n\geq 1$, and any inflation parameter $\gamma>0$. Let
\[\alpha_P^{\textnormal{$\gamma$-J+}}(\mathcal{D}_n) = \mathbb{P}\left\{ Y_{n+1}\not\in \widehat{C}_n^{\textnormal{$\gamma$-J+}}(X_{n+1})\mid \mathcal{D}_n\right\}\]
denote the training-conditional miscoverage rate of the $\gamma$-inflated jackknife+ prediction interval $\widehat{C}^{\textnormal{$\gamma$-J+}}_n(X_{n+1})$ defined in~\eqref{eqn:J+_inflate}. 
Then, for any $m\geq 1$ and any $\delta \in (0,1)$,
\[\mathbb{P}\left\{ \alpha_P^{\textnormal{$\gamma$-J+}}(\mathcal{D}_n) < \alpha + 3\sqrt{\frac{\log(1/\delta)}{2\min\{n,m\}}} + 2\sqrt[3]{\frac{2\beta_{m,n-1}^{\textnormal{out}}(\mathcal{A},P)}{\gamma}} \right\} \geq 1 - 3\delta - \sqrt[3]{\frac{2\beta_{m,n-1}^{\textnormal{out}}(\mathcal{A},P)}{\gamma}}\]
where probability is taken with respect to $\mathcal{D}_n = \big\{(X_i,Y_i)\big\}_{i\in[n]}\stackrel{\textnormal{iid}}{\sim} P$, and where $\beta^{\textnormal{out}}_{m,n-1}(\mathcal{A},P)$ is defined as in Definition~\ref{def:stability_E_out}.
\end{theorem}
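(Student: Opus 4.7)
The plan is to couple the jackknife+ procedure to a stabilized ``surrogate'' procedure whose training-conditional miscoverage can be controlled, and to use Definition~\ref{def:stability_E_out} together with two Markov-type applications to show that the coupling holds with high probability once the $\gamma$-inflation is deployed. To set up the surrogate, I would introduce an auxiliary sample $\mathcal{D}' = \{(X'_j, Y'_j)\}_{j\in[m]} \stackrel{\textnormal{iid}}{\sim} P$, independent of $(\mathcal{D}_n, X_{n+1}, Y_{n+1})$, and define for each $i \in [n]$
\[\widetilde{\mu}_i = \mathcal{A}\!\big((\mathcal{D}_n \setminus \{(X_i,Y_i)\}) \cup \mathcal{D}'\big),\]
so that $\widetilde{\mu}_i$ is fit on $n-1+m$ i.i.d.\ points and its training set is a strict superset of $\widehat{\mu}_{-i}$'s. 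By Definition~\ref{def:stability_E_out}, for any $X$ drawn from $P$ and out-of-sample for both fits (e.g.\ $X=X_{n+1}$, and also $X=X_i$, since $(X_i,Y_i)$ is excluded from both training sets),
\[\mathbb{E}\big[|\widehat{\mu}_{-i}(X) - \widetilde{\mu}_i(X)|\big] = \beta^{\textnormal{out}}_{m,n-1}(\mathcal{A},P) =: \beta.\]

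Writing $\widetilde{R}_i = |Y_i - \widetilde{\mu}_i(X_i)|$, $S_i^{\pm} = \pm\widehat{\mu}_{-i}(X_{n+1}) + R_i$, and $\widetilde{S}_i^{\pm} = \pm\widetilde{\mu}_i(X_{n+1}) + \widetilde{R}_i$, the reverse triangle inequality gives $|S_i^{\pm} - \widetilde{S}_i^{\pm}| \leq |\widehat{\mu}_{-i}(X_{n+1}) - \widetilde{\mu}_i(X_{n+1})| + |\widehat{\mu}_{-i}(X_i) - \widetilde{\mu}_i(X_i)|$, so $\mathbb{E}|S_i^{\pm} - \widetilde{S}_i^{\pm}| \leq 2\beta$. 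Setting $\eta = \sqrt[3]{2\beta/\gamma}$ and applying Markov twice (first pointwise at the threshold $\gamma\eta$, then to the count of indices that exceed it), I get that, with probability at least $1 - \eta$ over $(\mathcal{D}_n, \mathcal{D}', X_{n+1})$, the number of ``bad'' indices $i \in [n]$ with $|S_i^{\pm} - \widetilde{S}_i^{\pm}| > \gamma\eta$ is at most $\eta n$. The cube root arises precisely from balancing the inflation budget $\gamma\eta$ against the bad-fraction $\eta$ to make both Markov applications give the same rate.

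On the ``good'' event, a standard quantile-perturbation lemma gives
\[\textnormal{Quantile}\big(\{S_i^+\};(1-\alpha)(1+\tfrac{1}{n})\big) + \gamma \;\geq\; \textnormal{Quantile}\big(\{\widetilde{S}_i^+\};(1-\alpha-\eta)(1+\tfrac{1}{n})\big),\]
with the analogous inequality for the lower endpoint. Hence the $\gamma$-inflated jackknife+ interval contains the ``surrogate interval'' $\widetilde{C}_{\alpha+\eta}(X_{n+1})$ built from the $\widetilde{S}_i^{\pm}$ at the shifted miscoverage level $\alpha + \eta$. It follows that the jackknife+ conditional miscoverage is dominated by that of $\widetilde{C}_{\alpha+\eta}$ plus an additional $\eta$ coming from the bad event; pushing the bad-event probability through Markov once more, over $\mathcal{D}_n$, then yields the $\sqrt[3]{2\beta/\gamma}$ contribution to the final failure probability.

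The main obstacle will be bounding the training-conditional miscoverage of the surrogate interval itself---showing $\mathbb{P}\{Y_{n+1} \notin \widetilde{C}_{\alpha+\eta}(X_{n+1}) \mid \mathcal{D}_n\} \leq \alpha + \eta + 3\sqrt{\log(1/\delta)/(2\min\{n,m\})}$ except on a set of probability $3\delta$. Since the $\widetilde{\mu}_i$'s share the auxiliary sample $\mathcal{D}'$ and differ only in which single point of $\mathcal{D}_n$ they leave out, I expect to exploit exchangeability between $\mathcal{D}_n$ and $\mathcal{D}'$, pass via a further stability/coupling step to a single reference predictor (e.g.\ $\mathcal{A}(\mathcal{D}_n \cup \mathcal{D}')$), and then reduce to a split-conformal-style Hoeffding bound on the indicator $\mathbbm{1}\{Y_{n+1} > Q^+_{\ldots}(\{\widetilde{S}_i^+\})\}$, applied separately for the upper and lower endpoints; the $\min\{n,m\}$ reflects that the effective calibration set can be taken to be the smaller of the two samples. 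Combining everything via a union bound over the bad-coupling event and the three Hoeffding events then gives the announced $3\delta + \sqrt[3]{2\beta/\gamma}$ failure probability and $3\sqrt{\log(1/\delta)/(2\min\{n,m\})} + 2\sqrt[3]{2\beta/\gamma}$ coverage slack. The difficulty is that the $\widetilde{\mu}_i$ are not i.i.d.\ across $i$, so Vovk's classical split-conformal argument does not transfer verbatim---which is why this reduction to a single reference predictor, rather than a direct DKW application, is the delicate part.
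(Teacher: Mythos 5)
Your first coupling step is sound and mirrors the paper's argument: comparing $\widehat{\mu}_{-i}$ (trained on $\mathcal{D}_n\setminus\{(X_i,Y_i)\}$) with $\widetilde{\mu}_i$ (trained on the same set plus $m$ i.i.d.\ points), evaluated at points outside both training sets, is exactly what $\beta^{\textnormal{out}}_{m,n-1}$ controls, and the Markov-twice balancing that produces the cube root $\sqrt[3]{2\beta/\gamma}$ is the same mechanism the paper uses. The genuine gap is the part you yourself flag as the ``main obstacle'': bounding the training-conditional miscoverage of the surrogate interval built from the $\widetilde{S}_i^{\pm}$. Conditional on $\mathcal{D}'$, your surrogate is just the jackknife+ interval for the symmetric base algorithm $\mathcal{B}(\cdot)=\mathcal{A}(\cdot\cup\mathcal{D}')$, and training-conditional coverage of jackknife+ is precisely what fails without further assumptions (this is the Bian--Barber counterexample the theorem is designed to overcome), so you have reduced the theorem to a statement essentially as hard as the theorem itself. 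Your proposed escape route does not go through under the stated hypotheses: coupling each $\widetilde{\mu}_i$ to the single reference predictor $\mathcal{A}(\mathcal{D}_n\cup\mathcal{D}')$ is a \emph{one}-point perturbation at sample size $n+m-1$, and at the evaluation point $X_i$ it is an in/out comparison ($X_i$ is in-sample for the reference, out-of-sample for $\widetilde{\mu}_i$); this is the quantity the paper calls $\beta^{\textnormal{in/out}}$, which is neither implied by $\beta^{\textnormal{out}}_{m,n-1}$ nor small for typical algorithms (Section~\ref{sec:in_out}). Even granting that reduction, the resulting ``split-conformal-style'' step compares in-sample calibration residuals $|Y_i-\bar\mu(X_i)|$, $i\in[n]$, against an out-of-sample test residual $|Y_{n+1}-\bar\mu(X_{n+1})|$, so there is no exchangeability to invoke and no Hoeffding/DKW bound applies.

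The paper avoids this trap by making the extra $m$ points do double duty: they are simultaneously the augmentation of the leave-one-out training sets (so out-of-sample $m$-stability links the oracle scores to the jackknife+ scores, as in your coupling) \emph{and} the test points on which miscoverage is measured. Concretely, the training-conditional rate $\alpha_P^{\textnormal{$\gamma$-J+}}(\mathcal{D}_n)$ is first converted to the empirical miscoverage $S_{\textnormal{test}}/m$ over those $m$ points via a Binomial--Hoeffding bound (this is where one factor of $\sqrt{\log(1/\delta)/2m}$ and the need for large $m$ come from), and then the oracle leave-one-out residuals $\widetilde{R}_1,\dots,\widetilde{R}_{n+m}$, computed within the combined set of $n+m$ points, are exchangeable across the train/test split, so a hypergeometric concentration argument bounds the oracle's empirical test miscoverage against its training quantile. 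No coverage statement for a surrogate interval at a single independent test point is ever needed, which is exactly the step your architecture (independent auxiliary $\mathcal{D}'$ plus a single test point $X_{n+1}$) cannot supply. To repair your proposal you would have to identify $\mathcal{D}'$ with the test points themselves and restate the target as an empirical-miscoverage bound, at which point you recover the paper's proof.
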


To help parse this result, recall that our goal is to show that $\alpha_P(\mathcal{D}_n)$ is not much larger than $\alpha$, with high probability. For this theorem to imply a conclusion of this type, we need to have large sample size $n$ (as indeed is the case even for training-conditional coverage of split conformal), and we also need $\beta^{\textnormal{out}}_{m,n-1}(\mathcal{A},P)$ to be small for some large choice of $m$, i.e., the 
$\mathcal{A}$ must satisfy out-of-sample $m$-stability for some large $m$.

The result in Theorem~\ref{thm:jack_main} above only gives a coverage guarantee for an \emph{inflated} jackknife+ interval---that is, if we take the original prediction interval (or equivalently, choose the inflation parameter to be $\gamma=0$), we do not obtain a meaningful guarantee of training-conditional coverage. However, by adding an additional assumption, we can obtain a guarantee for the \emph{uninflated} interval as well: intuitively, if $Y\mid X$ has a continuous distribution, then the slightly inflated interval should have approximately the same coverage as the original interval itself.

\begin{theorem}[Training-conditional coverage for jackknife+ without inflation]\label{thm:jack_main_uninflated}
    Under the setting and notation of Theorem~\ref{thm:jack_main}, assume also that the conditional distribution of $Y\mid X$ has density $f_{Y|X}(y\mid x)$ with respect to Lebesgue measure, and define
    \[B_{\textnormal{dens}} = \mathbb{E}\left[\sup_{y\in\mathbb{R}}f_{Y|X}(y\mid X)\right],\]
    where the expected value is taken with respect to the marginal distribution of $X$ under $P$. Then, for any $m\geq 1$ and any $\delta\in(0,1)$,
    \[\mathbb{P}\left\{ \alpha_P^{\textnormal{J+}}(\mathcal{D}_n) < \alpha + 3\sqrt{\frac{\log(1/\delta)}{2\min\{n,m\}}} + 4\sqrt[4]{2B_{\textnormal{dens}}\cdot \beta_{m,n-1}^{\textnormal{out}}(\mathcal{A},P)}  \right\} \geq 1 - 3\delta - \sqrt[4]{2B_{\textnormal{dens}}\cdot \beta_{m,n-1}^{\textnormal{out}}(\mathcal{A},P)}.
    \]
\end{theorem}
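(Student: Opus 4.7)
The strategy is to reduce Theorem~\ref{thm:jack_main_uninflated} to Theorem~\ref{thm:jack_main} by bounding how much the uninflated miscoverage rate $\alpha_P^{\textnormal{J+}}(\mathcal{D}_n)$ can exceed the inflated one $\alpha_P^{\gamma\textnormal{-J+}}(\mathcal{D}_n)$, and then tuning $\gamma$ optimally. First, I would observe that by construction of the $\gamma$-inflation in~\eqref{eqn:J+_inflate}, for any $\gamma > 0$ we have the pointwise inclusion $\widehat{C}^{\textnormal{J+}}_n(X_{n+1}) \subseteq \widehat{C}^{\gamma\textnormal{-J+}}_n(X_{n+1})$, and the difference set $\widehat{C}^{\gamma\textnormal{-J+}}_n(X_{n+1}) \setminus \widehat{C}^{\textnormal{J+}}_n(X_{n+1})$ has Lebesgue measure at most $2\gamma$ (this is an elementary fact that covers even the degenerate case where the uninflated interval is empty, since both intervals share the same endpoints shifted by $\pm\gamma$).

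Next, for each realization of $\mathcal{D}_n$ and $X_{n+1}$, the density assumption gives
\[\mathbb{P}\left\{Y_{n+1} \in \widehat{C}^{\gamma\textnormal{-J+}}_n(X_{n+1}) \setminus \widehat{C}^{\textnormal{J+}}_n(X_{n+1}) \,\Big|\, X_{n+1}, \mathcal{D}_n\right\} \leq 2\gamma \cdot \sup_{y \in \mathbb{R}} f_{Y\mid X}(y \mid X_{n+1}),\]
and marginalizing over $X_{n+1}$ then yields the almost-sure pathwise bound $\alpha_P^{\textnormal{J+}}(\mathcal{D}_n) \leq \alpha_P^{\gamma\textnormal{-J+}}(\mathcal{D}_n) + 2\gamma B_{\textnormal{dens}}$, valid for every $\gamma > 0$. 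Combining this with Theorem~\ref{thm:jack_main} shows that, writing $\beta = \beta^{\textnormal{out}}_{m,n-1}(\mathcal{A},P)$, with probability at least $1 - 3\delta - \sqrt[3]{2\beta/\gamma}$,
\[\alpha_P^{\textnormal{J+}}(\mathcal{D}_n) < \alpha + 3\sqrt{\frac{\log(1/\delta)}{2\min\{n,m\}}} + 2\sqrt[3]{\frac{2\beta}{\gamma}} + 2\gamma B_{\textnormal{dens}}.\]

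Finally, I would tune $\gamma$ to balance the two stability-dependent terms. The choice $\gamma = \sqrt[4]{2\beta/B_{\textnormal{dens}}^3}$ makes both $2\gamma B_{\textnormal{dens}}$ and $2\sqrt[3]{2\beta/\gamma}$ equal to $2\sqrt[4]{2 B_{\textnormal{dens}}\beta}$, so their sum is $4\sqrt[4]{2 B_{\textnormal{dens}}\beta}$, and simultaneously the probability penalty becomes $\sqrt[3]{2\beta/\gamma} = \sqrt[4]{2 B_{\textnormal{dens}}\beta}$, exactly matching the bound in the theorem statement. The only real content is the step of bounding $\alpha_P^{\textnormal{J+}} - \alpha_P^{\gamma\textnormal{-J+}}$ by $2\gamma B_{\textnormal{dens}}$; everything else is a direct invocation of Theorem~\ref{thm:jack_main} together with the arithmetic of the optimal $\gamma$. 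The main (mild) obstacle is just verifying the Lebesgue-measure bound carefully so that the argument handles potentially empty or degenerate jackknife+ intervals without fuss.
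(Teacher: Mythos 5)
Your proposal is correct and follows essentially the same route as the paper's own proof: establish the pathwise bound $\alpha_P^{\textnormal{J+}}(\mathcal{D}_n)\leq \alpha_P^{\textnormal{$\gamma$-J+}}(\mathcal{D}_n)+2\gamma B_{\textnormal{dens}}$ via the density assumption and the Lebesgue measure of the set difference, invoke Theorem~\ref{thm:jack_main}, and set $\gamma=\sqrt[4]{2\beta_{m,n-1}^{\textnormal{out}}(\mathcal{A},P)/B_{\textnormal{dens}}^3}$. The arithmetic of the optimal $\gamma$ matches the stated bound exactly, so there is nothing to add.
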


\subsubsection{Preview of the proof technique}
The full proof of Theorem~\ref{thm:jack_main} is fairly technical, and we defer it to Section~\ref{sec:proofs} (along with the proofs of our other main results). However, to give intuition for the role of $m$-stability, we give a short preview here to highlight the underlying ideas. The main strategy is to consider a larger data set, $(X_1,Y_1),\dots,(X_{n+m},Y_{n+m})$. Then taking the first $n$ points, $\mathcal{D}_n = \big((X_1,Y_1),\dots,(X_n,Y_n)\big)$, as the training set, the training-conditional noncoverage $\alpha_P(\mathcal{D}_n)$ can be approximated as the \emph{empirical} noncoverage on the test set $\{(X_i,Y_i)\}_{i=n+1,\dots,n+m}$---this is the reason that we need to take a large value of $m$, i.e., why $1$-stability is not sufficient. The proof then proceeds by comparing the $i$th leave-one-out model, trained on data points indexed by $[n]\backslash\{i\}$, against an ``oracle'' method that trains on a larger set of $n+m-1$ data points given by $[n+m]\backslash\{i\}$; we use the assumption of $m$-stability to ensure that these two models provide similar predictions.

\subsection{Results for full conformal}
In order to extend our results to full conformal prediction, we need to modify our notion of stability. The next definition seeks to quantify whether the prediction returned by an algorithm $\mathcal{A}$ is sensitive to a slight perturbation of the training set, if we are returning a prediction \emph{for a data point in the training set itself}.
\begin{definition}[In-sample $m$-stability]\label{def:stability_E_in}
    Given an algorithm $\mathcal{A}$, a distribution $P$ on $\mathcal{X}\times\mathbb{R}$, and any $n\geq 1$, $m\geq 0$,  we define the \emph{in-sample $m$-stability}\footnote{We point out that our definition here is different from what \citet{barber2021predictive} refer to as ``in-sample stability''---see Section~\ref{sec:in_out} below for details.} as
    \[\beta^{\textnormal{in}}_{m,n}(\mathcal{A},P) = \mathbb{E}\left[\big|\widehat{\mu}_n(X_1)- \widehat{\mu}_{n+m}(X_1)\big|\right],\]
    where the fitted models $\widehat{\mu}_n,\widehat{\mu}_{n+m}$ are defined as before,
    and where the expected value is taken with respect to $(X_1,Y_1),\dots,(X_{n+m},Y_{n+m})\stackrel{\textnormal{iid}}{\sim}P$.
\end{definition}
With this alternative form of stability now defined, we are ready to state our training-conditional guarantee for the full conformal method---again with a small inflation added into the construction of the prediction set. 
\begin{theorem}[Training-conditional coverage for full conformal]\label{thm:CP_main}
Fix any distribution $P$ on $\mathcal{X}\times\mathbb{R}$, any symmetric algorithm $\mathcal{A}$, any sample size $n\geq 1$, and any inflation parameter $\gamma>0$. Let
\[\alpha_P^{\textnormal{$\gamma$-CP}}(\mathcal{D}_n) = \mathbb{P}\left\{ Y_{n+1}\not\in \widehat{C}_n^{\textnormal{$\gamma$-CP}}(X_{n+1})\mid \mathcal{D}_n\right\}\]
denote the training-conditional miscoverage rate of the $\gamma$-inflated full conformal prediction set,
\[\widehat{C}^{\textnormal{$\gamma$-CP}}_n(X_{n+1}) = \Big\{y \in\mathbb{R} : |y - \widehat{\mu}^y(X_{n+1})|  \leq \textnormal{Quantile}\big(\{|Y_i - \widehat{\mu}^y(X_i)|\}_{i\in[n]}; (1-\alpha)(1+1/n)\big) + \gamma\Big\}.\]
Then, for any $m\geq 1$ and any $\delta \in (0,1)$,
\[\mathbb{P}\left\{ \alpha_P^{\textnormal{$\gamma$-CP}}(\mathcal{D}_n) < \alpha + 3\sqrt{\frac{\log(1/\delta)}{2\min\{n,m\}}} + 2\sqrt[3]{\frac{2\beta^{\textnormal{in}}_{m-1,n+1}(\mathcal{A},P)}{\gamma}} \right\}  \geq 1 - 3\delta - \sqrt[3]{\frac{2\beta^{\textnormal{in}}_{m-1,n+1}(\mathcal{A},P)}{\gamma}},\]
where probability is taken with respect to $\mathcal{D}_n = \big\{(X_i,Y_i)\big\}_{i\in[n]}\stackrel{\textnormal{iid}}{\sim} P$, and where $\beta^{\textnormal{in}}_{m-1,n+1}(\mathcal{A},P)$ is defined as in Definition~\ref{def:stability_E_in}.
\end{theorem}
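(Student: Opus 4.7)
Proof proposal. The argument parallels the jackknife+ proof previewed above, with Definition~\ref{def:stability_E_in} (in-sample stability) playing the role that Definition~\ref{def:stability_E_out} did for Theorem~\ref{thm:jack_main}. I would augment the training set with additional i.i.d.\ draws $(X_{n+1},Y_{n+1}),\ldots,(X_{n+m},Y_{n+m})$ from $P$. Conditional on $\mathcal{D}_n$, the indicators $\mathbb{1}\{Y_j \notin \widehat{C}_n^{\gamma\text{-CP}}(X_j)\}$ for $j \in \{n+1,\ldots,n+m\}$ are i.i.d.\ Bernoulli with mean $\alpha_P^{\gamma\text{-CP}}(\mathcal{D}_n)$, so Hoeffding's inequality bounds $\alpha_P^{\gamma\text{-CP}}(\mathcal{D}_n)$ by the empirical miscoverage $\frac{1}{m}\sum_j \mathbb{1}\{Y_j \notin \widehat{C}_n^{\gamma\text{-CP}}(X_j)\}$ plus $\sqrt{\log(1/\delta)/(2m)}$ with probability $\geq 1-\delta$. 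The remainder of the proof controls this empirical miscoverage fraction.

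Next I would introduce the oracle fit $\widehat{\mu}_{\mathrm{or}} = \mathcal{A}(\mathcal{D}_{n+m})$ trained on all $n+m$ points, and for each $j$ compare it with the full-conformal fit $\widehat{\mu}^{[j]} = \mathcal{A}(\mathcal{D}_n \cup \{(X_j,Y_j)\})$ used to define $\widehat{C}_n^{\gamma\text{-CP}}(X_j)$ when $y=Y_j$ is plugged in. By symmetry of $\mathcal{A}$ and the i.i.d.\ structure, Definition~\ref{def:stability_E_in} yields $\mathbb{E}|\widehat{\mu}^{[j]}(X_i) - \widehat{\mu}_\mathrm{or}(X_i)| \leq \beta^{\mathrm{in}}_{m-1,n+1}(\mathcal{A},P)$ for every $i \in [n] \cup \{j\}$: the point $X_i$ is in-sample for both fits, and the larger training set contains exactly $m-1$ more data points than the smaller one. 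Separately, the oracle residuals $\widetilde{R}_i := |Y_i - \widehat{\mu}_\mathrm{or}(X_i)|$ for $i \in [n+m]$ are exchangeable, so the ``oracle miscoverage event'' $\widetilde{R}_j > \mathrm{Quantile}\bigl(\{\widetilde{R}_i\}_{i\in[n]}; (1-\alpha)(1+1/n)\bigr)$ has probability $\leq \alpha$ per $j$, and its empirical fraction over $j \in \{n+1,\ldots,n+m\}$ concentrates around its expectation at rate $O(\sqrt{\log(1/\delta)/\min(m,n)})$ via a standard rank-based Hoeffding-type inequality.

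The crux is transferring the actual miscoverage event (defined by $\widehat{\mu}^{[j]}$) to the oracle event (defined by $\widehat{\mu}_\mathrm{or}$) at a cost proportional only to $\beta^{\mathrm{in}}_{m-1,n+1}$. I would use a quantile-shift lemma: if $|a_i - b_i| \leq \eta$ for all but $k$ indices $i \in [n]$, then $\mathrm{Quantile}(a;\tau) \leq \mathrm{Quantile}(b;\tau + k/n) + \eta$. Applied with $a_i = |Y_i - \widehat{\mu}^{[j]}(X_i)|$, $b_i = \widetilde{R}_i$, $\eta = \gamma/2$, and $V_j = |\{i \in [n] : |\widehat{\mu}^{[j]}(X_i) - \widehat{\mu}_\mathrm{or}(X_i)| > \eta\}|$---combined with in-sample stability at the test point $X_j$ and the $\gamma$-inflation absorbing $2\eta$ of value slack---this yields
\[\mathbb{1}\{Y_j \notin \widehat{C}_n^{\gamma\text{-CP}}(X_j)\} \;\leq\; \mathbb{1}(\text{shifted oracle event}_j) + \mathbb{1}(V_j > k) + \mathbb{1}\bigl(|\widehat{\mu}^{[j]}(X_j) - \widehat{\mu}_\mathrm{or}(X_j)| > \eta\bigr),\]
where the shifted oracle event uses quantile level $(1-\alpha)(1+1/n) + k/n$ and thus has probability $\leq (\alpha - k/(n+1))_+$ by exchangeability. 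Markov's inequality controls the empirical averages of the two ``bad'' indicators in terms of $n\beta^{\mathrm{in}}_{m-1,n+1}/(k\eta)$ and $\beta^{\mathrm{in}}_{m-1,n+1}/\eta$ respectively; balancing the three contributions by setting $\eta \sim \gamma r$ and $k \sim n r$ with $r = \sqrt[3]{2\beta^{\mathrm{in}}_{m-1,n+1}/\gamma}$ produces the cube-root stability rate in the theorem.

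The principal obstacle is precisely this quantile-transfer step: a pointwise union bound over $i \in [n]$ would cost a factor of $n$ in the stability contribution, which is fatal. The rank-shift lemma trades off up to $k$ large-deviation indices against a $k/n$ increase in the oracle quantile level (which only shrinks the oracle miscoverage probability by $k/(n+1)$), while allowing up to $\eta$ of value-level slack to be absorbed into $\gamma$. The cube-root scaling is dictated by the three-way balance among $\eta$, $k$, and the Markov failure budget. Everything else---Hoeffding on the augmented dataset, exchangeability-based concentration of the oracle sum, and Markov on the two stability error sums---assembles cleanly into the stated $3\sqrt{\log(1/\delta)/(2\min(n,m))} + 2\sqrt[3]{2\beta^{\mathrm{in}}_{m-1,n+1}/\gamma}$ bound.
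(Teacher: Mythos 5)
Your overall architecture is essentially the paper's: augment the sample with $m$ fresh points and use Hoeffding to replace $\alpha_P^{\textnormal{$\gamma$-CP}}(\mathcal{D}_n)$ by the empirical test miscoverage; introduce the oracle $\widetilde\mu = \mathcal{A}(\mathcal{D}_{n+m})$ and its residuals $\widetilde R_i$; invoke in-sample $(m-1)$-stability to compare $\widehat\mu^{[j]} = \mathcal{A}(\mathcal{D}_n\cup\{(X_j,Y_j)\})$ (trained on $n+1$ points) with $\widetilde\mu$ at the in-sample points $X_1,\dots,X_n,X_j$; use exchangeability of the oracle residuals for the oracle miscoverage count; and use Markov to control how many indices have a large fit discrepancy. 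Your quantile-shift lemma with $(\eta,k,V_j)$ is just the contrapositive form of the paper's counting step (``conformal event holds but shifted oracle event fails $\Rightarrow$ at least $(\alpha''-\alpha)n$ indices have discrepancy exceeding $\gamma$''), and the final balancing matches the paper's choice of $\alpha'$ and $\delta'$.

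However, your shifted oracle event goes in the wrong direction. To transfer the conformal miscoverage event you need a \emph{lower} bound on the conformal quantile in terms of an oracle quantile, i.e., the swapped form of your lemma: on $\{V_j\le k\}$, $\textnormal{Quantile}\big(\{\widetilde R_i\}_{i\in[n]};\tau - k/n\big) \leq \textnormal{Quantile}\big(\{|Y_i-\widehat\mu^{[j]}(X_i)|\}_{i\in[n]};\tau\big) + \eta$. So the oracle event you land on is at the \emph{down}-shifted level $(1-\alpha)(1+1/n)-k/n$, whose probability is roughly $\alpha + k/(n+1)$, not the up-shifted level with probability $\leq(\alpha-k/(n+1))_+$ as you wrote. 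This is not cosmetic: the $+k/n$ cost, with $k\sim n\sqrt[3]{2\beta^{\textnormal{in}}_{m-1,n+1}/\gamma}$, is exactly one of the two cube-root terms in the theorem, and it is the only reason a balance in $k$ exists at all---with a $-k/(n+1)$ ``gain'' there would be no penalty for taking $k$ large, and the stated constant $2\sqrt[3]{\cdot}$ would not emerge (note also the internal inconsistency between $\eta=\gamma/2$ and $\eta\sim\gamma r$). Once the shift direction is corrected (the paper encodes it by working at a level $\alpha''>\alpha$), the rest goes through; one further point to make rigorous is your ``standard rank-based Hoeffding-type inequality'': the oracle miscoverage indicators across test points are dependent, since they share the training residuals, and the paper handles this by comparing both the train and the test exceedance counts to the quantile of all $n+m$ oracle residuals via hypergeometric (sampling-without-replacement) Hoeffding bounds---the same idea you gesture at, but it does require that two-sided construction.
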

As for the jackknife+ result given above in Theorem~\ref{thm:jack_main}, here we see that the (inflated) full conformal prediction set offers a training-conditional coverage guarantee of the form $\alpha_P(\mathcal{D}_n)\leq \alpha + \mathrm{o}(1)$ with probability $1-\mathrm{o}(1)$, as long as the algorithm $\mathcal{A}$ satisfies in-sample $m$-stability for some large $m$.

We might ask whether, for full conformal, we can also prove a result for the uninflated prediction set (as in Theorem~\ref{thm:jack_main_uninflated}, for jackknife+, which assumes only a bounded density for $Y\mid X$). However, it is likely that a bounded density assumption is not sufficient for deriving an analogous result for full conformal. This is because for jackknife+, the prediction set is in fact an interval, and so inflated set $\widehat{C}_n^{\textnormal{$\gamma$-J+}}(X_{n+1})$ and its uninflated counterpart $\widehat{C}_n^{\textnormal{J+}}(X_{n+1})$ differ only by Lebegue measure $2\gamma$---that is, $\textnormal{Leb}(\widehat{C}_n^{\textnormal{$\gamma$-J+}}(X_{n+1}) \backslash \widehat{C}_n^{\textnormal{J+}}(X_{n+1})) = 2\gamma$. For full conformal, since there is no equivalent simple characterization of how the sets $\widehat{C}_n^{\textnormal{CP}}(X_{n+1})$ and $\widehat{C}_n^{\textnormal{$\gamma$-CP}}(X_{n+1})$ differ, the measure of the set difference $\textnormal{Leb}(\widehat{C}_n^{\textnormal{$\gamma$-CP}}(X_{n+1}) \backslash \widehat{C}_n^{\textnormal{CP}}(X_{n+1}))$ could potentially be arbitrarily large.

\subsection{An asymptotic view}\label{sec:asymp_theory}
With our finite-sample results in place, we now return to the open question posed by \citet{bian2023training}, which asked whether training-conditional coverage can be guaranteed once we assume algorithmic stability (implicitly, $m$-stability with $m=1$, since that is the notion of stability considered by \citet{bian2023training}).

At first glance, it might appear that our results above do not resolve this question: after all, the upper bounds in Theorems~\ref{thm:jack_main} and~\ref{thm:CP_main} are meaningful only if we choose $m \gg 1$, and not for $m=1$. However, we will now see that, from an asymptotic standpoint, our results \emph{do} imply a positive answer to \citet{bian2023training}'s question.

To begin, let us return to \citet{bian2023training}'s question, which we can phrase as follows:
\begin{quote}
    If algorithm $\mathcal{A}$ satisfies a stability assumption, with $\beta^{\textnormal{out}}_{1,n}(\mathcal{A},P) = \mathrm{o}(1)$, then does jackknife+  satisfy a training-conditional coverage guarantee at level $1-\alpha - \mathrm{o}_p(1)$?
\end{quote}
For full conformal, we can also ask the analogous question with in-sample stability in place of out-of-sample stability.

We will now see that our theoretical guarantees that rely on $m$-stability, can actually ensure a guarantee of this seemingly stronger type---that is, asymptotically, training-conditional coverage requires only $1$-stability (with a vanishing value of $\beta^{\textnormal{out}}_{1,n}(\mathcal{A},P)$ or $\beta^{\textnormal{in}}_{1,n}(\mathcal{A},P)$). To ease notation in the theorem statements, we define $\beta^{*,\textnormal{out}}_{1,n} = \sup_{n'\geq n}\beta^{\textnormal{out}}_{1,n'}$, and similarly $\beta^{*,\textnormal{in}}_{1,n} = \sup_{n'\geq n}\beta^{\textnormal{in}}_{1,n'}$. Since, in general, we expect stability to improve with increasing sample size $n$, in practical settings we would likely have $\beta^{*,\textnormal{out}}_{1,n}={\beta}^{\textnormal{out}}_{1,n}$, and same for in-sample stability.
\begin{theorem}[Asymptotic training-conditional coverage for jackknife+]\label{thm:jack_asymp}
Fix any sequences $b_n,\gamma_n>0$ such that $b_n/\gamma_n\to 0$. Then there exists a sequence $\epsilon_n\rightarrow 0$ (constructed explicitly in the proof) such that
\[\lim \limits_{n \to \infty} \sup \limits_{\substack{\mathcal{A}, P\textnormal{ s.t. }\\ \beta_{1,n-1}^{*,\textnormal{out}}(\mathcal{A},P) \le b_{n}}}\mathbb{P}\left\{\alpha_{P}^{\textnormal{$\gamma_n$-J+}}(\mathcal{D}_n)\geq \alpha + \epsilon_n\right\} =0.\]

Moreover, a similar result holds without inflation, under an additional bounded density assumption. For each $B>0$, define a set $\mathcal{P}_B$ of distributions on $(X,Y)\in\mathcal{X}\times\mathbb{R}$, given by
\[\mathcal{P}_B = \Big\{\textnormal{distributions $P$ such that $Y\mid X$ has a conditional density}  \textnormal{satisfying $\mathbb{E}\left[\sup_{y\in\mathbb{R}}f_{Y|X}(y\mid X)\right]\leq B$}\Big\}.\]
Fix any sequences $b_n,B_n>0$ such that $b_n B_n\to 0$. Then there exists a sequence $\epsilon_n\rightarrow 0$ (constructed explicitly in the proof) such that
\[\lim \limits_{n \to \infty} \sup \limits_{\substack{\mathcal{A},P \textnormal{ s.t. $P \in \mathcal{P}_{B_n}$,}\\ \beta_{1,n-1}^{*,\textnormal{out}}(\mathcal{A},P) \le b_{n}}}\mathbb{P}\left\{\alpha_{P}^{\textnormal{J+}}(\mathcal{D}_n)\geq \alpha + \epsilon_n\right\} =0.\]

\end{theorem}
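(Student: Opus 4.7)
The plan is to deduce Theorem~\ref{thm:jack_asymp} directly from the finite-sample bounds of Theorems~\ref{thm:jack_main} and~\ref{thm:jack_main_uninflated}, via a subadditivity inequality that controls $m$-stability by $m$ times the worst-case $1$-stability. This is the essential bridge between the hypothesis of \citet{bian2023training} (which speaks only of $1$-stability) and our finite-sample bounds (which require $m$-stability for some large $m$). The subadditivity lemma is just a telescoping triangle inequality: for any $X$,
\[
\bigl|\widehat{\mu}_{n-1}(X)-\widehat{\mu}_{n-1+m}(X)\bigr| \le \sum_{k=0}^{m-1}\bigl|\widehat{\mu}_{n-1+k}(X)-\widehat{\mu}_{n+k}(X)\bigr|,
\]
and taking expectation over $(X_1,Y_1),\dots,(X_{n-1+m},Y_{n-1+m}),(X,Y)\stackrel{\textnormal{iid}}{\sim}P$, and recognizing each term on the right-hand side as $\beta^{\textnormal{out}}_{1,n-1+k}$, yields
\[
\beta^{\textnormal{out}}_{m,n-1}(\mathcal{A},P) \le \sum_{k=0}^{m-1}\beta^{\textnormal{out}}_{1,n-1+k}(\mathcal{A},P) \le m\cdot \beta^{*,\textnormal{out}}_{1,n-1}(\mathcal{A},P) \le m\, b_n.
\]

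\textbf{Inflated case.} Substituting this bound into Theorem~\ref{thm:jack_main}, for any $m\ge 1$ and any $\delta\in(0,1)$ one obtains
\[
\mathbb{P}\!\left\{\alpha_P^{\textnormal{$\gamma_n$-J+}}(\mathcal{D}_n)\geq \alpha + 3\sqrt{\tfrac{\log(1/\delta)}{2\min\{n,m\}}} + 2\sqrt[3]{\tfrac{2mb_n}{\gamma_n}}\right\}\le 3\delta + \sqrt[3]{\tfrac{2mb_n}{\gamma_n}},
\]
uniformly over every $(\mathcal{A},P)$ satisfying $\beta^{*,\textnormal{out}}_{1,n-1}(\mathcal{A},P)\le b_n$. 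Since $b_n/\gamma_n\to 0$, I would take $m_n=\min\{n,\lceil (\gamma_n/b_n)^{1/2}\rceil\}$, so that $m_n\to\infty$ and $m_nb_n/\gamma_n\to 0$, and pick $\delta_n\to 0$ slowly (e.g.\ $\delta_n=m_n^{-1/4}$) so that $\log(1/\delta_n)/\min\{n,m_n\}\to 0$. Defining $\epsilon_n$ as the sum of the two deterministic error terms at $m=m_n,\delta=\delta_n$ then gives the claim, since both $\epsilon_n$ and the probability bound $3\delta_n+\sqrt[3]{2m_nb_n/\gamma_n}$ tend to $0$; crucially, the construction of $(m_n,\delta_n,\epsilon_n)$ depends only on $(b_n,\gamma_n)$, so the supremum over $(\mathcal{A},P)$ passes inside.

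\textbf{Uninflated case and main obstacle.} The second half is handled by an identical plug-in, using Theorem~\ref{thm:jack_main_uninflated} in place of Theorem~\ref{thm:jack_main}: for $P\in\mathcal{P}_{B_n}$ one has $B_{\textnormal{dens}}\le B_n$, so the stability error term becomes $4\sqrt[4]{2B_n\cdot m_n b_n}$. The hypothesis $b_n B_n\to 0$ lets us choose $m_n=\min\{n,\lceil (b_nB_n)^{-1/2}\rceil\}$ so that $m_n\to\infty$ and $m_nb_nB_n\to 0$, with $\delta_n$ picked as before. The only real obstacle is the rate-matching bookkeeping in selecting $m_n$: it must grow fast enough that the concentration term $\sqrt{\log(1/\delta_n)/\min\{n,m_n\}}$ vanishes, yet slow enough that the accumulated stability error $m_nb_n/\gamma_n$ (respectively $m_nb_nB_n$) also vanishes. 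The hypotheses $b_n/\gamma_n\to 0$ (respectively $b_nB_n\to 0$) are precisely what guarantees that such a window of admissible $m_n$ exists, and as above the resulting $\epsilon_n$ depends only on the scalar sequences in the hypothesis, so uniformity over $(\mathcal{A},P)$ is immediate.
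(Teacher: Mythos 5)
Your proposal is correct and follows essentially the same route as the paper: you re-derive Lemma~\ref{lem:m_vs_1} by telescoping to get $\beta^{\textnormal{out}}_{m,n-1}\le m\,b_n$ under the $1$-stability hypothesis, then plug this into Theorems~\ref{thm:jack_main} and~\ref{thm:jack_main_uninflated} with a slowly growing $m_n$ and vanishing $\delta_n$, exactly as in Appendix~\ref{appA_asy_proof}. The only differences are immaterial bookkeeping choices (e.g.\ $\delta_n=m_n^{-1/4}$ versus the paper's $e^{-\sqrt{\min\{n,m_n\}}}$, and capping $m_n$ at $n$), and your observation that the construction depends only on $(b_n,\gamma_n)$ or $(b_n,B_n)$ correctly justifies uniformity over $(\mathcal{A},P)$.
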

\begin{theorem}[Asymptotic training-conditional coverage for full conformal]\label{thm:CP_asymp} 
Fix any sequences $b_n,\gamma_n>0$ such that $b_n/\gamma_n\to 0$. Then there exists a sequence $\epsilon_n\rightarrow 0$ (constructed explicitly in the proof) such that
\[\lim \limits_{n \to \infty} \sup \limits_{\substack{\mathcal{A}, P\textnormal{ s.t. }\\ \beta_{1,n+1}^{*,\textnormal{in}}(\mathcal{A},P) \le b_{n}}}\mathbb{P}\left\{\alpha_{P}^{\textnormal{$\gamma_n$-CP}}(\mathcal{D}_n)\geq \alpha + \epsilon_n\right\} =0.\]
\end{theorem}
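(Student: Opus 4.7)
The plan is to deduce Theorem~\ref{thm:CP_asymp} as a corollary of the finite-sample bound in Theorem~\ref{thm:CP_main}, in two steps: (i) bridge the hypothesized $1$-stability at sample sizes $\geq n+1$ to a bound on $\beta^{\textnormal{in}}_{m-1, n+1}(\mathcal{A},P)$ for large $m$ via a telescoping triangle inequality, and (ii) tune the free parameters $m = m_n$ and $\delta = \delta_n$ in Theorem~\ref{thm:CP_main} so that all three error contributions vanish as $n \to \infty$. The uniformity over $(\mathcal{A}, P)$ required by the statement will then be automatic, since the resulting $\epsilon_n$ and failure probability will depend only on $n$, $b_n$, and $\gamma_n$.

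For the telescoping, for any realization of $(X_1,Y_1), \dots, (X_{n+m}, Y_{n+m})$ I would write $\widehat{\mu}_{n+1}(X_1) - \widehat{\mu}_{n+m}(X_1)$ as a sum of one-sample increments to get
\[\bigl|\widehat{\mu}_{n+1}(X_1) - \widehat{\mu}_{n+m}(X_1)\bigr| \leq \sum_{k=1}^{m-1}\bigl|\widehat{\mu}_{n+k}(X_1) - \widehat{\mu}_{n+k+1}(X_1)\bigr|,\]
and take expectations to obtain
\[\beta^{\textnormal{in}}_{m-1, n+1}(\mathcal{A}, P) \leq \sum_{k=1}^{m-1}\beta^{\textnormal{in}}_{1, n+k}(\mathcal{A}, P) \leq (m-1)\, \beta^{*,\textnormal{in}}_{1, n+1}(\mathcal{A}, P) \leq (m-1)\, b_n.\]
The supremum-based definition $\beta^{*,\textnormal{in}}_{1,n+1}$ is essential here: the telescoping invokes stability at each of the intermediate sample sizes $n+1, \dots, n+m-1$, and we need a bound that is uniform across this range.

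For the parameter tuning, let $r_n := b_n/\gamma_n \to 0$, take $m_n = \lceil r_n^{-1/2}\rceil$ so that $m_n \to \infty$ while $m_n b_n/\gamma_n = O(\sqrt{r_n}) \to 0$, and choose $\delta_n \to 0$ slowly enough that $\log(1/\delta_n)/\min\{n, m_n\} \to 0$ (e.g., $\delta_n = (\min\{n, m_n\})^{-1/4}$). Substituting these into Theorem~\ref{thm:CP_main} together with the telescoping bound produces the explicit
\[\epsilon_n = 3\sqrt{\frac{\log(1/\delta_n)}{2\min\{n, m_n\}}} + 2\sqrt[3]{\frac{2(m_n-1)\, b_n}{\gamma_n}} \longrightarrow 0,\]
with failure probability at most $3\delta_n + \sqrt[3]{2(m_n-1)b_n/\gamma_n} \to 0$. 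Neither quantity depends on $\mathcal{A}$ or $P$, so the supremum over the stability class vanishes as required. I do not expect a serious obstacle: the argument essentially mirrors the anticipated proof of Theorem~\ref{thm:jack_asymp} with in-sample stability replacing out-of-sample stability. The only subtlety is to verify that the sample-size range in the hypothesis (namely $n' \geq n+1$ in $\beta^{*,\textnormal{in}}_{1, n+1}$) coincides with the range invoked by the telescoping chain, which it does by construction.
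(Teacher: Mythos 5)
Your proposal is correct and follows essentially the same route as the paper: the telescoping bound you derive is exactly Lemma~\ref{lem:m_vs_1} applied to in-sample stability, and the paper likewise proves Theorem~\ref{thm:CP_asymp} by feeding $\beta^{\textnormal{in}}_{m_n-1,n+1}\leq m_n b_n$ into Theorem~\ref{thm:CP_main} with $m_n\approx\lceil(b_n/\gamma_n)^{-1/2}\rceil$ and a vanishing $\delta_n$ (the paper takes $\delta_n=e^{-\sqrt{\min\{n,m_n\}}}$, an immaterial difference from your choice). Your explicit $\epsilon_n$ and failure probability depend only on $n,b_n,\gamma_n$, so the uniformity over $(\mathcal{A},P)$ holds just as in the paper's argument.
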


To help parse these asymptotic results, consider fixing the training algorithm $\mathcal{A}$ and data distribution $P$. Then, these results show us that, as long as $1$-stability holds with some vanishing parameter $\beta_{1,n}(\mathcal{A},P)$ (i.e., out-of-sample or in-sample stability, as appropriate, for jackknife+ or for full conformal), then we can always choose an inflation value $\gamma_n$, with $\gamma_n\to0$ sufficiently slowly,
such that asymptotic training-conditional coverage will hold (or, if we additionally assume $Y\mid X$ has a continuous distribution, then for jackknife+ no inflation is necessary). Thus we can conclude that, from this asymptotic view, $1$-stability is indeed sufficient for establishing training-conditional coverage. To give a preview of the proofs, the idea is that we take a sequence $m_n\rightarrow\infty$ that grows sufficiently slowly, and relate $m_n$-stability (which is needed for applying the finite-sample theorems) to $1$-stability (which is assumed in the asymptotic theorems).

\subsection{Related literature on stability and jackknife+}\label{sec:prior_jack+}
As mentioned in Sections~\ref{sec:intro} and~\ref{sec:background}, several earlier works have also considered the connection between algorithmic stability and the coverage properties of the jackknife+ (or the jackknife). Here we will give more details on these various results, to compare to our present work.

First we consider \emph{marginal} coverage. 
Under an algorithmic stability assumption,  as mentioned in Section~\ref{sec:stability_connection_to_DF}, \citet{barber2021predictive} established that the jackknife+ offers marginal coverage at level $1-\alpha-\mathrm{o}(1)$, i.e., without the factor of 2 appearing in the assumption-free marginal coverage guarantees summarized in Section~\ref{sec:background}. \citet{barber2021predictive} also show a similar result for the jackknife, rather than jackknife+; earlier work by \citet{steinberger2023conditional} also established an asymptotic version of this result for the jackknife. Concretely, adapting the result of \citet[Theorem 5]{barber2021predictive} to our notation, the $\gamma$-inflated jackknife+ interval satisfies a marginal coverage guarantee
\[\mathbb{E}\left[\alpha_P^{\textnormal{$\gamma$-J+}}(\mathcal{D}_n)\right] \leq \alpha +\mathcal{O}\left(\sqrt{\frac{\beta^{\textnormal{out}}_{1,n-1}(\mathcal{A},P)}{\gamma}}\right),\]
and the same is true for the jackknife as well. In particular, this implies that if $\mathcal{A}$ satisfies out-of-sample $1$-stability, i.e., $\beta^{\textnormal{out}}_{1,n-1}(\mathcal{A},P)$ is small, then a small inflation is sufficient to ensure marginal coverage (without a factor of 2) for both jackknife and jackknife+. Moreover, if $Y\mid X$ has a continuous distribution, \citet{barber2021predictive} establish guarantees for the uninflated jackknife+ (or jackknife) interval.

Next, we turn to related results on \emph{training-conditional} coverage. First, as mentioned in Section~\ref{sec:intro}, \citet{steinberger2023conditional} prove that, under some additional assumptions, out-of-sample $1$-stability yields asymptotic training-conditional coverage for the jackknife. These results are strengthened by the recent work of \citet{amann2023assumption},\footnote{\citet{amann2023assumption}'s work appeared online in parallel with this paper.} which establishes finite-sample guarantees for training-conditional coverage, for both jackknife and jackknife+ (see, e.g., \citet[Proposition 5.4, Corollary 5.6]{amann2023assumption}). The results of \citet{amann2023assumption} are complementary to our theorems, as they study the problem from a different framework, although both papers establish finite-sample and asymptotic results for jackknife+ under the same type of stability assumption. Their work allows for nonsymmetric regression algorithms $\mathcal{A}$, and uses some additional mild distributional assumptions to enable concentration type properties of the leave-one-out residuals, leading to their training-conditional coverage guarantees. In contrast, our work uses a new exchangeability-based argument, which means that we do require symmetry of $\mathcal{A}$, but on the other hand are able to extend the technique to prove results for full conformal as well.

\section{Alternative frameworks for stability}\label{sec:stability_frameworks}
In this section, we will examine several different options for defining stability, to understand the different frameworks appearing in the literature and to see how these frameworks relate to the results obtained in our work.

\subsection{Stability in expectation versus in probability}\label{sec:E_or_P}
Our stability definitions explore the expected change in the prediction, e.g.,  out-of-sample stability requires a bound on
\begin{equation}\label{eqn:E_version}\mathbb{E}\left[\left|\widehat\mu_n(X) - \widehat\mu_{n+m}(X)\right|\right]\leq\beta_{m,n}.\end{equation}
For the case $m=1$, this type of definition appears in \citet{bousquet2002stability, elisseeff2005stability}. 
In contrast, we could instead consider a probabilistic bound on the same quantity,
\begin{equation}\label{eqn:P_version}\mathbb{P}\left\{\left|\widehat\mu_n(X) - \widehat\mu_{n+m}(X)\right|\leq \epsilon_{m,n}\right\}\geq 1-\nu_{m,n}.\end{equation}
For the case $m=1$, this type of bound appears in many works in the literature, including \citet{kearns1997algorithmic, soloff2023bagging}.

In fact, the two can be viewed as essentially equivalent (at least, for the case of bounded response $Y$): the probabilistic bound~\eqref{eqn:P_version} implies that $\beta_{m,n}\leq \epsilon_{m,n}+\nu_{m,n}$ if (say) predictions lie in $[0,1]$ almost surely; conversely, by Markov's inequality, the probabilistic bound~\eqref{eqn:P_version} holds with any $\epsilon_{m,n}\nu_{m,n} = \beta_{m,n}$. 

Under the probabilistic version of $m$-stability, as in~\eqref{eqn:P_version}, our theorems can be modified to still hold even without assuming that the response $Y$ is bounded. For completeness, we include these results here:
\begin{theorem}[Training-conditional coverage for jackknife+---alternative version]\label{thm:jack_version_P}
Under the setting and notation of Theorem~\ref{thm:jack_main}, assume also that $\mathcal{A}$ satisfies
\[\mathbb{P}\left\{\left|\widehat\mu_{n-1}(X) - \widehat\mu_{n+m-1}(X)\right|\leq \epsilon_{m,n-1}\right\} \geq 1- \nu_{m,n-1}.\]
Then
\[\mathbb{P}\left\{ \alpha_P^{\textnormal{$2\epsilon_{m,n-1}$-J+}}(\mathcal{D}_n) < \alpha + 3\sqrt{\frac{\log(1/\delta)}{2\min\{n,m\}}} + 2\sqrt[3]{2\nu_{m,n-1}} \right\} \geq 1 - 3\delta - \sqrt[3]{2\nu_{m,n-1}} .\]
Moreover, if the conditional distribution of $Y\mid X$ has density $f_{Y|X}$, then defining $B_{\textnormal{dens}}$ as in Theorem~\ref{thm:jack_main_uninflated} we have
\[\mathbb{P}\left\{ \alpha_P^{\textnormal{J+}}(\mathcal{D}_n) < \alpha + 3\sqrt{\frac{\log(1/\delta)}{2\min\{n,m\}}} + 2\sqrt[3]{2\nu_{m,n-1}} + 4B_{\textnormal{dens}}\cdot \epsilon_{m,n-1}\right\} \geq 1 - 3\delta - \sqrt[3]{2\nu_{m,n-1}} .\]
\end{theorem}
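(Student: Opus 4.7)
The plan is to revisit the proofs of Theorems~\ref{thm:jack_main} and~\ref{thm:jack_main_uninflated} and substitute the probabilistic stability hypothesis for the Markov-type step that, in those proofs, converts the expected stability $\beta^{\textnormal{out}}_{m,n-1}(\mathcal{A},P)$ into a high-probability tail bound on $|\widehat\mu_{n-1}(X) - \widehat\mu_{n+m-1}(X)|$. Because the probabilistic bound is supplied \emph{directly} as a hypothesis rather than derived via Markov's inequality, no implicit boundedness assumption on $Y$ (or on the predictions themselves) is needed; this is precisely the reason the theorem can dispense with boundedness, as foreshadowed in Section~\ref{sec:E_or_P}.

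For the first (inflated) claim, I would trace through the proof of Theorem~\ref{thm:jack_main} and isolate the step where the ratio $\beta^{\textnormal{out}}_{m,n-1}/\gamma$ first enters. The cube-root structure of the penalty $2\sqrt[3]{2\beta^{\textnormal{out}}_{m,n-1}/\gamma}$ comes from a Markov tail bound on $|\widehat\mu_{n-1}(X) - \widehat\mu_{n+m-1}(X)|$ at a threshold proportional to $\gamma$, followed by an optimization. Choosing the inflation parameter $\gamma = 2\epsilon_{m,n-1}$, so that the hypothesis's threshold $\epsilon_{m,n-1}$ aligns with the tail event appearing in the original argument, the probabilistic bound $\nu_{m,n-1}$ plays exactly the role that the Markov quantity $2\beta^{\textnormal{out}}_{m,n-1}/\gamma$ played before. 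Carrying the substitution through the rest of the original proof produces the bound with penalty $2\sqrt[3]{2\nu_{m,n-1}}$ and failure probability $3\delta + \sqrt[3]{2\nu_{m,n-1}}$. Since the exchangeability-based portions of the argument do not interact with this tail inequality, they transfer without modification.

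For the second (uninflated, density) statement, the plan is to combine the inflated bound just derived, applied with $\gamma = 2\epsilon_{m,n-1}$, with the inflation-removal step from the proof of Theorem~\ref{thm:jack_main_uninflated}. Since $\widehat C^{\textnormal{J+}}_n(X_{n+1})$ is an interval, its $\gamma$-inflation differs from it by Lebesgue measure $2\gamma$ pointwise in the training data; the bounded-density assumption then converts this into at most $2\gamma\cdot B_{\textnormal{dens}} = 4\epsilon_{m,n-1}\cdot B_{\textnormal{dens}}$ of additional conditional miscoverage, deterministically in $\mathcal{D}_n$. Adding this increment to the high-probability bound from the first part yields the stated conclusion.

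The main obstacle is purely bookkeeping: identifying inside the proof of Theorem~\ref{thm:jack_main} the precise event whose probability is controlled by the Markov step, and verifying that the choice $\gamma = 2\epsilon_{m,n-1}$ aligns it with the event controlled by the probabilistic hypothesis (including the constant in the threshold, which dictates whether the substitution produces $\nu_{m,n-1}$ or a small multiple thereof inside the cube root). Beyond this alignment, the substitution is local and leaves the structural core of the original proof---the comparison between leave-one-out models and the $(n+m-1)$-point oracle, and the empirical-noncoverage concentration on the auxiliary sample---entirely untouched.
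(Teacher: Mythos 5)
Your proposal is correct and follows essentially the same route as the paper: the paper also reproves Theorem~\ref{thm:jack_main} with the single modification that the Markov step~\eqref{eqn:jack_key_step} is replaced, at inflation level $2\epsilon_{m,n-1}$, by the union bound $\mathbb{P}\{(R_{i;j}-\widetilde R_i)-(R_j-\widetilde R_j)>2\epsilon_{m,n-1}\}\leq \mathbb{P}\{|R_{i;j}-\widetilde R_i|>\epsilon_{m,n-1}\}+\mathbb{P}\{|R_j-\widetilde R_j|>\epsilon_{m,n-1}\}\leq 2\nu_{m,n-1}$, so that $2\nu_{m,n-1}$ takes the place of $2\beta^{\textnormal{out}}_{m,n-1}/\gamma$, and then obtains the uninflated claim exactly as you describe, via the argument of Theorem~\ref{thm:jack_main_uninflated} with $\gamma=2\epsilon_{m,n-1}$, giving the additive term $4\epsilon_{m,n-1}\cdot B_{\textnormal{dens}}$.
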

\begin{theorem}[Training-conditional coverage for full conformal---alternative version]\label{thm:CP_version_P}
Under the setting and notation of Theorem~\ref{thm:CP_main}, assume also that $\mathcal{A}$ satisfies
\[\mathbb{P}\left\{\left|\widehat\mu_{n+1}(X_1) - \widehat\mu_{n+m}(X_1)\right|\leq \epsilon_{m-1,n+1}\right\} \geq 1- \nu_{m-1,n+1}.\]
Then
\[\mathbb{P}\left\{ \alpha_P^{\textnormal{$2\epsilon_{m-1,n+1}$-CP}}(\mathcal{D}_n) < \alpha + 3\sqrt{\frac{\log(1/\delta)}{2\min\{n,m\}}} + 2\sqrt[3]{2\nu_{m-1,n+1}} \right\} \geq 1 - 3\delta - \sqrt[3]{2\nu_{m-1,n+1}} .\]
\end{theorem}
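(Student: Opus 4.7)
My plan is to replicate the proof of Theorem~\ref{thm:CP_main} essentially line by line, with one substitution: wherever the original argument converts the expectation-based in-sample stability $\beta^{\textnormal{in}}_{m-1,n+1}$ into a high-probability bound on the prediction difference $|\widehat{\mu}_{n+1}(X_i) - \widehat{\mu}_{n+m}(X_i)|$ via Markov's inequality, I would instead invoke the assumed probabilistic bound directly. This substitution is what makes the statement meaningful when no magnitude control on $Y$ (or on $\widehat{\mu}$) is available: Markov's inequality only delivers useful information about the prediction shift once the shift is known to be integrable and small in mean, whereas the probabilistic stability bound is informative without any such prerequisite.

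Concretely, I expect the proof of Theorem~\ref{thm:CP_main} to follow the skeleton outlined in the paper's proof preview for Theorem~\ref{thm:jack_main}: introduce an auxiliary dataset $(X_{n+1},Y_{n+1}),\dots,(X_{n+m},Y_{n+m})$, approximate $\alpha_P^{\textnormal{$\gamma$-CP}}(\mathcal{D}_n)$ by the empirical miscoverage on these $m$ auxiliary points via concentration (the source of the $\sqrt{\log(1/\delta)/(2\min\{n,m\})}$ term), and then control this empirical miscoverage by comparing the actual full conformal construction --- which uses $\widehat{\mu}^y$ trained on $\mathcal{D}_n \cup \{(X_{n+1},y)\}$ --- against an oracle variant trained on all $n+m$ points. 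The in-sample stability enters the comparison step, through the shift in residuals $|Y_i - \widehat{\mu}^y(X_i)|$ between the actual and oracle constructions for indices $i \in [n+1]$. In our version, by symmetry of $\mathcal{A}$, the probabilistic assumption says that for each such $i$ the prediction shift is at most $\epsilon_{m-1,n+1}$ with probability at least $1-\nu_{m-1,n+1}$; hence the expected fraction of ``bad'' indices where the shift exceeds $\epsilon_{m-1,n+1}$ is at most $\nu_{m-1,n+1}$. Applying Markov to this fraction and balancing the probability budget against the additive coverage slack produces the cube-root terms $\sqrt[3]{2\nu_{m-1,n+1}}$, in precise analogy with how $\sqrt[3]{2\beta^{\textnormal{in}}_{m-1,n+1}/\gamma}$ appears in Theorem~\ref{thm:CP_main}. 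The inflation $\gamma = 2\epsilon_{m-1,n+1}$ then absorbs the stability slack on both the hypothesized test residual $|y - \widehat{\mu}^y(X_{n+1})|$ and the quantile of training residuals, accounting for the factor of two.

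The main obstacle is really one of bookkeeping: in the expectation-based argument, the single parameter $\gamma$ plays two roles simultaneously (implicit Markov threshold and explicit set inflation), and the cube root arises from optimizing the trade-off between them. In the probabilistic version these roles cleanly decouple --- $\epsilon_{m-1,n+1}$ sets the inflation, $\nu_{m-1,n+1}$ enters the Markov step on the fraction of bad indices --- so the argument is structurally simpler. The only genuine check is to verify that the conversion from expectation stability to a high-probability per-index prediction bound is the unique point at which the expectation form of stability is used in the original proof; every other ingredient (exchangeability underpinning the oracle guarantee, concentration of the sample miscoverage on the $m$ auxiliary points, and the combinatorial passage from residual perturbations to quantile perturbations) depends only on prediction differences and indicator functions, and so transfers to this setting without modification.
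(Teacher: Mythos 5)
Your plan matches the paper's proof: the paper reruns the proof of Theorem~\ref{thm:CP_main} verbatim, replacing only the key step~\eqref{eqn:CP_key_step} by the observation that the event $(\widetilde{R}_j - R_{j;i}) + (R_i - \widetilde{R}_i) > 2\epsilon_{m-1,n+1}$ is contained in the union of $\{|R_{j;i}-\widetilde{R}_j|>\epsilon_{m-1,n+1}\}$ and $\{|R_i-\widetilde{R}_i|>\epsilon_{m-1,n+1}\}$, each of probability at most $\nu_{m-1,n+1}$ by the assumed probabilistic stability, so that $2\nu_{m-1,n+1}$ takes the place of $2\beta^{\textnormal{in}}_{m-1,n+1}/\gamma$ while the oracle/exchangeability step, the Hoeffding and Markov applications, and the choices of $\alpha',\delta'$ carry over unchanged. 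Your proposal is correct and takes essentially the same route.
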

\subsection{Stability when data points are swapped rather than removed/added}\label{sec:swap_preview}
The definitions of stability considered in this paper (and in much of the literature) examine the changes in predictions when we add (or equivalently, remove) a small number of training points to the fitted model, i.e., comparing models fitted on training sets of size $n$ and $n+m$. A similar notion of stability asks whether the trained model changes substantially if we \emph{replace}, rather than add or remove, $m$ many training points. Specifically, for $n\geq m \geq 0$, define
\[\bar{\beta}_{m,n}^{\textnormal{out}}(\mathcal{A},P) = \mathbb{E}\left[\left|\widehat\mu_n(X) - \widehat\mu_n'(X)\right|\right]\]
and (for $m<n$)
\[\bar{\beta}_{m,n}^{\textnormal{in}}(\mathcal{A},P) = \mathbb{E}\left[\left|\widehat\mu_n(X_1) - \widehat\mu_n'(X_1)\right|\right]\]
where 
\[\widehat\mu_n = \mathcal{A}\left((X_1,Y_1),\dots,(X_n,Y_n)\right), \] \[\widehat\mu_n' = \mathcal{A}\left((X_1,Y_1),\dots,(X_{n-m},Y_{n-m}),(X'_{n-m+1},Y'_{n-m+1}),\dots,(X'_n,Y'_n)\right),\]
and where the expected values are taken with respect to all data points $(X,Y),(X_i,Y_i),(X'_i,Y'_i)$ sampled i.i.d.\ from $P$.

Our previous definition of (in- or out-of-sample) stability immediately implies this swapped version of stability as well:
\begin{proposition}\label{prop:stable_implies_swap_stable}
For any $\mathcal{A}$ and any $P$, it holds that
\[\bar{\beta}_{m,n}^{\textnormal{out}}(\mathcal{A},P) \leq 2\beta_{m,n}^{\textnormal{out}}(\mathcal{A},P),\]
and
\[\bar{\beta}_{m,n}^{\textnormal{in}}(\mathcal{A},P) \leq 2\beta_{m,n}^{\textnormal{in}}(\mathcal{A},P).\]
\end{proposition}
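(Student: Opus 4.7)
The plan is a short triangle-inequality argument with a coupled ``bridge'' model built from the union of the two training sets appearing in the definition of $\bar\beta$. Write
\[\widehat\mu_n = \mathcal{A}\bigl((X_1,Y_1),\dots,(X_n,Y_n)\bigr), \quad \widehat\mu_n' = \mathcal{A}\bigl((X_1,Y_1),\dots,(X_{n-m},Y_{n-m}),(X'_{n-m+1},Y'_{n-m+1}),\dots,(X'_n,Y'_n)\bigr),\]
and introduce the bridge model of size $n+m$,
\[\widehat\mu_{n+m}^{\star} = \mathcal{A}\bigl((X_1,Y_1),\dots,(X_n,Y_n),(X'_{n-m+1},Y'_{n-m+1}),\dots,(X'_n,Y'_n)\bigr).\]
By the triangle inequality, pointwise,
\[\left|\widehat\mu_n(\cdot) - \widehat\mu_n'(\cdot)\right| \le \left|\widehat\mu_n(\cdot) - \widehat\mu_{n+m}^{\star}(\cdot)\right| + \left|\widehat\mu_{n+m}^{\star}(\cdot) - \widehat\mu_n'(\cdot)\right|.\]

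Next I would identify each term on the right as an instance of $\beta_{m,n}$. For the out-of-sample case, evaluate at the fresh test point $(X,Y)\sim P$, and take expectations. All of $(X_1,Y_1),\dots,(X_n,Y_n),(X'_{n-m+1},Y'_{n-m+1}),\dots,(X'_n,Y'_n),(X,Y)$ are i.i.d.\ from $P$, so by symmetry of $\mathcal{A}$ in its inputs, the pair $(\widehat\mu_n,\widehat\mu_{n+m}^{\star})$ has the same joint distribution as the pair $(\widehat\mu_n,\widehat\mu_{n+m})$ used in Definition~\ref{def:stability_E_out}; consequently
\[\mathbb{E}\bigl|\widehat\mu_n(X) - \widehat\mu_{n+m}^{\star}(X)\bigr| = \beta^{\textnormal{out}}_{m,n}(\mathcal{A},P).\]
For the second term, the model $\widehat\mu_{n+m}^{\star}$ is obtained from $\widehat\mu_n'$ by appending the $m$ additional i.i.d.\ points $(X_{n-m+1},Y_{n-m+1}),\dots,(X_n,Y_n)$; again by symmetry of $\mathcal{A}$, the same identity gives $\mathbb{E}\bigl|\widehat\mu_n'(X) - \widehat\mu_{n+m}^{\star}(X)\bigr| = \beta^{\textnormal{out}}_{m,n}(\mathcal{A},P)$. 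Adding these two expectations yields the claimed bound $\bar\beta^{\textnormal{out}}_{m,n} \le 2\beta^{\textnormal{out}}_{m,n}$.

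The in-sample bound follows from the same setup, evaluating at $X_1$ rather than at a fresh $X$. Here $X_1$ is a member of the training set of both $\widehat\mu_n$ and $\widehat\mu_n'$ (using $m<n$), and also of $\widehat\mu_{n+m}^{\star}$. The joint distribution of $(\widehat\mu_n,\widehat\mu_{n+m}^{\star},X_1)$ matches that of $(\widehat\mu_n,\widehat\mu_{n+m},X_1)$ in Definition~\ref{def:stability_E_in} by symmetry and the i.i.d.\ assumption, and likewise for $(\widehat\mu_n',\widehat\mu_{n+m}^{\star},X_1)$ after relabeling indices. Thus each of the two terms is equal to $\beta^{\textnormal{in}}_{m,n}(\mathcal{A},P)$, and the triangle inequality yields $\bar\beta^{\textnormal{in}}_{m,n} \le 2\beta^{\textnormal{in}}_{m,n}$.

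There is no real obstacle here beyond careful index bookkeeping: the one thing to double-check is that $\widehat\mu_{n+m}^{\star}$ indeed plays both roles correctly, so that in each of the two triangle-inequality terms the relevant pair of training sets is related by adding exactly $m$ fresh i.i.d.\ points to the smaller one, with the evaluation point either independent of everything (out-of-sample case) or belonging to the common sub-sample $(X_1,Y_1),\dots,(X_{n-m},Y_{n-m})$ (in-sample case). Symmetry of $\mathcal{A}$ then lets us match the joint distributions to those used in Definitions~\ref{def:stability_E_out} and~\ref{def:stability_E_in}, completing the proof.
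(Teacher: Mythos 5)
Your proposal is correct and is essentially the paper's own argument: the paper defines exactly the same bridge model $\widehat\mu_{n+m}$ (union of the two training sets), applies the triangle inequality, and identifies each term with $\beta_{m,n}^{\textnormal{out}}$ (or $\beta_{m,n}^{\textnormal{in}}$) via symmetry of $\mathcal{A}$ and the i.i.d.\ assumption. Your write-up merely spells out the in-sample bookkeeping (including the $m<n$ requirement) that the paper leaves as ``a similar argument.''
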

\begin{proof}[Proof of Proposition~\ref{prop:stable_implies_swap_stable}]
    Let \[\widehat\mu_{n+m} = \mathcal{A}((X_1,Y_1),\dots,(X_n,Y_n),(X'_{n-m+1},Y'_{n-m+1}),\dots,(X'_n,Y'_n)).\]
    For out-of-sample stability, we have
    \[\mathbb{E}\left[\left|\widehat\mu_n(X) - \widehat\mu_n'(X)\right|\right] \leq \mathbb{E}\left[\left|\widehat\mu_n(X) - \widehat\mu_{n+m}(X)\right|\right] + \mathbb{E}\left[\left|\widehat\mu_n'(X) - \widehat\mu_{n+m}(X)\right|\right] \leq 2\beta_{m,n}^{\textnormal{out}}(\mathcal{A},P),\]
    by definition of out-of-sample $m$-stability together with the assumption of symmetry of $\mathcal{A}$.
    A similar argument holds for in-sample stability as well.
\end{proof}

Of course, the converse cannot hold---as an extreme example, we can take an algorithm $\mathcal{A}$ which, when run on a training sample of size $n$, returns the constant function $\widehat\mu(x)\equiv n$. Then $m$-stability is poor (specifically, $\beta_{m,n}^{\textnormal{out}}(\mathcal{A},P) = \beta_{m,n}^{\textnormal{in}}(\mathcal{A},P)=m$), but for swap-stability, we have $\bar{\beta}_{m,n}^{\textnormal{out}}(\mathcal{A},P) = \bar{\beta}_{m,n}^{\textnormal{in}}(\mathcal{A},P)=0$. In other words, the converse fails because the algorithm $\mathcal{A}$ may behave very differently on a training sample of size $n$ versus on a training sample of size $n+m$.

Interestingly, however, we will now see that our training-conditional coverage guarantees, from Theorems~\ref{thm:jack_main} and~\ref{thm:CP_main}, nonetheless hold even with the swap versions of stability.
\begin{theorem}[Training-conditional coverage for jackknife+---swap-stability version]\label{thm:jack_swap}
The finite-sample training-conditional coverage guarantees given in Theorems~\ref{thm:jack_main} and~\ref{thm:jack_main_uninflated} for the jackknife+, hold with $\bar{\beta}_{m,n-1}^{\textnormal{out}}(\mathcal{A},P)$ in place
of $\beta_{m,n-1}^{\textnormal{out}}(\mathcal{A},P)$. Similarly, the asymptotic guarantees given in Theorem~\ref{thm:jack_asymp} hold as well, 
with $\bar\beta^{*,\textnormal{out}}_{1,n-1} = \sup_{n'\geq n-1}\bar\beta^{\textnormal{out}}_{1,n'}$ in place of $\beta^{*,\textnormal{out}}_{1,n-1}$.
\end{theorem}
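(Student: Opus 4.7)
The plan is to adapt the proofs of Theorems~\ref{thm:jack_main}, \ref{thm:jack_main_uninflated}, and \ref{thm:jack_asymp} by changing only the choice of oracle in the core comparison step; the rest of the skeleton is unchanged. As in the original proofs, I would augment the training data to $(X_1,Y_1),\ldots,(X_{n+m},Y_{n+m})\stackrel{\textnormal{iid}}{\sim}P$, use Hoeffding's inequality to relate the training-conditional miscoverage $\alpha_P^{\textnormal{$\gamma$-J+}}(\mathcal{D}_n)$ to the empirical miscoverage of the inflated jackknife+ interval on the $m$ virtual test points $\{(X_{n+j},Y_{n+j})\}_{j\in[m]}$, and then compare each leave-one-out regressor $\widehat\mu_{-i}=\mathcal{A}([n]\backslash\{i\})$ against an exchangeable collection of ``oracle'' regressors so that the quantile-based miscoverage event can be controlled.

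In the proof sketched at the end of Section~\ref{sec:main_results}, the oracle used for index $i\in[n+m]$ is $\widehat\mu^*_{-i}=\mathcal{A}([n+m]\backslash\{i\})$, trained on $n+m-1$ points, so that $\mathbb{E}[|\widehat\mu_{-i}(X)-\widehat\mu^*_{-i}(X)|]\le\beta^{\textnormal{out}}_{m,n-1}$ is an add-stability statement. My plan is to replace this by an oracle trained on only $n-1$ points: define $\widehat\mu^{\sharp}_{-i}=\mathcal{A}(I_i)$ for some $I_i\subseteq[n+m]\backslash\{i\}$ with $|I_i|=n-1$, where for $i\in[n]$ the set $I_i$ is obtained from $[n]\backslash\{i\}$ by swapping (up to) $m$ of its indices for augmented indices in $\{n+1,\ldots,n+m\}$. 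Then $\mathbb{E}[|\widehat\mu_{-i}(X)-\widehat\mu^{\sharp}_{-i}(X)|]$ is controlled by $\bar\beta^{\textnormal{out}}_{m,n-1}(\mathcal{A},P)$ directly from the definition of swap-stability together with symmetry of $\mathcal{A}$ and the iid assumption.

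The main obstacle is that the oracle residuals $R^{\sharp}_i=|Y_i-\widehat\mu^{\sharp}_{-i}(X_i)|$ must be exchangeable over $i\in[n+m]$ for the subsequent quantile-concentration step, and no deterministic map $i\mapsto I_i$ with $|I_i|=n-1$ can be fully permutation-equivariant. I would restore exchangeability by introducing an independent uniformly random permutation $\pi$ of $[n+m]$ and fixing any canonical rule $j\mapsto I_j^{\textnormal{can}}\subseteq[n+m]\backslash\{j\}$ with $|I_j^{\textnormal{can}}|=n-1$ (for example $I_j^{\textnormal{can}}=[n+m]\setminus\{j,j+1,\ldots,j+m\}$ taken cyclically), and then setting $I_i=\pi(I_{\pi^{-1}(i)}^{\textnormal{can}})$. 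Relabeling the data by $\tau$ and correspondingly replacing $\pi$ with $\tau\pi$ preserves both the iid distribution of the data and the uniform distribution of $\pi$, and maps $R^{\sharp}_i$ to $R^{\sharp}_{\tau^{-1}(i)}$, yielding full exchangeability of $(R^{\sharp}_1,\ldots,R^{\sharp}_{n+m})$ after marginalizing over $\pi$. Simultaneously, on each realization of $\pi$ the set $I_i$ differs from $[n]\backslash\{i\}$ by at most $m$ swaps for $i \in [n]$, so the swap-stability bound applies pointwise.

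With the exchangeable swap-oracle collection in hand, the remaining steps of the original proofs transfer verbatim: use Markov's inequality on the stability term to bound the number of indices where LOO and oracle scores deviate by more than a threshold $\tau=\gamma/2$; use exchangeability of the oracle scores to bound the oracle-based miscoverage quantile event; and combine with the Hoeffding bound via a union bound. The bookkeeping change is replacing $\beta^{\textnormal{out}}_{m,n-1}$ by $\bar\beta^{\textnormal{out}}_{m,n-1}$ throughout, yielding the swap-stability analogs of Theorems~\ref{thm:jack_main} and~\ref{thm:jack_main_uninflated}; the uninflated case (Theorem~\ref{thm:jack_main_uninflated}) uses the density assumption to bound $\mathbb{P}\{Y_{n+1}\in\widehat{C}_n^{\textnormal{$\gamma$-J+}}\setminus\widehat{C}_n^{\textnormal{J+}}\}$ in a step that is independent of the stability notion used. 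For the asymptotic extension (Theorem~\ref{thm:jack_asymp}), I would choose $m=m_n\to\infty$ slowly and apply the telescoping inequality $\bar\beta^{\textnormal{out}}_{m,n-1}\le m\cdot\bar\beta^{\textnormal{out}}_{1,n-1}$, so that $\bar\beta^{\textnormal{out}}_{m_n,n-1}\to0$ whenever $\bar\beta^{\textnormal{out}}_{1,n-1}\to 0$, exactly paralleling the asymptotic argument for the add-stability version.
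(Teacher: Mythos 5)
Your plan is essentially correct, but it follows a genuinely different route from the paper. The paper never re-opens the proof of Theorem~\ref{thm:jack_main}: it instead proves a ``weak converse'' to Proposition~\ref{prop:stable_implies_swap_stable} (Proposition~\ref{prop:swap_stability_implies_stability}), constructing a modified algorithm $\mathcal{A}'$ with $\mathcal{A}'_{n-1}=\mathcal{A}_{n-1}$ by defining $\mathcal{A}'_{n+m-1}$ as the \emph{average} of $\mathcal{A}_{n-1}$ over all size-$(n-1)$ subsets, and showing $\beta^{\textnormal{out}}_{m,n-1}(\mathcal{A}',P)\le\bar\beta^{\textnormal{out}}_{m,n-1}(\mathcal{A},P)$. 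Since jackknife+ only ever calls $\mathcal{A}$ on training sets of size $n-1$, Theorems~\ref{thm:jack_main} and~\ref{thm:jack_main_uninflated} are then applied to $\mathcal{A}'$ as black boxes. You instead go inside the proof and replace the oracle $\widetilde\mu_{-i}=\mathcal{A}\big(\{(X_j,Y_j)\}_{j\in[n+m]\backslash\{i\}}\big)$ by size-$(n-1)$ swap oracles $\mathcal{A}(I_i)$, restoring exchangeability of the oracle residuals via an auxiliary uniform permutation; your equivariance argument is correct, and since the final event concerns only $\alpha_P^{\textnormal{$\gamma$-J+}}(\mathcal{D}_n)$, the extra randomization marginalizes out harmlessly. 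Your single random subset is in effect a stochastic version of the paper's average over all subsets---and, notably, the paper's proof of the full-conformal analogue (Theorem~\ref{thm:CP_swap}, Section~\ref{sec:proof_thm:CP_swap}) proceeds much as you propose, by modifying the oracle residuals with subset-averaged models, because the black-box reduction fails for in-sample stability. The paper's route for jackknife+ is shorter and reuses the finite-sample theorems verbatim; yours avoids introducing an auxiliary algorithm and makes the role of swap-stability visible inside the argument.

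One step of your plan needs more than ``directly from the definition of swap-stability.'' The comparisons actually required in the analogue of~\eqref{eqn:jack_key_step} are between $\widehat\mu_{-j}$ (trained on $[n]\backslash\{j\}$) and $\mathcal{A}(I_i)$ for a test index $i$, and between $\widehat\mu_{-j}$ and $\mathcal{A}(I_j)$; under your random permutation these training sets differ by a \emph{random} number $k\le m$ of swapped points, and you cannot force $k=m$ for every pair while keeping the oracle family exchangeable. Passing from $k\le m$ swaps to the bound $\bar\beta^{\textnormal{out}}_{m,n-1}(\mathcal{A},P)$ is precisely the monotonicity statement of Proposition~\ref{prop:beta_swap_order}, which is not a tautology---its proof runs through the rearrangement-type Lemma~\ref{lem:L1_lemma}. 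The paper's own argument invokes the same proposition at the corresponding point, so the gap is filled by a citation rather than new work, but it must be made explicit. With that addition, your finite-sample bounds go through with $\bar\beta^{\textnormal{out}}_{m,n-1}$ replacing $\beta^{\textnormal{out}}_{m,n-1}$, the uninflated case follows from the unchanged density argument, and the asymptotic claim follows from $\bar\beta^{\textnormal{out}}_{m,n-1}\le m\,\bar\beta^{\textnormal{out}}_{1,n-1}$ (Lemma~\ref{lem:m_vs_1}) exactly as in the proof of Theorem~\ref{thm:jack_asymp}.
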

\begin{theorem}[Training-conditional coverage for full conformal---swap-stability version]\label{thm:CP_swap}
The finite-sample training-conditional coverage guarantee given in Theorem~\ref{thm:CP_main} for full conformal, holds with $\bar{\beta}_{m-1,n+1}^{\textnormal{in}}(\mathcal{A},P)$ in place
of $\beta_{m-1,n+1}^{\textnormal{in}}(\mathcal{A},P)$.  Similarly, the asymptotic guarantee given in Theorem~\ref{thm:CP_asymp} holds as well, with $\bar\beta^{*,\textnormal{in}}_{1,n+1} = \sup_{n'\geq n+1}\bar\beta^{\textnormal{in}}_{1,n'}$ in place of $\beta^{*,\textnormal{in}}_{1,n+1}$.
\end{theorem}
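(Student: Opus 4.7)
My plan is to retrace each of the proofs of Theorems~\ref{thm:jack_main}, \ref{thm:jack_main_uninflated}, \ref{thm:CP_main}, \ref{thm:jack_asymp}, and~\ref{thm:CP_asymp} from Section~\ref{sec:proofs}, and to show that each goes through with $\bar\beta^{\textnormal{out}}$ or $\bar\beta^{\textnormal{in}}$ in place of the corresponding $\beta$ parameter. The common backbone of those proofs is to embed the training sample into a larger i.i.d.\ pool $(X_1,Y_1),\dots,(X_{n+m},Y_{n+m})$, to approximate the training-conditional miscoverage $\alpha_P(\mathcal{D}_n)$ by an empirical miscoverage rate over the $m$ extra points via exchangeability plus a Hoeffding-style concentration, and to compare the fitted regression model against an ``oracle'' model whose deviation from the actual model is controlled by the stability parameter.

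\textbf{Key steps.} The plan is to replace the original oracle (which trains on the enlarged pool of $n+m$ points, and so invokes add/remove stability $\beta^{\textnormal{out}}_{m,n-1}$ or $\beta^{\textnormal{in}}_{m-1,n+1}$) with a swap oracle: a model fit on the \emph{same} number of training points as the actual model, but with $m$ (for out-of-sample) or $m-1$ (for in-sample) of the original training points replaced by points from the extras. The deviation bound $\mathbb{E}|\widehat\mu(X)-\widehat\mu^{\mathrm{oracle}}(X)|$ on the relevant query points is then, by direct comparison of the two training sets, controlled by $\bar\beta^{\textnormal{out}}_{m,n-1}$ or $\bar\beta^{\textnormal{in}}_{m-1,n+1}$ via the definitions. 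Provided that the indices ``retained'', ``swapped in'', and ``used for empirical miscoverage evaluation'' partition $[n+m]$ in the same way as in the original proofs, the swap oracle remains a symmetric function of its training subset, so the exchangeability and concentration arguments transfer verbatim. The remaining ingredients (quantile comparison and, for the uninflated jackknife+ case, the density-based argument) do not involve the stability parameter and transfer unchanged, yielding the finite-sample parts of both theorems.

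\textbf{Asymptotic statements and main obstacle.} For the asymptotic parts of Theorems~\ref{thm:jack_swap} and~\ref{thm:CP_swap}, the key additional input is the telescoping inequality
\[\bar\beta^{\textnormal{in}}_{m-1,n+1}(\mathcal{A},P) \le (m-1)\,\bar\beta^{*,\textnormal{in}}_{1,n+1}\]
(and the analogous out-of-sample version), obtained by performing the $m-1$ swaps one at a time: at each sequential step the intermediate data sets remain i.i.d., so by the triangle inequality and symmetry of $\mathcal{A}$ each step contributes at most $\bar\beta^{*,\textnormal{in}}_{1,n+1}$ in expectation. Choosing $m=m_n\to\infty$ slowly enough, exactly as in the proofs of Theorems~\ref{thm:jack_asymp} and~\ref{thm:CP_asymp}, then delivers the stated asymptotic conclusions. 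The main technical obstacle I anticipate is the bookkeeping in the swap oracle construction: unlike the original oracle (which is symmetric in all $n+m$ data points), the swap oracle is only symmetric within its own training subset, so one must choose the subset so that the specific permutations used to establish exchangeability in each original proof -- e.g., permutations of the $m$ ``empirical test'' indices -- lie in the residual symmetry group. Once this index-role assignment is arranged consistently with the original proofs, no genuinely new ideas are required.
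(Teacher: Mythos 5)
Your plan for the full conformal case has a genuine gap at its central step, the construction of the ``swap oracle.'' In the proof of Theorem~\ref{thm:CP_main}, the oracle residuals $\widetilde R_1,\dots,\widetilde R_{n+m}$ must be exchangeable across \emph{all} $n+m$ indices---this is exactly what makes $A_{\textnormal{test}}\mid A$ and $B_{\textnormal{train}}\mid B$ hypergeometric in Step~2---and every comparison $|R_{j;i}-\widetilde R_j|$ and $|R_i-\widetilde R_i|$ must be an \emph{in-sample} comparison, i.e., the query point must belong to the oracle's training set. A single model fitted to one particular set of $n+1$ points (the actual training set with $m-1$ points swapped out for extras) cannot satisfy either requirement: residuals at points inside its training set are in-sample while those outside are out-of-sample, so the residual vector is not exchangeable under the full symmetric group on $[n+m]$; and since a size-$(n+1)$ training set cannot contain all $n+m$ evaluation points, the in-sample swap-stability bound is simply unavailable at the remaining indices. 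You flag this as bookkeeping to be fixed by choosing the subset so that permutations of the $m$ test indices lie in the residual symmetry group, but symmetry over the test indices alone is not enough---the hypergeometric step needs exchangeability over all of $[n+m]$---and no fixed choice of a single subset can deliver it.

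The paper resolves this with a construction your proposal does not contain: the oracle residual at index $j$ is redefined as $\widetilde R_j = \bigl|Y_j - \binom{n+m-1}{n}^{-1}\sum_{S\subseteq[n+m],\,|S|=n+1,\,j\in S}\widetilde\mu_S(X_j)\bigr|$, an average over \emph{all} size-$(n+1)$ subsets containing $j$. This symmetrization restores exchangeability of $(\widetilde R_1,\dots,\widetilde R_{n+m})$ and makes every comparison $|\widehat\mu_{+i}(X_j)-\widetilde\mu_S(X_j)|$ with $j\in S$ an in-sample one; but because the subsets $S$ differ from $[n]\cup\{i\}$ in a varying number $k\le m-1$ of points, one must then invoke the monotonicity of swap-stability in the number of swapped points (Proposition~\ref{prop:beta_swap_order}, itself proved via the rearrangement-type Lemma~\ref{lem:L1_lemma}) to bound the averaged error by $\bar\beta^{\textnormal{in}}_{m-1,n+1}(\mathcal{A},P)$. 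Neither the averaging construction nor the appeal to Proposition~\ref{prop:beta_swap_order} appears in your proposal, and these are precisely the new ideas needed here; indeed the paper notes that the route used for jackknife+ (the weak converse in Proposition~\ref{prop:swap_stability_implies_stability}) breaks down for in-sample stability, which is why a direct ``swap oracle'' transcription of the original argument is not available. Your telescoping bound $\bar\beta^{\textnormal{in}}_{m-1,n+1}\le (m-1)\,\bar\beta^{\textnormal{in}}_{1,n+1}$ for the asymptotic statement is fine (it is the swap part of Lemma~\ref{lem:m_vs_1}), but it only becomes relevant once the finite-sample claim is actually established.
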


For jackknife+, this swap-stability version of our main result holds because we can establish a weaker sort of converse to Proposition~\ref{prop:stable_implies_swap_stable}, for out-of-sample stability. For full conformal, the proof will require a different approach. See the Appendix~\ref{appB} for details.

\subsection{In-sample, out-of-sample, or neither}\label{sec:in_out}
In the results above, we have considered two different notions of stability, with respect to how the test point relates to the training data used for fitting the models. Taking $m=1$ for simplicity, we have considered:
\begin{itemize}
    \item Out-of-sample stability (for jackknife+), which compares two models $\widehat\mu_n$ and $\widehat\mu_{n+1}$ in terms of their predictions made on a test point $X$ that is not contained in the training set for either model;
    \item In-sample stability (for full conformal), which compares two models $\widehat\mu_n$ and $\widehat\mu_{n+1}$ in terms of their predictions made on a test point $X_1$ that is contained in the training set for both models.
\end{itemize}
However, there is a third option: the test point might be in the training set for one model, but not the other.
In particular, \citet{barber2021predictive} consider this alternative type of stability, which (adjusting to the notation of our present paper) can be expressed by the quantity
\[\beta^{\textnormal{in/out}}_{1,n}(\mathcal{A},P) = \mathbb{E}\left[\big|\widehat{\mu}_n(X_{n+1})- \widehat{\mu}_{n+1}(X_{n+1})\big|\right].\]
We write ``in/out'' in this definition to emphasize the fact that, in this case, the test point $X_{n+1}$ is \emph{in} the training sample for one of the models ($\widehat\mu_{n+1}$) but \emph{out} of the training sample for the other ($\widehat\mu_n$).
We remark that \citet{barber2021predictive} refer to this as ``in-sample stability'' but we change the terminology here to allow for all the various types of stability under consideration.

If the algorithm exhibits overfitting (e.g., linear regression with a very large number of features), it is likely that $\beta^{\textnormal{in/out}}_{1,n}(\mathcal{A},P)$ is large, since $\widehat\mu_{n+1}$ overfits to data point $(X_{n+1},Y_{n+1})$ while $\widehat\mu_n$ does not. This is often the case even if out-of-sample stability and in-sample stability both hold (i.e., even if $\beta^{\textnormal{out}}_{1,n}(\mathcal{A},P)$ and $\beta^{\textnormal{in}}_{1,n}(\mathcal{A},P)$ are both small). 
Indeed, the fact that $\beta^{\textnormal{in/out}}_{1,n}(\mathcal{A},P)$ is typically large for most regression algorithms is the reason why we need conformal or cross-validation type methods at all: if this measure of stability were small, then training error  would provide a good approximation to future test errors, i.e., the training residuals $|Y_i -\widehat\mu_n(X_i)|$ could be used to determine an appropriate margin of error for constructing a prediction interval of the form $\widehat{C}_n(X_{n+1}) = \widehat\mu_n(X_{n+1})\pm \textnormal{(some margin of error)}$.

\section{A closer look at $m$-stability}\label{sec:1-vs-m}
 The literature on algorithmic stability has often defined stability with respect to perturbing one training point (either adding or removing one point, or swapping one point). In contrast, in this work we consider the more general definition of $m$-stability, with $m\geq 1$ data points perturbed; setting $m=1$ recovers the more common definitions of the literature. 

In this section, we will see how $m$-stability, the key ingredient for our main results, relates to the more commonly studied $1$-stability property. Our findings will also lead to the proofs of the asymptotic training-conditional results of Theorems~\ref{thm:jack_asymp} and~\ref{thm:CP_asymp}.

\subsection{Relating $1$-stability and $m$-stability}
At a high level, it is intuitive that achieving $m$-stability, for some $m>1$, is more challenging than achieving $1$-stability.
For swap-stability, this intuition is correct, as shown in the following proposition.

\begin{proposition}\label{prop:beta_swap_order}
    Fix any algorithm $\mathcal{A}$ and distribution $P$. For any $n \geq m' > m> 0$,
    \[\bar\beta^{\textnormal{out}}_{m,n}(\mathcal{A},P) \leq \bar\beta^{\textnormal{out}}_{m',n}(\mathcal{A},P).\]
    Similarly, for any $n> m' > m> 0$,
    \[\bar\beta^{\textnormal{in}}_{m,n}(\mathcal{A},P) \leq \bar\beta^{\textnormal{in}}_{m',n}(\mathcal{A},P).\]    
\end{proposition}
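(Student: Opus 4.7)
The plan is to place $\bar\beta^{\textnormal{out}}_{m,n}$ and $\bar\beta^{\textnormal{out}}_{m',n}$ on a common probability space and reduce the monotonicity to the conditional negative-definiteness of the kernel $|s-t|$ on $\mathbb{R}$. Writing $U_i = (X_i,Y_i)$ and $V_i = (X'_i,Y'_i)$ for brevity, I would introduce i.i.d.\ samples $U_1,\dots,U_n,V_{n-m'+1},\dots,V_n \sim P$ along with an independent $X \sim P_X$, and use symmetry of $\mathcal{A}$ to take the swapped positions to be the last $m$ (for $\bar\beta^{\textnormal{out}}_{m,n}$) or the last $m'$ (for $\bar\beta^{\textnormal{out}}_{m',n}$). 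Splitting the perturbed positions into blocks
\[c_U = (U_{n-m'+1},\dots,U_{n-m}),\ a_U = (U_{n-m+1},\dots,U_n),\ c_V = (V_{n-m'+1},\dots,V_{n-m}),\ a_V = (V_{n-m+1},\dots,V_n),\]
and setting $\Phi(c,a) := \mathcal{A}(U_1,\dots,U_{n-m'},c,a)(X)$, the two swap-stability quantities become
\[\bar\beta^{\textnormal{out}}_{m,n} = \mathbb{E}\bigl|\Phi(c_U,a_U) - \Phi(c_U,a_V)\bigr|,\qquad \bar\beta^{\textnormal{out}}_{m',n} = \mathbb{E}\bigl|\Phi(c_U,a_U) - \Phi(c_V,a_V)\bigr|,\]
and the only structural difference is whether the $c$-block is shared or independent across the two arguments of $\Phi$.

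Next I would condition on $\mathcal{F} = \sigma(U_1,\dots,U_{n-m'},X)$, which makes $\Phi$ a deterministic function and leaves the four blocks $c_U, a_U, c_V, a_V$ mutually independent, each i.i.d.\ within its block. Writing $Q_c$ for the pushforward distribution of $a \mapsto \Phi(c,a)$ under $a \sim P^{\otimes m}$, and $E(P,Q) := \iint |s-t|\,dP(s)\,dQ(t)$ with $E(Q) := E(Q,Q)$, I would condition further on $c_U$ in the $m$-swap case (so that $\Phi(c_U,a_U)$ and $\Phi(c_U,a_V)$ are conditionally i.i.d.\ $\sim Q_{c_U}$) and on $(c_U,c_V)$ in the $m'$-swap case (making them independent with laws $Q_{c_U}$ and $Q_{c_V}$). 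This collapses the two conditional expectations to
\[\mathbb{E}\bigl[|\Phi(c_U,a_U) - \Phi(c_U,a_V)| \,\big|\, \mathcal{F}\bigr] = \mathbb{E}_c[E(Q_c)], \qquad \mathbb{E}\bigl[|\Phi(c_U,a_U) - \Phi(c_V,a_V)| \,\big|\, \mathcal{F}\bigr] = E(\mu),\]
where $\mu = \mathbb{E}_c[Q_c]$ and the right-hand identity follows from Fubini and the independence of $c_U, c_V$.

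It thus suffices to prove $\mathbb{E}_c[E(Q_c)] \leq E(\mu)$, which is precisely the conditional negative-definiteness of $K(s,t) = |s-t|$ on $\mathbb{R}$. I would give a self-contained derivation from the identity $|s-t| = \int_{\mathbb{R}}(\mathbbm{1}\{s \leq u\} - \mathbbm{1}\{t \leq u\})^2\, du$: substituting, swapping the order of integration, and introducing the CDFs $F_c(u) = Q_c((-\infty,u])$ gives
\[E(\mu) - \mathbb{E}_c[E(Q_c)] = 2\int_{\mathbb{R}} \mathrm{Var}_c\bigl(F_c(u)\bigr)\, du \geq 0,\]
which is Jensen applied pointwise in $u$. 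Taking the outer expectation over $\mathcal{F}$ then yields $\bar\beta^{\textnormal{out}}_{m,n} \leq \bar\beta^{\textnormal{out}}_{m',n}$. For the in-sample version the identical argument goes through once I note that $m' < n$ guarantees $1 \notin \{n-m'+1,\dots,n\}$, so that the test point $X_1$ is a coordinate of $U_1 \in \mathcal{F}$ and $\Phi$ still evaluates at an $\mathcal{F}$-measurable location.

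The main obstacle I anticipate is the bookkeeping in the conditioning step: verifying that, once $\mathcal{F}$ is fixed, $(c_U,a_U)$ and $(c_U,a_V)$ really share only the variable $c_U$ in the $m$-swap expression while $(c_U,a_U)$ and $(c_V,a_V)$ are fully independent in the $m'$-swap expression. Past that, the conditional negative-definiteness inequality (whose proof is just Jensen applied to the indicator representation of $|s-t|$) does all the remaining work.
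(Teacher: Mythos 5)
Your proposal is correct, and its skeleton coincides with the paper's: after placing everything on a common probability space and conditioning on the untouched points together with the evaluation point, the statement reduces exactly to the paper's Lemma~\ref{lem:L1_lemma} (with $A$ your $\mathcal{F}$-variables, $B$ the block of size $m'-m$, $C$ the block of size $m$), i.e.\ ``expected absolute difference is smaller when the middle block is shared than when it is independently redrawn.'' Where you genuinely diverge is in how that key inequality is proved. The paper conditions on $A$, passes to quantile functions, and uses the identity $\mathbb{E}|g(U)-g(U')| = 4\mathbb{E}[g(U)U]-2\mathbb{E}[g(U)]$ together with the rearrangement inequality; you instead use $|s-t|=\int_{\mathbb{R}}(\mathbf{1}\{s\le u\}-\mathbf{1}\{t\le u\})^2\,du$ to express both conditional expectations through the CDFs $F_c$, so the whole comparison collapses to $\int_{\mathbb{R}}\mathrm{Var}_c(F_c(u))\,du\ge 0$. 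Your route is self-contained and arguably more elementary (only Tonelli and a pointwise Jensen step), and it makes the negative-type/energy-distance structure of the $L^1$ kernel explicit; the paper's route packages the same fact as a standalone lemma about arbitrary independent triples, which is slightly more modular but leans on the rearrangement inequality. Two small points to tidy up: since boundedness is not assumed, the stability quantities could be infinite, so state the last step as the pointwise bound $\mathbb{E}_c\big[F_c(u)\big(1-F_c(u)\big)\big]\le F_\mu(u)\big(1-F_\mu(u)\big)$ and integrate (rather than subtracting two possibly infinite energies); and the appeal to symmetry of $\mathcal{A}$ is not actually needed, since the definitions of $\bar\beta^{\textnormal{out}}_{m,n}$ and $\bar\beta^{\textnormal{out}}_{m',n}$ already place the swapped points in the trailing positions. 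Your treatment of the in-sample case (using $n>m'$ so that $X_1$ is $\mathcal{F}$-measurable) matches the paper's.
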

On the other hand, if we define stability by removing/adding data points rather than by swapping, this may not necessarily hold, due to degenerate counterexamples  (e.g., we can take an algorithm $\mathcal{A}$ which, when run on a training sample of size $n$, returns the constant function $\widehat\mu(x)\equiv C\cdot \textnormal{mod}(n,2)$ for an arbitrary constant $C>0$; then $\beta_{1,n}^{\textnormal{out}}(\mathcal{A},P) = \beta_{1,n}^{\textnormal{in}}(\mathcal{A},P) = C$, but 
$\beta_{m,n}^{\textnormal{out}}(\mathcal{A},P) = \beta_{m,n}^{\textnormal{in}}(\mathcal{A},P) = 0$ for any even $m$). Nonetheless, in practice, we generally expect  
$\beta_{m,n}^{\textnormal{out}}(\mathcal{A},P)$ and $\beta_{m,n}^{\textnormal{in}}(\mathcal{A},P)$ to increase monotonically in $m$.

It also holds that $1$-stability can be used to verify $m$-stability, in a weak sense---at a high level, if $\mathcal{A}$ satisfies $1$-stability at some level $\beta$, then it also satisfies $m$-stability at level $m\beta$. Formally, the bound is given by the following lemma:
\begin{lemma}\label{lem:m_vs_1}
For any algorithm $\mathcal{A}$ and any distribution $P$, the following inequality holds for out-of-sample stability for any $n,m\geq 0$:
\[\beta_{m,n}^{\textnormal{out}}(\mathcal{A},P) \leq \sum_{k=n}^{n+m-1} \beta_{1,k}^{\textnormal{out}}(\mathcal{A},P).\]
Similarly, for any $n\geq m\geq 0$, 
\[\bar{\beta}_{m,n}^{\textnormal{out}}(\mathcal{A},P) \leq m \bar{\beta}_{1,n}^{\textnormal{out}}(\mathcal{A},P).\]
Moreover, if $n>m$ the same results hold with in-sample stability (or in-sample swap-stability) in place of out-of-sample stability (or out-of-sample swap-stability).
\end{lemma}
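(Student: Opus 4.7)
The plan is to telescope the $m$-point perturbation into a chain of $m$ consecutive one-point perturbations and apply the triangle inequality. For the out-of-sample add/remove bound, introduce the intermediate fitted models $\widehat\mu_{n+k} = \mathcal{A}\bigl((X_1,Y_1),\ldots,(X_{n+k},Y_{n+k})\bigr)$ for $k = 0,1,\ldots,m$, so that $\widehat\mu_{n+0}$ and $\widehat\mu_{n+m}$ match the two models appearing in Definition~\ref{def:stability_E_out}. The triangle inequality gives
$$\bigl|\widehat\mu_n(X) - \widehat\mu_{n+m}(X)\bigr| \leq \sum_{k=0}^{m-1} \bigl|\widehat\mu_{n+k}(X) - \widehat\mu_{n+k+1}(X)\bigr|.$$
Taking expectations with respect to $(X_1,Y_1),\ldots,(X_{n+m},Y_{n+m}),(X,Y) \stackrel{\textnormal{iid}}{\sim} P$, the $k$-th summand equals $\beta_{1,n+k}^{\textnormal{out}}(\mathcal{A},P)$, since $\widehat\mu_{n+k}$ and $\widehat\mu_{n+k+1}$ are trained on samples differing by the single added point $(X_{n+k+1},Y_{n+k+1})$ and $X$ is independent of all training data. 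The in-sample variant follows by evaluating at $X_1$ instead of $X$; because $X_1$ belongs to the training sample of every $\widehat\mu_{n+k}$, each telescoping term becomes $\beta_{1,n+k}^{\textnormal{in}}(\mathcal{A},P)$.

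For the swap version, introduce the chain $\widehat\mu^{(k)} = \mathcal{A}\bigl((X_1,Y_1),\ldots,(X_{n-k},Y_{n-k}),(X'_{n-k+1},Y'_{n-k+1}),\ldots,(X'_n,Y'_n)\bigr)$ for $k = 0,\ldots,m$, so that $\widehat\mu^{(0)} = \widehat\mu_n$ and $\widehat\mu^{(m)} = \widehat\mu_n'$. Consecutive models differ by a single swapped data point; combined with the symmetry of $\mathcal{A}$ and the i.i.d.\ structure of the $(X_i,Y_i)$'s and $(X'_j,Y'_j)$'s, the joint law of $(\widehat\mu^{(k)},\widehat\mu^{(k+1)},X)$ matches the joint law appearing in the definition of $\bar\beta_{1,n}^{\textnormal{out}}(\mathcal{A},P)$, so every expected telescoping term equals $\bar\beta_{1,n}^{\textnormal{out}}(\mathcal{A},P)$. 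Summing yields the bound $m\bar\beta_{1,n}^{\textnormal{out}}(\mathcal{A},P)$. The in-sample counterpart is identical with $X$ replaced by $X_1$; the hypothesis $n > m$ is used precisely to ensure that $X_1$ lies in the unswapped block $(X_1,Y_1),\ldots,(X_{n-k},Y_{n-k})$ of every $\widehat\mu^{(k)}$ with $k \leq m$.

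The argument has no substantive obstacle and is essentially bookkeeping. The one point requiring mild care is the distributional identification in the swap case, where one invokes symmetry of $\mathcal{A}$ to absorb any relabeling of training positions and recognize each consecutive pair in the chain as a canonical one-point swap.
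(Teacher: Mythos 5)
Your proposal is correct and matches the paper's own proof essentially line for line: the same telescoping chain of intermediate models with the triangle inequality for the add/remove case, and the same one-point-swap chain (invoking symmetry of $\mathcal{A}$ and the i.i.d.\ structure to identify each term with $\bar\beta_{1,n}$) for the swap case, including the role of $n>m$ for the in-sample variants.
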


In particular, this lemma immediately implies the results of Theorems~\ref{thm:jack_asymp} and~\ref{thm:CP_asymp}. (We defer the detailed proof to the Appendix~\ref{appA_asy_proof}.) However, as we will demonstrate below both empirically (in Section~\ref{sec:1-vs-m-sim}) and theoretically (in Section~\ref{sec:1-vs-m-example}), this result is fundamentally somewhat weak: it does not capture the favorable stability properties that appear to hold in practice, and we will see that the $m$-stability of algorithms typically tends to be much better than this coarse upper bound.

\subsection{An empirical comparison}\label{sec:1-vs-m-sim}
As mentioned above, the upper bound given in Lemma~\ref{lem:m_vs_1}, which effectively tells us that $1$-stability at level $\beta$ implies $m$-stability at level $m\beta$, is loose---it is often the case that the actual out-of-sample or in-sample $m$-stability of an algorithm is far better than this coarse upper bound. To see this, we run a simulation to examine the out-of-sample stability for four algorithms: $k$-nearest neighbors, ridge regression, decision trees, and random forest. Figure~\ref{fig:1-vs-m-sim} shows the simulation results. For $k$-nearest neighbors, the upper bound is actually fairly tight: we see only a mild gap between the actual value of $\beta_{m,n}^{\textnormal{out}}$ and the upper bound provided by the lemma. For the other three algorithms, however, the gap is substantial, showing that the upper bound of the lemma can be quite pessimistic and in particular, showing that the algorithm $\mathcal{A}$ may indeed lead to strong training-conditional coverage results, according to our theorems.

\begin{figure}
\centering
    \includegraphics[width=\textwidth]{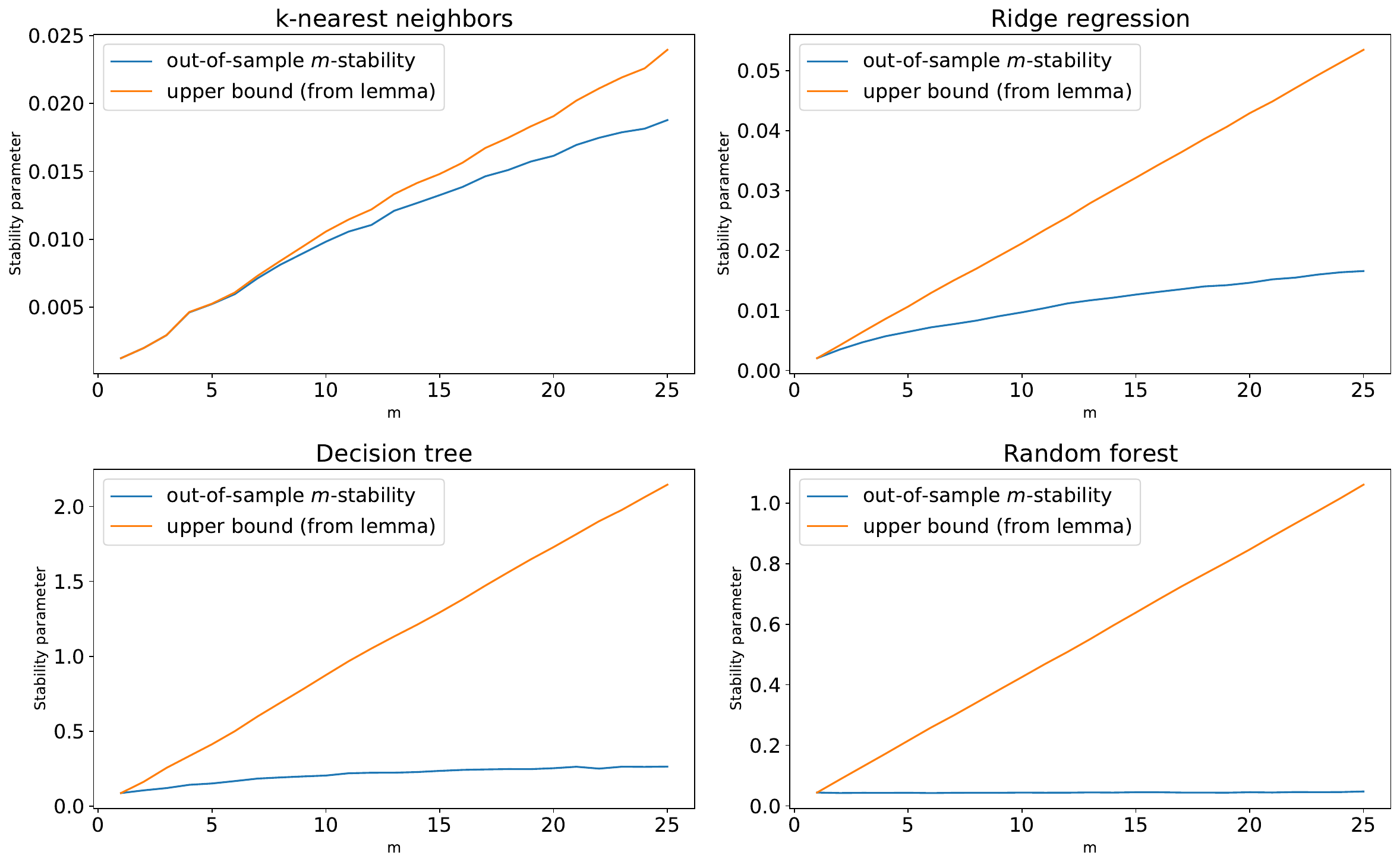}
    \caption{Comparison between the out-of-sample $m$-stability, $\beta_{m,n}^{\textnormal{out}}(\mathcal{A},P)$, and the upper bound on the $m$-stability guaranteed by Lemma~\ref{lem:m_vs_1}, for several choices of the algorithm $\mathcal{A}$. We set $n=500$ for all simulations, and the stability parameters are estimated by averaging over 1000 independent trials. See Section~\ref{sec:1-vs-m-sim} for details.}\label{fig:1-vs-m-sim}
\end{figure}

The details of the simulations are as follows.\footnote{Code to reproduce this simulation is available at \url{https://colab.research.google.com/drive/1XZiB7M9CvZIqqJRVP0dK_l8-aVEkUW-s?usp=sharing}} In each case, we generate data points by sampling i.i.d.\ from the distribution $P$ on $(X,Y)$ given by the following distribution:
\[
X \sim \mathrm{Unif}([0,1]^{d}),\
Y =  \sum_{j=1}^{d} \sin \left(\frac{X_j}{j} \right) + \epsilon\]
where
\[\epsilon\sim \begin{cases} \mathrm{Unif}[-1,1], & \text{ with probability $1/3$},\\
\mathrm{Unif}[-0.1,0.1], & \text{ with probability $2/3$.}\end{cases}
\]
Now we specify the algorithm $\mathcal{A}$ for each of the four cases. Given a training set $(X_1,Y_1),\dots,$ $(X_n,Y_n)$ (for any sample size $n$), the fitted model $\widehat\mu = \mathcal{A}((X_1,Y_1),\dots,(X_n,Y_n))$ is given by:
\begin{itemize}
    \item For $k$-nearest neighbors,
    \begin{equation}\label{eqn:define_kNN}\widehat\mu(x) = \frac{1}{k}\sum_{i\in N_{k;n}(x)}Y_i,\end{equation}
    where $N_{k;n}(x)\subseteq[n]$ contains the indices of the $k=20$ nearest neighbors of $x$, i.e., the $k$ values of $i$ that achieve the smallest distances $\|x - X_i\|_2$ (with ties broken arbitrarily).
    \item For ridge regression,
    \begin{equation}\label{eqn:define_ridge}\widehat\mu(x) = x^\top\widehat\theta,\textnormal{ where }
    \widehat\theta = \arg\min_{\theta\in\mathbb{R}^p}\left\{\frac{1}{n}\sum_{i\in[n]}(Y_i - X_i^\top\theta)^2 + \lambda\|\theta\|^2_2\right\},\end{equation}
    for penalty parameter $\lambda = 0.01$.
    \item For the decision tree, we use the \texttt{DecisionTreeRegressor} function in Python's \texttt{sklearn} package, with parameter \texttt{max\_depth} set to 8.
    \item For random forests, we use the \texttt{RandomForestRegressor} function in Python's \texttt{sklearn} package, with parameter \texttt{max\_depth} set to 8.
\end{itemize}
For each algorithm, we take $n=500$ and $d=40$, and run $1000$ independent trials to empirically estimate the out-of-sample $m$-stability $\beta_{m,n}^{\textnormal{out}}(\mathcal{A},P)$, and the upper bound $\sum_{k=n}^{n+m-1}\beta_{1,k}^{\textnormal{out}}(\mathcal{A},P)$ given in Lemma~\ref{lem:m_vs_1}, for each $m=1,\dots,25$.

\subsection{Examples of $m$-stable algorithms: theoretical results}\label{sec:1-vs-m-example}
In this section, we will examine three types of regression algorithms, to establish theoretical bounds on their $m$-stability. In particular, we will see that, for some of them, the scaling as $m$ increases behaves more favorably than the coarse bound given above in Lemma~\ref{lem:m_vs_1}.
This parallels some of the findings in our simulation in Section~\ref{sec:1-vs-m-sim} above, which showed several practical algorithms that exhibit good $m$-stability, beyond the coarse bound predicted by Lemma~\ref{lem:m_vs_1}.

\subsubsection{$k$-nearest neighbors}
First, we consider the $k$-nearest neighbors algorithm, as defined in~\eqref{eqn:define_kNN} above.
The following result computes a bound on the $m$-stability of this algorithm.
    \begin{proposition}\label{prop:kNN}
        For any distribution $P$ on $\mathcal{X}\times\mathbb{R}$ (where $\mathcal{X}$ is a metric space), any $m\geq 1$, and any $1\leq k\leq n$, if $|Y|\leq B_Y$ holds almost surely under $P$ then the $k$-nearest neighbors algorithm satisfies \[\max\{\beta_{m,n}^{\textnormal{out}}(\mathcal{A},P), \beta_{m,n}^{\textnormal{in}}(\mathcal{A},P) \} \leq 2B_Y \cdot \frac{m}{n+m}.\]
    \end{proposition}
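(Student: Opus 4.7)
The plan is to bound $|\widehat\mu_{n+m}(x) - \widehat\mu_n(x)|$ in terms of the symmetric difference between the nearest-neighbor index sets $N_{k;n}(x)$ and $N_{k;n+m}(x)$, then compute the expectation of that symmetric difference using exchangeability. All ties will be assumed to be broken in a symmetric fashion (e.g., via independent auxiliary randomness), so that $\mathcal{A}$ is symmetric in the training data as required by the paper's standing assumption; under a continuous distribution ties occur with probability zero.

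\textbf{Step 1: a monotonicity lemma for nearest-neighbor sets.} For any test point $x$, I would first show
\[
N_{k;n+m}(x) \cap [n] \;\subseteq\; N_{k;n}(x).
\]
The reasoning: if $i\in [n]$ lies in the top $k$ closest points to $x$ among the full set of $n+m$ training points, then at most $k-1$ points in $[n+m]$ are strictly closer to $x$ than $X_i$, so at most $k-1$ points of the subset $[n]$ are strictly closer, forcing $i\in N_{k;n}(x)$.

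\textbf{Step 2: reducing the prediction gap to set differences.} Define
\[
S \;=\; N_{k;n}(x)\setminus N_{k;n+m}(x), \qquad T \;=\; N_{k;n+m}(x)\setminus N_{k;n}(x).
\]
By Step 1, $T\subseteq\{n+1,\dots,n+m\}$. Since both sets have cardinality $k$, $|S|=|T|$. Then
\[
\widehat\mu_{n+m}(x) - \widehat\mu_n(x) \;=\; \frac{1}{k}\Bigl(\sum_{i\in T} Y_i - \sum_{i\in S} Y_i\Bigr),
\]
and using $|Y_i|\leq B_Y$ almost surely,
\[
\bigl|\widehat\mu_{n+m}(x) - \widehat\mu_n(x)\bigr| \;\leq\; \frac{2 B_Y \, |T|}{k}.
\]

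\textbf{Step 3: exchangeability to control $\mathbb{E}[|T|]$.}
For the out-of-sample case, condition on $X$ (independent of the training data) and use exchangeability of $(X_1,Y_1),\dots,(X_{n+m},Y_{n+m})$. By symmetry, every index $i\in[n+m]$ satisfies $\mathbb{P}\{i \in N_{k;n+m}(X)\} = k/(n+m)$, so
\[
\mathbb{E}[|T|] \;=\; \sum_{i=n+1}^{n+m} \mathbb{P}\{i\in N_{k;n+m}(X)\} \;=\; \frac{mk}{n+m},
\]
which combined with Step 2 yields $\beta_{m,n}^{\textnormal{out}}(\mathcal{A},P) \leq 2B_Y\cdot m/(n+m)$.
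For the in-sample case, the test point is $X_1$, which is always its own nearest neighbor. By exchangeability among the indices $\{2,\dots,n+m\}$, every such $i$ lies in $N_{k;n+m}(X_1)\setminus\{1\}$ with probability $(k-1)/(n+m-1)$, giving
\[
\mathbb{E}[|T|] \;\leq\; \frac{m(k-1)}{n+m-1} \;\leq\; \frac{mk}{n+m},
\]
where the last inequality holds whenever $k\leq n$. The bound on $\beta_{m,n}^{\textnormal{in}}(\mathcal{A},P)$ then follows exactly as before.

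The only subtle point is the tie-breaking convention in the definition of $N_{k;n}(\cdot)$: consistent/deterministic tie-breaking by index would break the exchangeability used in Step 3. Resolving this (either by assuming random tie-breaking independent of the data, or invoking the fact that under mild regularity of $P$ ties have probability zero) is the main technical obstacle, though a minor one. Everything else is routine book-keeping.
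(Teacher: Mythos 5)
Your proposal is correct and follows essentially the same route as the paper's proof: the same monotonicity observation $N_{k;n+m}(x)\cap[n]\subseteq N_{k;n}(x)$, the same reduction of the prediction gap to the symmetric difference of neighbor sets, and the same exchangeability counts ($k/(n+m)$ out-of-sample, $(k-1)/(n+m-1)$ in-sample), with the tie-breaking caveat handled the same way.
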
 
In particular, this result suggests that for $m\leq n$, we have $\beta_{m,n}(\mathcal{A},P) = \mathcal{O}(\frac{m}{n})$ for both in- and out-of-sample stability, which means that $\beta_{m,n}(\mathcal{A},P)  \propto m \cdot \beta_{1,n}(\mathcal{A},P)$, as suggested by the upper bound proved in Lemma~\ref{lem:m_vs_1}. This agrees with the findings of our simulation in Section~\ref{sec:1-vs-m-sim}, where we observed that the bound given in Lemma~\ref{lem:m_vs_1} is relatively tight for the $k$-nearest neighbors algorithm (unlike the other algorithms).

\subsubsection{Ridge regression}
Next, we consider ridge regression, as defined in~\eqref{eqn:define_ridge}. 
\begin{proposition}\label{prop:ridge}
        For any distribution $P$ on $\mathbb{R}^d\times\mathbb{R}$, any $m\geq 1$, and any $\lambda>0$, if $\|X\|_2\leq B_X$ and $|Y|\leq B_Y$ hold almost surely under $P$, then the ridge regression algorithm satisfies \[\max\{\beta_{m,n}^{\textnormal{out}}(\mathcal{A},P), \beta_{m,n}^{\textnormal{in}}(\mathcal{A},P) \}\leq \frac{2B_X^2B_Y}{\lambda}\left(1 + \frac{B_X^2}{\lambda}\right) \cdot \frac{m}{n+m} \cdot\left(\frac{1}{\sqrt{m}}+\frac{1}{\sqrt{n}}\right).\]
\end{proposition}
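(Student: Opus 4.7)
My plan is to work with the closed-form ridge solution. Writing $\widehat\Sigma_n = \frac{1}{n}\sum_{i=1}^n X_iX_i^\top$ and $\widehat v_n = \frac{1}{n}\sum_{i=1}^n X_iY_i$, the fitted coefficient is $\widehat\theta_n = A_n^{-1}\widehat v_n$ with $A_n = \widehat\Sigma_n + \lambda I$, so predictions are linear in $\widehat\theta_n$. For any vector $x$ with $\|x\|_2\leq B_X$ we have $|x^\top(\widehat\theta_n - \widehat\theta_{n+m})|\leq B_X\,\|\widehat\theta_n - \widehat\theta_{n+m}\|_2$; a single bound on $\mathbb{E}\|\widehat\theta_n - \widehat\theta_{n+m}\|_2$ will therefore control both $\beta^{\textnormal{out}}_{m,n}$ (test point a fresh $X$) and $\beta^{\textnormal{in}}_{m,n}$ (test point $X_1$ from the training set), with no in-vs-out distinction required.

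To estimate the parameter difference, I would apply the resolvent identity
\[
\widehat\theta_n - \widehat\theta_{n+m} = A_n^{-1}(\widehat v_n - \widehat v_{n+m}) + A_n^{-1}(\widehat\Sigma_{n+m} - \widehat\Sigma_n)\,A_{n+m}^{-1}\widehat v_{n+m}.
\]
Using $\|A_n^{-1}\|_{\mathrm{op}},\|A_{n+m}^{-1}\|_{\mathrm{op}}\leq 1/\lambda$ and $\|\widehat v_{n+m}\|_2\leq B_XB_Y$, this reduces the claim to bounding $\mathbb{E}\|\widehat v_n - \widehat v_{n+m}\|_2$ and $\mathbb{E}\|\widehat\Sigma_{n+m}-\widehat\Sigma_n\|_{\mathrm{op}}$. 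The key algebraic observation is the decomposition $\widehat\Sigma_{n+m} - \widehat\Sigma_n = \tfrac{m}{n+m}(\widehat\Sigma^{(m)} - \widehat\Sigma_n)$, where $\widehat\Sigma^{(m)} = \tfrac{1}{m}\sum_{i=n+1}^{n+m}X_iX_i^\top$ is independent of $\widehat\Sigma_n$, and its analogue for $\widehat v$. This immediately produces the prefactor $\tfrac{m}{n+m}$.

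To finish, I would add and subtract the population quantities $\Sigma = \mathbb{E}[XX^\top]$ and $v = \mathbb{E}[XY]$, apply Jensen's inequality, and use the standard variance estimate for sample averages of independent bounded quantities. Concretely, $\mathbb{E}\|\widehat\Sigma_k - \Sigma\|_{\mathrm{op}}\leq \mathbb{E}\|\widehat\Sigma_k-\Sigma\|_F\leq B_X^2/\sqrt{k}$ (from $\mathbb{E}\|XX^\top\|_F^2\leq B_X^4$) and $\mathbb{E}\|\widehat v_k - v\|_2\leq B_XB_Y/\sqrt{k}$, for $k\in\{n,m\}$. Summing the two contributions yields the $(1/\sqrt{m}+1/\sqrt{n})$ factor, and assembling all pieces yields the claimed bound with a leading constant of $1$ rather than $2$, leaving slack compared to the stated inequality.

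I do not anticipate a substantive obstacle: the argument is the resolvent identity plus a one-line variance calculation, and operator-norm bounds come for free from $\|\cdot\|_{\mathrm{op}}\leq \|\cdot\|_F$, which sidesteps any need for matrix concentration inequalities. The only items to be careful about are keeping the $\tfrac{m}{n+m}$ factor outside the concentration bounds (so that it multiplies through unchanged to the final expression) and verifying that $\|\widehat v_{n+m}\|_2$ is deterministically bounded by $B_XB_Y$, which follows immediately from $\|X\|_2\leq B_X$ and $|Y|\leq B_Y$.
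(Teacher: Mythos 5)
Your proposal is correct and follows essentially the same route as the paper: reduce both in- and out-of-sample stability to $\mathbb{E}\|\widehat\theta_n-\widehat\theta_{n+m}\|_2$ via $\|x\|_2\le B_X$, split the parameter difference into a $\widehat v$-difference term and a $\widehat\Sigma$-difference term carrying the factor $\tfrac{m}{n+m}$, and control each by comparing to the population mean with the $1/\sqrt{k}$ variance bound and $\|\cdot\|_{\mathrm{op}}\le\|\cdot\|_F$. The only differences are cosmetic: you use the exact resolvent identity where the paper invokes a mean-value form of the derivative of the matrix inverse, and your centered variance bound $\mathbb{E}\|Z-\mathbb{E}Z\|^2\le\mathbb{E}\|Z\|^2$ gives constant $1$ where the paper's cruder triangle-inequality bound gives $2$, so your argument is slightly tighter but otherwise the same.
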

Observe that, for $m\leq n$ (and treating $\lambda$ as a constant), this result suggests that $\beta_{m,n}(\mathcal{A},P) = \mathcal{O}(\frac{\sqrt{m}}{n})$ (again, for both in- and out-of-sample stability). In particular, this means that $\beta_{m,n}(\mathcal{A},P)\ll m \cdot \beta_{1,n}(\mathcal{A},P)$, meaning that the upper bound of Lemma~\ref{lem:m_vs_1} is likely to be extremely loose for large $m$. This is corroborated by the simulation in Section~\ref{sec:1-vs-m-sim}, where for ridge regression, the bound given in Lemma~\ref{lem:m_vs_1} is relatively loose empirically.

\subsubsection{Bagging}
Finally, we will consider bagged algorithms. Bagging refers to an approach towards stabilizing an algorithm $\mathcal{A}$ by re-running it multiple times on perturbed or subsampled versions of the data set, then averaging the output; see, e.g., \citet{breiman1996bagging,dietterich2000ensemble,breiman2001random} for background on this well-known technique. Concretely, given a base algorithm $\mathcal{A}$, we will consider the following definition of its bagged version $\mathcal{A}_{\textnormal{bag}}$.\footnote{Here we work with a specific implementation of bagging, sometimes referred to as ``subbagging'' in the literature, where the resampled data sets are drawn via sampling without replacement. Similar results also hold for other resampling schemes, such as sampling with replacement (often called ``bootstrapping'').} For a data set $\mathcal{D} = \{(X_i,Y_i)\}_{i\in[n]}$, $\mathcal{A}_{\textnormal{bag}}$ is a (randomized) procedure that samples $B$ many data sets of size $N$,
\[\mathcal{D}^{(b)} = \{(X_{i^{(b)}_j},Y_{i^{(b)}_j})\}_{j\in[N]},\]
where $i^{(b)}_1,\dots,i^{(b)}_N\in[n]$ is a subsample drawn uniformly at random from $[n]$ without replacement, e.g., for $N=n/2$. Writing $\widehat{\mu}^{(b)} = \mathcal{A}(\mathcal{D}^{(b)})$ as the fitted model for each subsampled data set, the output of the bagged algorithm is then given by the function that returns the averaged prediction, \[\widehat{\mu}(x) = \frac{1}{B}\sum_{b=1}^B \widehat\mu^{(b)}(x).\]

 Recent work by \citet{soloff2023bagging,soloff2024stability} establishes the $1$-stability of bagging for \emph{any} base algorithm $\mathcal{A}$---in particular, if we assume that models produced by $\mathcal{A}$ return outputs lying in $[0,1]$, \citet[Corollary 15]{soloff2023bagging} implies that
 \[\beta^{\textnormal{out}}_{1,n}(\mathcal{A}_{\textnormal{bag}},P)\leq \sqrt{\frac{1}{4n}\cdot \frac{N}{n+1-N}} + \mathcal{O}\left(\frac{1}{\sqrt{B}}\right).\]
 The following proposition extends this result to $m$-stability for a general $m$. For simplicity, we only consider out-of-sample stability in this result; similar techniques can also establish slightly weaker bounds for $\beta_{m,n}^{\textnormal{in}}$.
 \begin{proposition}\label{prop:bagging}
    For any distribution $P$ on $\mathcal{X}\times\mathbb{R}$, any $m\geq 1$, any $B\geq 1$, and any $1\leq N<n$, if the base algorithm $\mathcal{A}$ returns outputs lying in $[0,1]$, then its bagged version $\mathcal{A}_{\textnormal{bag}}$ satisfies
    \[\beta_{m,n}^{\textnormal{out}}(\mathcal{A}_{\textnormal{bag}},P)\leq  \sqrt{\frac{m}{4n}\cdot \frac{N}{n+1-N}} + \frac{1}{\sqrt{B}}.\]
 \end{proposition}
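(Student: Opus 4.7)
The plan is to reduce to the idealized (infinite-$B$) bagged predictor and then exploit a reverse-martingale structure in the sample size. Let $\tilde\mu_k(x) = \binom{k}{N}^{-1}\sum_{|I|=N,\,I\subseteq[k]} h(Z_I;x)$, where $h(Z_I;x) = \mathcal{A}(\{Z_i\}_{i\in I})(x)$. Since the finite-$B$ bagged predictor $\widehat\mu_k$ is a Monte Carlo average of $B$ quantities bounded in $[0,1]$, conditional variance of $1/(4B)$ plus Jensen gives $\mathbb{E}[|\widehat\mu_k(X) - \tilde\mu_k(X)|] \le 1/(2\sqrt{B})$, and applying the triangle inequality at $k=n$ and $k=n+m$ yields the $1/\sqrt{B}$ term. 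It remains to prove $\mathbb{E}[|\tilde\mu_n(X) - \tilde\mu_{n+m}(X)|] \le \sqrt{m/(4n)\cdot N/(n+1-N)}$.

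Couple $\tilde\mu_n$ and $\tilde\mu_{n+m}$ on a common space with $Z_1,\ldots,Z_{n+m}\stackrel{\text{iid}}{\sim}P$, write $G_k = \{Z_1,\ldots,Z_k\}$ for the multiset, and set $\mathcal{U}_k = \sigma(G_k, Z_{k+1},\ldots,Z_{n+m})$, which is a \emph{decreasing} filtration in $k$. By symmetry of $\mathcal{A}$, $\tilde\mu_k(x)$ is $G_k$-measurable, hence $\mathcal{U}_k$-measurable. A short exchangeability calculation---averaging $\tilde\mu_k$ over the $k+1$ equally likely ways to designate which point of $G_{k+1}$ is the ``extra'' one---yields $\mathbb{E}[\tilde\mu_k(x)\mid \mathcal{U}_{k+1}] = \tilde\mu_{k+1}(x)$, so $(\tilde\mu_k(x))_{k\ge N}$ is a reverse martingale with orthogonal increments. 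Pointwise in $x$,
\[\mathbb{E}[(\tilde\mu_n(x) - \tilde\mu_{n+m}(x))^2] = \sum_{k=n}^{n+m-1} \mathbb{E}[(\tilde\mu_{k+1}(x) - \tilde\mu_k(x))^2],\]
and the identity survives taking expectation over the independent test point $X$. Invoking the sharp single-step $L^2$ bound underlying \citet[Corollary 15]{soloff2023bagging}, namely $\mathbb{E}[(\tilde\mu_{k+1}(X) - \tilde\mu_k(X))^2] \le \frac{1}{4k}\cdot\frac{N}{k+1-N}$, summing over $k\in\{n,\ldots,n+m-1\}$ and using monotonicity in $k$ (valid since $N\le n$), the sum is at most $\frac{m}{4n}\cdot\frac{N}{n+1-N}$. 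Cauchy--Schwarz then converts this $L^2$ bound into the desired $L^1$ bound.

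The main obstacle is the sharpness of the per-step $L^2$ bound. A direct computation gives $\tilde\mu_{k+1}(x) - \tilde\mu_k(x) = \frac{N}{k+1}\bigl(\tilde\mu'_k(x) - \tilde\mu_k(x)\bigr)$, where $\tilde\mu'_k$ averages $h$ over the $N$-subsets of $[k+1]$ that contain the newly added point $Z_{k+1}$; bounding $|\tilde\mu'_k - \tilde\mu_k|$ by its range yields only $N^2/(k+1)^2$, which is too loose by a factor of order $N$ and would produce a final bound scaling as $\sqrt{m}\,N/n$ rather than $\sqrt{mN}/n$. Obtaining the sharper $N/(k(k+1-N))$ scaling requires exploiting the strong correlation between $\tilde\mu'_k$ and $\tilde\mu_k$ as U-statistics on overlapping subsamples, which is exactly the exchangeability-based variance bound of \citet{soloff2023bagging} that we invoke as a black box; the rest of the argument above is tightly coupled to it through the reverse-martingale decomposition.
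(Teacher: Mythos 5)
Your proof is correct and takes essentially the same route as the paper's: reduce to the derandomized (infinite-$B$) bagged predictor at a cost of $1/(2\sqrt{B})$ per model, telescope in the sample size with orthogonal increments so the squared distance is the sum of per-step squared increments, invoke the known per-step $L^2$ stability bound for derandomized bagging from Soloff et al.\ as a black box, sum using monotonicity in $k$, and convert to $L^1$ via Jensen/Cauchy--Schwarz. The only cosmetic difference is that you justify the vanishing cross terms through the reverse-martingale (U-statistic) structure with respect to the exchangeable filtration, whereas the paper carries out the equivalent exchangeability computation explicitly by averaging over which data point is dropped.
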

If we take $\frac{N}{n-N}=\mathcal{O}(1)$ as is typically the case in practice (e.g., by choosing $N=n/2$), we therefore see that $\beta_{m,n}^{\textnormal{out}}(\mathcal{A}_{\textnormal{bag}},P) = \mathcal{O}(\sqrt{\frac{m}{n}} + \frac{1}{\sqrt{B}})$, meaning that we may have $\beta_{m,n}^{\textnormal{out}}(\mathcal{A}_{\textnormal{bag}},P) \ll m \cdot \beta_{m,n}^{\textnormal{out}}(\mathcal{A}_{\textnormal{bag}},P)$, particularly if $B$ is small---in other words, the upper bound of Lemma~\ref{lem:m_vs_1} may be loose. This is supported by our simulation results in Section~\ref{sec:1-vs-m-sim} for the random forest algorithm, which is a specific instance of bagging---namely, by taking the base algorithm $\mathcal{A}$ to be a regression tree---although in that case, resampled data sets are constructed by sampling with replacement rather than without replacement.

\section{Proofs of main results}\label{sec:proofs}
We now turn to the proofs of our main results, Theorems~\ref{thm:jack_main},~\ref{thm:jack_main_uninflated}, and~\ref{thm:CP_main}, which establish finite-sample training-conditional coverage guarantees for the jackknife+  (with and without inflation) and for full conformal, respectively.  All other proofs are in the Appendix.

\subsection{Proof of Theorem~\ref{thm:jack_main} (jackknife+)}
First, consider a data set $(X_1,Y_1),\dots,(X_{n+m},Y_{n+m})$ consisting of $n+m$ data points drawn i.i.d.\ from $P$. The idea is that data points $i=1,\dots,n$ are the training data, as before, while data points $i=n+1,\dots,n+m$ act as test points.

\subsubsection{Step 1: training-conditional coverage versus coverage on the test set}
First we will relate the training-conditional miscoverage  $\alpha_P^{\textnormal{$\gamma$-J+}}(\mathcal{D}_n)$ to miscoverage for the test set. Let
\begin{multline*}\widehat{C}^{\textnormal{$\gamma$-J+}}_n(x) = \Big[-\textnormal{Quantile}\big(\{-\widehat{\mu}_{-i}(x) + R_i \}_{i\in[n]};(1-\alpha)(1+1/n)\big) -\gamma, \\\textnormal{Quantile}\big(\{\widehat{\mu}_{-i}(x) + R_i \}_{i\in[n]};(1-\alpha)(1+1/n)\big)+ \gamma\Big]
\end{multline*}
be the $\gamma$-inflated jackknife+ prediction interval, trained on $\{(X_j,Y_j)\}_{j\in[n]}$, at a test point $x$. Then $\alpha_P^{\textnormal{$\gamma$-J+}}(\mathcal{D}_n)$ is the miscoverage rate for this interval, i.e.,
\[ \alpha_P^{\textnormal{$\gamma$-J+}}(\mathcal{D}_n) = \mathbb{P}\{Y\not\in \widehat{C}^{\textnormal{$\gamma$-J+}}_n(X)\mid \mathcal{D}_n\},\]
with probability taken for a new data point $(X,Y)\sim P$. Since the data points are i.i.d., then conditional on $\mathcal{D}_n$, the random variable
\[S_{\textnormal{test}} = \sum_{i=n+1}^{n+m}\mathbf{1}\{Y_i\not\in \widehat{C}^{\textnormal{$\gamma$-J+}}_n(X_i)\} \]
is Binomial, with $m$ draws, each with probability $\alpha_P^{\textnormal{$\gamma$-J+}}(\mathcal{D}_n)$. 
Therefore by Hoeffding's inequality, for any $\delta \in (0,1)$,
\[\mathbb{P}\left\{S_{\textnormal{test}} \leq m\left(\alpha_P^{\textnormal{$\gamma$-J+}}(\mathcal{D}_n) - \sqrt{\frac{\log(1/\delta)}{2m}}\right) \ \middle| \  \mathcal{D}_{n} \right\} \leq \delta.\]
Marginalizing over $\mathcal{D}_{n}$, we can then equivalently write
\[\mathbb{P}\left\{\alpha_P^{\textnormal{$\gamma$-J+}}(\mathcal{D}_n) < \frac{S_{\textnormal{test}}}{m} + \sqrt{\frac{\log(1/\delta)}{2m}}\right\} \geq 1-\delta.\]
From this point on, it suffices to verify that $S_{\textnormal{test}}$ is not much larger than $\alpha m$, with high probability.

\subsubsection{Step 2: define an oracle}
The jackknife+ interval is built using the leave-one-out models,
\[\widehat\mu_{-i} = \mathcal{A}\left(\{(X_j,Y_j)\}_{j\in[n]\backslash\{i\}}\right).\] 
For this part of the proof, we will define a new collection of models, fitted via leave-one-out regression on a larger data set:
\[\widetilde{\mu}_{-i} = \mathcal{A}\left(\{(X_j,Y_j)\}_{j\in[n+m]\backslash\{i\}}\right).\]
(The $m$-stability assumption on $\mathcal{A}$ ensures that we should have $\widehat\mu_{-i} \approx \widetilde{\mu}_{-i}$, for each $i\in[n]$; we will use this property shortly.)
Define the leave-one-out residuals for these larger models,
\[\widetilde{R}_i = |Y_i - \widetilde{\mu}_{-i}(X_i)|, \ i\in[n+m].\]

Fix $\alpha'\in[0,1]$ (to be defined below) and let
\[Q = \textnormal{Quantile}\left(\{\widetilde{R}_i\}_{i\in[n+m]};1-\alpha'\right).\]
Define
\[A = \sum_{i\in[n+m]}\mathbf{1}\left\{\widetilde{R}_i > Q\right\}, \quad B = \sum_{i\in[n+m]}\mathbf{1}\left\{\widetilde{R}_i \geq Q\right\}.\]
Then it must hold surely that
$A\leq \alpha'(n+m) \leq B$,
by definition of the quantile. Define also
\[A_{\textnormal{test}} = \sum_{i=n+1}^{n+m}\mathbf{1}\left\{\widetilde{R}_i > Q\right\}, \quad B_{\textnormal{train}} = \sum_{j\in[n]}\mathbf{1}\left\{\widetilde{R}_j \geq Q\right\}.\]
By exchangeability of the data and symmetry of the algorithm $\mathcal{A}$, the residuals $\widetilde{R}_1,\dots,\widetilde{R}_{n+m}$ are exchangeable. In other words, the test residuals $\widetilde{R}_{n+1},\dots,\widetilde{R}_{n+m}$ can be viewed as a sample drawn uniformly without replacement from the full list of $n+m$ residuals, and same for the training residuals. More formally, we have
$A_{\textnormal{test}}\mid A\sim\textnormal{HyperGeometric}(n+m,m,A)$ and $B_{\textnormal{train}}\mid B\sim\textnormal{HyperGeometric}(n+m,n,B)$.
By Hoeffding's inequality for sampling without replacement, then, for any $\delta \in (0,1)$,
\[\mathbb{P}\left\{A_{\textnormal{test}} < A\cdot \frac{m}{n+m}+ \sqrt{\frac{m}{2}\log(1/\delta)}\right\} \geq 1- \delta\]
and
\[\mathbb{P}\left\{B_{\textnormal{train}} > B\cdot \frac{n}{n+m} - \sqrt{\frac{n}{2}\log(1/\delta)}\right\} \geq 1- \delta.\]
Now suppose that both of these events hold. Then first, by our bound on $B_{\textnormal{train}}$, we have
\[\sum_{j\in[n]} \mathbf{1}\left\{\widetilde{R}_j \geq Q\right\} = B_{\textnormal{train}} > B \cdot\frac{n}{n+m} - \sqrt{\frac{n}{2}\log(1/\delta)} \geq \alpha' n  - \sqrt{\frac{n}{2}\log(1/\delta)} = \alpha''n,\]
where $\alpha'' = \alpha' - \sqrt{\log(1/\delta)/2n}$. In particular, this implies that, on the above events,
\[\textnormal{Quantile}\left(\{\widetilde{R}_j\}_{j\in[n]}; 1-\alpha''\right) \geq Q.\]
Next, defining
\[\widetilde{S}_{\textnormal{test}} = \sum_{i=n+1}^{n+m}\mathbf{1}\left\{\widetilde{R}_i > \textnormal{Quantile}\left(\{\widetilde{R}_j\}_{j\in[n]}; 1-\alpha''\right) \right\},\]
we therefore have
\[\widetilde{S}_{\textnormal{test}} \leq \sum_{i=n+1}^{n+m}\mathbf{1}\left\{\widetilde{R}_i > Q\right\} = A_{\textnormal{test}} < A \cdot \frac{m}{n+m} + \sqrt{\frac{m}{2}\log(1/\delta)}  \leq \alpha' m + \sqrt{\frac{m}{2}\log(1/\delta)} = \alpha''' m,\]
where $\alpha''' = \alpha' + \sqrt{\frac{\log(1/\delta)}{2m}}$ (again assuming the above events hold). Hence,
\[ \mathbb{P} \left\{\widetilde{S}_{\textnormal{test}} < \alpha''' m \right\} \ge 1-2\delta. \]

\subsubsection{Step 3: relating the jackknife+ to the oracle}
For a test point $i\in\{n+1,\dots,n+m\}$ and a training point $j\in[n]$, define 
\[R_{i;j} = |Y_i - \widehat\mu_{-j}(X_i)|.\]
Then, for each test point $i\in\{n+1,\dots,n+m\}$, we can verify that
\begin{align*}
    Y_i\not\in \widehat{C}_n^{\textnormal{$\gamma$-J+}}(X_i) & \Longrightarrow \ \sum_{j\in[n]}\mathbf{1}\{R_{i;j} > \gamma+R_j\}\geq (1-\alpha)(n+1) \\
    & \Longleftrightarrow \ \textnormal{Quantile}\big(\{\gamma+R_j - R_{i;j}\}_{j\in[n]}; (1-\alpha)(1+1/n)\big) < 0 \\ 
    & \ \Longrightarrow  \ \textnormal{Quantile}\big(\{\gamma+R_j - R_{i;j}\}_{j\in[n]}; 1-\alpha \big) < 0.
\end{align*}
Therefore,
\[S_{\textnormal{test}} = \sum_{i=n+1}^{n+m}\mathbf{1}\{Y_i\not\in \widehat{C}^{\textnormal{$\gamma$-J+}}_n(X_i)\} \leq \sum_{i=n+1}^{n+m}\mathbf{1}\left\{\textnormal{Quantile}\left( \{\gamma+ R_j - R_{i;j}\}_{j\in[n]};1-\alpha\right) < 0 \right\}.\]
And, recalling our definition of $\widetilde{S}_{\textnormal{test}}$, we can also write
\[\widetilde{S}_{\textnormal{test}} = \sum_{i=n+1}^{n+m}\mathbf{1}\left\{\widetilde{R}_i > \textnormal{Quantile}\left(\{\widetilde{R}_j\}_{j\in[n]};1-\alpha''\right)\right\}= \sum_{i=n+1}^{n+m}\mathbf{1}\left\{\textnormal{Quantile}\left( \{\widetilde{R}_j - \widetilde{R}_i\}_{j\in[n]};1-\alpha''\right) < 0 \right\}.\]
Our next task is to show that $S_{\textnormal{test}}$ is approximately bounded by $ \widetilde{S}_{\textnormal{test}}$, with high probability. We will show this by examining the differences in the terms that define $S_{\textnormal{test}}$ versus $\widetilde{S}_{\textnormal{test}}$. Fix any $i$, and suppose that
\[\mathbf{1}\left\{\textnormal{Quantile}\left( \{\gamma+ R_j - R_{i;j}\}_{j\in[n]};1-\alpha\right) < 0 \right\} = 1,\]
and
\[ \quad \mathbf{1}\left\{\textnormal{Quantile}\left( \{\widetilde{R}_j - \widetilde{R}_i\}_{j\in[n]};1-\alpha''\right) < 0 \right\} = 0.\]
For this to occur, we must have
\[\sum_{j\in[n]}\mathbf{1}\{\gamma + R_j - R_{i;j} < 0 \} \geq (1-\alpha)n, \quad \sum_{j\in[n]}\mathbf{1}\{\widetilde{R}_j - \widetilde{R}_i < 0 \} < (1-\alpha'')n.\]
In particular this implies
\begin{equation}\label{eqn:OG_difference}
    \sum_{j\in[n]} \mathbf{1}\left\{ \gamma + R_j - R_{i;j} < 0, \widetilde{R}_j - \widetilde{R}_i\geq 0 \right\} \geq (\alpha''-\alpha)n,
\end{equation}
which we can relax to
\begin{equation}\label{eqn:relax_difference}
    \sum_{j\in[n]} \mathbf{1}\left\{   (R_{i;j} - \widetilde{R}_i) - (R_j - \widetilde{R}_j) > \gamma\right\} \geq (\alpha''-\alpha)n.
\end{equation}
Therefore,
\[S_{\textnormal{test}}\leq \widetilde{S}_{\textnormal{test}} + \sum_{i=n+1}^{n+m} \mathbf{1}\left\{ \sum_{j\in[n]} \mathbf{1}\left\{   (R_{i;j} - \widetilde{R}_i) - (R_j - \widetilde{R}_j) > \gamma\right\} \geq (\alpha''-\alpha)n\right\}.\]

Now we apply $m$-stability.
By definition, 
for all $j\in[n]$ and $i\in\{n+1,\dots,n+m\}$, we have
\[\mathbb{E}\left[ | R_{i;j} - \widetilde{R}_i |\right] \leq 
\mathbb{E}\left[ | \widehat\mu_{-j}(X_i) - \widetilde{\mu}_{-i}(X_i)|\right] = \beta_{m,n-1}^{\textnormal{out}}(\mathcal{A},P),\]
since $\widehat\mu_{-j}$ is trained on $[n]\backslash\{j\}$ while $\widetilde{\mu}_{-i}$ is trained on $[n+m]\backslash\{i\}$ (i.e., the same training set, except with $m$ additional points), and similarly
\[\mathbb{E}\left[ | R_j - \widetilde{R}_j |\right] \leq \mathbb{E}\left[ | \widehat\mu_{-j}(X_j) - \widetilde{\mu}_{-j}(X_j) |\right] = \beta_{m,n-1}^{\textnormal{out}}(\mathcal{A},P),\]
since $\widehat\mu_{-j}$ is trained on $[n]\backslash\{j\}$ while $\widetilde{\mu}_{-j}$ is trained on $[n+m]\backslash\{j\}$. Therefore by Markov's inequality,
\begin{multline}\label{eqn:jack_key_step}
\mathbb{E}\left[\sum_{j\in[n]} \mathbf{1}\left\{   (R_{i;j} - \widetilde{R}_i) - (R_j - \widetilde{R}_j) > \gamma\right\} \right]
= \sum_{j\in[n]} \mathbb{P}\left\{   (R_{i;j} - \widetilde{R}_i) - (R_j - \widetilde{R}_j) > \gamma\right\}\\
\leq \sum_{j\in [n]}\frac{\mathbb{E}\left[  \left| (R_{i;j} - \widetilde{R}_i) - (R_j - \widetilde{R}_j)\right|\right]}{\gamma}\leq 
 n \cdot\frac{2\beta_{m,n-1}^{\textnormal{out}}(\mathcal{A},P)}{\gamma},
\end{multline}
and so applying Markov's inequality again,
\[\mathbb{P}\left\{\sum_{j\in[n]} \mathbf{1}\left\{   (R_{i;j} - \widetilde{R}_i) - (R_j - \widetilde{R}_j) > \gamma\right\} 
 \geq (\alpha'' - \alpha)n\right\} \leq \frac{2\beta_{m,n-1}^{\textnormal{out}}(\mathcal{A},P)}{\gamma(\alpha''-\alpha)},\] 
 as long as $\alpha'' > \alpha$.
Applying Markov's inequality one last time, for any $\delta' \in (0,1)$, with probability at least $1-\delta'$,
\[\sum_{i=n+1}^{n+m} \mathbf{1}\left\{ \sum_{j\in[n]} \mathbf{1}\left\{   (R_{i;j} - \widetilde{R}_i) - (R_j - \widetilde{R}_j) > \gamma\right\} \geq (\alpha''-\alpha)n\right\} \leq m \cdot \frac{2\beta_{m,n-1}^{\textnormal{out}}(\mathcal{A},P)}{\gamma(\alpha''-\alpha)}\cdot\frac{1}{\delta'}.\]
Combining the steps above, then, with probability at least $1-\delta'$,
\[S_{\textnormal{test}} \leq \widetilde{S}_{\textnormal{test}} +  m \cdot \frac{2\beta_{m,n-1}^{\textnormal{out}}(\mathcal{A},P)}{\gamma(\alpha''-\alpha)}\cdot\frac{1}{\delta'}.\] 

\subsubsection{Combining everything}
Combining all our steps so far, we have
\[\alpha_P^{\textnormal{$\gamma$-J+}}(\mathcal{D}_n) < \frac{S_{\textnormal{test}}}{m} + \sqrt{\frac{\log(1/\delta)}{2m}}, \ \widetilde{S}_{\textnormal{test}} < \alpha'''m , \ S_{\textnormal{test}} \leq \widetilde{S}_{\textnormal{test}} +  m \cdot \frac{2\beta_{m,n-1}^{\textnormal{out}}(\mathcal{A},P)}{\gamma(\alpha''-\alpha)}\cdot\frac{1}{\delta'},\]
with probability at least $1 - 3\delta - \delta'$.
This simplifies to
\[\alpha_P^{\textnormal{$\gamma$-J+}}(\mathcal{D}_n) < \alpha''' + \frac{2\beta_{m,n-1}^{\textnormal{out}}(\mathcal{A},P)}{\gamma(\alpha''-\alpha)}\cdot\frac{1}{\delta'} + \sqrt{\frac{\log(1/\delta)}{2m}}.\]
Recalling that
\[\alpha'' = \alpha' - \sqrt{\frac{\log(1/\delta)}{2n}}, \alpha''' = \alpha' + \sqrt{\frac{\log(1/\delta)}{2m}},\]
and choosing\footnote{Here we can assume $\alpha'\in[0,1]$ (as was required in Step 2 of the proof) since, if $\alpha'>1$, the the result of the theorem holds trivially.}
\[\alpha' = \alpha + \sqrt{\frac{\log(1/\delta)}{2n}} + \sqrt[3]{\frac{2\beta_{m,n-1}^{\textnormal{out}}(\mathcal{A},P)}{\gamma}}, \quad \delta' = \sqrt[3]{\frac{2\beta_{m,n-1}^{\textnormal{out}}(\mathcal{A},P)}{\gamma}},\]
we therefore have
\[\alpha_P^{\textnormal{$\gamma$-J+}}(\mathcal{D}_n) < \alpha + 3\sqrt{\frac{\log(1/\delta)}{2\min\{n,m\}}} + 2\sqrt[3]{\frac{2\beta_{m,n-1}^{\textnormal{out}}(\mathcal{A},P)}{\gamma}}\]
with probability at least $1 - 3\delta - \sqrt[3]{\frac{2\beta_{m,n-1}^{\textnormal{out}}(\mathcal{A},P)}{\gamma}}$, which completes the proof.

\subsection{Proof of Theorem~\ref{thm:jack_main_uninflated} (jackknife+ without inflation)}
This result for jackknife+ without inflation holds as an immediate corollary of Theorem~\ref{thm:jack_main}, which establishes training-conditional coverage for the inflated jackknife+ interval. 

To see why, first fix any $\gamma>0$.
Since $\widehat{C}^{\textnormal{J+}}_n(X)\subseteq \widehat{C}^{\textnormal{$\gamma$-J+}}_n(X)$, we have
\begin{multline*} \alpha_P^{\textnormal{J+}}(\mathcal{D}_n) = \mathbb{P}\{Y\not\in \widehat{C}^{\textnormal{J+}}_n(X)\mid \mathcal{D}_n\} = \mathbb{P}\{Y\not\in \widehat{C}^{\textnormal{$\gamma$-J+}}_n(X)\mid \mathcal{D}_n\}
+ \mathbb{P}\{Y\in \widehat{C}^{\textnormal{$\gamma$-J+}}_n(X)\backslash \widehat{C}^{\textnormal{J+}}_n(X)\mid \mathcal{D}_n\} \\= \alpha_P^{\textnormal{$\gamma$-J+}}(\mathcal{D}_n) 
 + \mathbb{P}\{Y\in \widehat{C}^{\textnormal{$\gamma$-J+}}_n(X)\backslash \widehat{C}^{\textnormal{J+}}_n(X)\mid \mathcal{D}_n\},\end{multline*}
 where probabilities are taken with respect to a new data point $(X,Y)\sim P$ sampled independently of the data set $\mathcal{D}_n$. Since we have defined $f_{Y|X}$ as the density of the conditional distribution of $Y\mid X$, we have
\[\mathbb{P}\{Y\in \widehat{C}^{\textnormal{$\gamma$-J+}}_n(X)\backslash \widehat{C}^{\textnormal{J+}}_n(X)\mid \mathcal{D}_n\} = \mathbb{E}\left[\int_{y\in \widehat{C}^{\textnormal{$\gamma$-J+}}_n(X)\backslash \widehat{C}^{\textnormal{J+}}_n(X)} f_{Y|X}(y\mid X) \;\mathsf{d}y \, \middle| \, \mathcal{D}_n\right],\]
where expected value is taken with respect to the marginal distribution of $X$. We can then bound this as
\begin{multline*}\mathbb{E}\left[\int_{y\in \widehat{C}^{\textnormal{$\gamma$-J+}}_n(X)\backslash \widehat{C}^{\textnormal{J+}}_n(X)} f_{Y|X}(y\mid X) \;\mathsf{d}y \, \middle| \, \mathcal{D}_n\right]
\leq \mathbb{E}\left[ \textnormal{Leb}\left(\widehat{C}^{\textnormal{$\gamma$-J+}}_n(X)\backslash \widehat{C}^{\textnormal{J+}}_n(X)\right) \cdot\sup_{y\in\mathbb{R}} f_{Y|X}(y\mid X)\,\middle|\,\mathcal{D}_n\right]\\ = 2\gamma\mathbb{E}\left[ \sup_{y\in\mathbb{R}} f_{Y|X}(y\mid X)\,\middle|\,\mathcal{D}_n\right] = 2\gamma\cdot B_{\textnormal{dens}},  \end{multline*}
by definition of $B_{\textnormal{dens}}$, and using the fact that the set difference $\widehat{C}^{\textnormal{$\gamma$-J+}}_n(X)\backslash \widehat{C}^{\textnormal{J+}}_n(X)$ must have Lebesgue measure $2\gamma$, by construction.
Therefore, the inequality
\[ \alpha_P^{\textnormal{J+}}(\mathcal{D}_n)  \leq  \alpha_P^{\textnormal{$\gamma$-J+}}(\mathcal{D}_n) + 2\gamma\cdot B_{\textnormal{dens}}\]
holds almost surely.

Applying the result of Theorem~\ref{thm:jack_main}, we then have 
\[\mathbb{P}\left\{ \alpha_P^{\textnormal{J+}}(\mathcal{D}_n) < \alpha + 3\sqrt{\frac{\log(1/\delta)}{2\min\{n,m\}}} + 2\sqrt[3]{\frac{2\beta_{m,n-1}^{\textnormal{out}}(\mathcal{A},P)}{\gamma}}  + 2\gamma\cdot B_{\textnormal{dens}}\right\}  \geq 1 - 3\delta - \sqrt[3]{\frac{2\beta_{m,n-1}^{\textnormal{out}}(\mathcal{A},P)}{\gamma}} ,\]
for any $\gamma>0$. Choosing $\gamma = \sqrt[4]{\frac{2\beta_{m,n-1}^{\textnormal{out}}(\mathcal{A},P)}{B_{\textnormal{dens}}^3}}$ completes the proof.

\subsection{Proof of Theorem~\ref{thm:CP_main} (full conformal)}
This proof is very similar to the proof of Theorem~\ref{thm:jack_main} for the jackknife+. We again consider a data set $(X_1,Y_1),\dots,(X_{n+m},Y_{n+m})$ drawn i.i.d.\ from $P$.

\subsubsection{Step 1: training-conditional coverage versus coverage on the test set}
Define
\[\widehat{C}^{\textnormal{$\gamma$-CP}}_n(x) = \left\{y \in\mathbb{R} : |y - \widehat{\mu}^y(x)| \leq \textnormal{Quantile}\big(\{|Y_i - \widehat{\mu}^y(X_i)|\}_{i\in[n]}; (1-\alpha)(1+1/n)\big) + \gamma\right\},\]
the $\gamma$-inflated full conformal prediction set trained on $\{(X_j,Y_j)\}_{j\in[n]}$ and evaluated at a test point $x$, and define
\[S_{\textnormal{test}} = \sum_{i=n+1}^{n+m}\mathbf{1}\{Y_i\not\in \widehat{C}^{\textnormal{$\gamma$-CP}}_n(X_i)\}.\]
Then, exactly as in Step 1 of the proof of Theorem~\ref{thm:jack_main}, we have
\[\mathbb{P}\left\{\alpha_P^{\textnormal{$\gamma$-CP}}(\mathcal{D}_n) < \frac{S_{\textnormal{test}}}{m} + \sqrt{\frac{\log(1/\delta)}{2m}}\right\} \geq 1-\delta.\]

\subsubsection{Step 2: define an oracle}
We now define the model fitted on the full data set,
\[\widetilde{\mu}=\mathcal{A}\left(\{(X_j,Y_j)\}_{j\in[n+m]}\right),\]
and its residuals
\[\widetilde{R}_j = |Y_j - \widetilde{\mu}(X_j)|, \ j\in[n+m].\]
By exchangeability of the data together with symmetry of $\mathcal{A}$, the residuals $\widetilde{R}_1,\dots,\widetilde{R}_{n+m}$ are exchangeable.
Define $\alpha'' = \alpha' - \sqrt{\log(1/\delta)/2n}$ and $\alpha''' = \alpha' +\sqrt{\log(1/\delta)/2m}$, and
\[\widetilde{S}_{\textnormal{test}} = \sum_{i=n+1}^{n+m}\mathbf{1}\left\{\widetilde{R}_i > \textnormal{Quantile}\left(\{\widetilde{R}_j\}_{j\in[n]}; 1-\alpha''\right) \right\}.\]
Following identical arguments as in Step 2 in the proof of Theorem~\ref{thm:jack_main}, we have
\[\widetilde{S}_{\textnormal{test}} < \alpha''' m,\]
with probability at least $1-2\delta$.

\subsubsection{Step 3: relating the full conformal interval to the oracle}
For each test point $i\in\{n+1,\dots,n+m\}$, define
\[\widehat\mu_{+i} = \mathcal{A}\left(\{(X_j,Y_j)\}_{j\in[n]\cup \{i\}}\right)\]
as the fitted model trained on the training data $j\in[n]$ along with test point $i$. Define also
\[R_{j;i} = |Y_j - \widehat\mu_{+i}(X_j)|, \ j\in[n],\]
and
\[R_i = |Y_i - \widehat\mu_{+i}(X_i)|.\]
Then by construction of the inflated full conformal interval, we can see that, for test point $i\in\{n+1,\dots,n+m\}$,
\begin{align*}Y_i\not\in \widehat{C}_n^{\textnormal{$\gamma$-CP}}(X_i) & \Longleftrightarrow \ R_i > \textnormal{Quantile}\left(\{R_{j;i}\}_{j\in[n]};(1-\alpha)(1+1/n)\right) + \gamma\\
& \Longleftrightarrow
\textnormal{Quantile}\left(\{\gamma + R_{j;i} - R_i\}_{j\in[n]};(1-\alpha)(1+1/n)\right)<0\\
& \Longrightarrow
\textnormal{Quantile}\left(\{\gamma + R_{j;i} - R_i\}_{j\in[n]};1-\alpha\right)<0.\end{align*}
Therefore,
\[S_{\textnormal{test}} = \sum_{i=n+1}^{n+m}\mathbf{1}\{Y_i\not\in \widehat{C}^{\textnormal{$\gamma$-CP}}_n(X_i)\} \leq \sum_{i=n+1}^{n+m}\mathbf{1}\left\{\textnormal{Quantile}\left( \{\gamma+ R_{j;i} - R_i\}_{j\in[n]};1-\alpha\right) < 0 \right\}.\]
And, recalling our definition of $\widetilde{S}_{\textnormal{test}}$, we can also write
\[\widetilde{S}_{\textnormal{test}} = \sum_{i=n+1}^{n+m}\mathbf{1}\left\{\widetilde{R}_i > \textnormal{Quantile}\left(\{\widetilde{R}_j\}_{j\in[n]};1-\alpha''\right)\right\} = \sum_{i=n+1}^{n+m}\mathbf{1}\left\{\textnormal{Quantile}\left( \{\widetilde{R}_j - \widetilde{R}_i\}_{j\in[n]};1-\alpha''\right) < 0 \right\}.\]
Now we apply $m$-stability.
By definition, 
for all $j\in[n]$ and $i\in\{n+1,\dots,n+m\}$, we have
\begin{equation}\label{eqn:CP_key_step_a}\mathbb{E}\left[ | R_{j;i} - \widetilde{R}_j |\right] \leq 
\mathbb{E}\left[ | \widehat\mu_{+i}(X_j) - \widetilde{\mu}(X_j)|\right] = \beta^{\textnormal{in}}_{m-1,n+1}(\mathcal{A},P),\end{equation}
since $\widehat\mu_{+i}$ is trained on $[n]\cup\{i\}$ while $\widetilde{\mu}$ is trained on $[n+m]$ (i.e., the same training set, except with $m-1$ additional points), and similarly
\begin{equation}\label{eqn:CP_key_step_b}\mathbb{E}\left[ | R_i - \widetilde{R}_i |\right] \leq \mathbb{E}\left[ | \widehat\mu_{+i}(X_i) - \widetilde{\mu}(X_i) |\right] = \beta^{\textnormal{in}}_{m-1,n+1}(\mathcal{A},P).\end{equation}
Hence, as for jackknife+, Markov's inequality implies
\begin{equation}\label{eqn:CP_key_step}\mathbb{E}\left[\sum_{j\in[n]} \mathbf{1}\left\{   (\widetilde{R}_j - R_{j;i}) + (R_i - \widetilde{R}_i) > \gamma\right\} \right] \leq n \cdot \frac{2\beta_{m-1,n+1}^{\textnormal{in}}(\mathcal{A},P)}{\gamma}.\end{equation}
From this point on, following identical arguments as in Step 3 of the proof of Theorem~\ref{thm:jack_main}, we obtain 
\[S_{\textnormal{test}} \leq \widetilde{S}_{\textnormal{test}} +  m \cdot \frac{2\beta_{m-1,n+1}^{\textnormal{in}}(\mathcal{A},P)}{\gamma(\alpha''-\alpha)}\cdot\frac{1}{\delta'}\]
with probability at least $1-\delta'$.

\subsubsection{Combining everything}
Following the same steps, and analogous choices of $\alpha',\delta'$, as in the proof of Theorem~\ref{thm:jack_main}, we obtain
\[\alpha_P^{\textnormal{$\gamma$-CP}}(\mathcal{D}_n) < \alpha + 3\sqrt{\frac{\log(1/\delta)}{2\min\{n,m\}}} + 2\sqrt[3]{\frac{2\beta_{m-1,n+1}^{\textnormal{in}}(\mathcal{A},P)}{\gamma}}\]
with probability at least $1 - 3\delta - \sqrt[3]{\frac{2\beta_{m-1,n+1}^{\textnormal{in}}(\mathcal{A},P)}{\gamma}}$, which completes the proof.

\section{Discussion}\label{sec:discussion}

Algorithmic stability is a key ingredient in many statistical frameworks, and in particular, has played an important role in distribution-free predictive inference, allowing for stronger guarantees for methods such as jackknife and jackknife+. In this paper, we have seen that a version of the stability assumption---specifically, stability with respect to adding/removing (or swapping) $m$ training points relative to a data set of size $n$---is sufficient to ensure training-conditional coverage for predictive intervals constructed by the full conformal method or by jackknife+, thus establishing an assumption-lean setting in which these popular methods offer the important training-conditional coverage property.

The results of this work suggest a number of potential directions for further study. First, one remaining question is the problem of understanding the $m$-stability properties of commonly used algorithms: while our results show that $m$-stability can be (coarsely) bounded with existing stability definitions, our example calculations and empirical results show a more optimistic picture, suggesting that  algorithms used in practice may have favorable $m$-stability properties that are not yet well understood.  Second, as mentioned in Section~\ref{sec:background}, in this paper we have studied only one specific version of the split and full conformal algorithms: namely, prediction intervals (or sets) constructed via the residual, $|y - \hat\mu(x)|$, where $\hat\mu$ represents a fitted regression model. The conformal prediction framework is able to use a much broader range of constructions (``nonconformity scores'' \citep{vovk2005algorithmic}), to adapt to different types of data distributions---for example, the Conformalized Quantile Regression (CQR) method of \citet{romano2019conformalized}. An important question is whether notions of algorithmic stability can be extended more broadly to different conformal prediction constructions, to ensure training-conditional coverage beyond the residual score.
We leave these open questions for future work.

\subsection*{Acknowledgements}
The authors thank the Associate Editor and anonymous referees for their valuable comments. The authors also thank Michael Bian for helpful feedback on an earlier draft of this manuscript. 
R.F.B.\ was partially supported by the Office of Naval Research via grant N00014-20-1-2337, and by the National Science Foundation via grant DMS-2023109.

\bibliographystyle{plainnat}
\bibliography{bib}

\appendix

\section{Proof of asymptotic results}\label{appA_asy_proof}
We will now see how Lemma~\ref{lem:m_vs_1} leads to proofs of the asymptotic results, Theorems~\ref{thm:jack_asymp} and~\ref{thm:CP_asymp}. In particular, while our finite-sample results are phrased in terms of $m$-stability, this lemma enables us to verify that, asymptotically, $1$-stability is sufficient for training-conditional coverage.

\begin{proof}[Proof of Theorems~\ref{thm:jack_asymp} and~\ref{thm:CP_asymp}]
First, we will show that the first part of Theorem~\ref{thm:jack_asymp} (i.e., for the inflated jackknife+ interval) follows as a consequence of Theorem~\ref{thm:jack_main} combined with Lemma~\ref{lem:m_vs_1}. 
 
First, define $m_n  \leq  \left\lceil  (b_{n}/\gamma_{n})^{-1/2} \right\rceil $, and note that $m_n\to\infty$ and $m_nb_n/\gamma_n\to 0$ since $b_{n}/\gamma_{n}\to 0$.
Define also
\[\epsilon_n =  \frac{3}{\sqrt[4]{4\min\{n,m_n\}}}+ 2\sqrt[3]{\frac{2 m_nb_{n}}{\gamma_n}}, \quad \delta_n = 3e^{-\sqrt{\min\{n,m_n\}}} + \sqrt[3]{\frac{2 m_nb_{n}}{\gamma_n}}.\]
Observe that $\epsilon_n,\delta_n\rightarrow 0$, since $m_n\to\infty$ and $m_nb_n/\gamma_n\to 0$. 

Next, for any algorithm $\mathcal{A}$ and any distribution $P$, if $\beta^{*,\textnormal{out}}_{1,n-1}(\mathcal{A},P)\leq b_n$, then applying Lemma~\ref{lem:m_vs_1} with $m=m_n$, we have
\[\beta_{m_n,n-1}^{\textnormal{out}}(\mathcal{A}, P)
\leq \sum_{k=n-1}^{n+m_n-2}\beta_{1,k}^{\textnormal{out}}(\mathcal{A}, P)
\leq m_nb_{n} .\]
Then applying Theorem~\ref{thm:jack_main} with $\delta = e^{-\sqrt{\min\{n,m_n\}}}$, and plugging in our choices of $m_n,\gamma_n$ and our calculations above, we have 
\[  \mathbb{P}\left\{ \alpha_{P}^{\textnormal{$\gamma_n$-J+}}(\mathcal{D}_n)\geq \alpha + \epsilon_n \right\} \leq \delta_n,\]
for all such $\mathcal{A},P$. In other words, we have shown that
\[\sup \limits_{\substack{\mathcal{A}, P\textnormal{ s.t. }\\ \beta_{1,n-1}^{*,\textnormal{out}}(\mathcal{A},P) \le b_{n}}}\mathbb{P}\left\{\alpha_{P}^{\textnormal{$\gamma_n$-J+}}(\mathcal{D}_n)\geq \alpha + \epsilon_n\right\}  \leq \delta_n.\]
This completes the proof of the first part of Theorem~\ref{thm:jack_asymp}, i.e., asymptotic coverage for the inflated jackknife+ interval.

We now proceed to verify the second claim in Theorem~\ref{thm:jack_asymp}, i.e., for the uninflated jackknife+ interval; this will follow from a very similar argument as for the first claim. 

First, define $m_n = \lceil (b_n B_n)^{-1/2}\rceil$, and note that $m_n\to\infty$ and $m_nb_nB_n\to 0$ since $b_{n}B_n\to 0$.
Define also
\[\epsilon_n =  \frac{3}{\sqrt[4]{4\min\{n,m_n\}}}+ 4\sqrt[4]{2B_n \cdot m_n b_n}, \quad \delta_n = 3e^{-\sqrt{\min\{n,m_n\}}} + \sqrt[4]{2B_n \cdot m_n b_n}.\]
Observe that $\epsilon_n,\delta_n\rightarrow 0$, since $m_n\to\infty$ and $m_nb_nB_n\to 0$. 

Next, for any algorithm $\mathcal{A}$ and any distribution $P\in\mathcal{P}_{B_n}$, if $\beta^{*,\textnormal{out}}_{1,n-1}(\mathcal{A},P)\leq b_n$, then applying Lemma~\ref{lem:m_vs_1}  we have $\beta_{m_n,n-1}^{\textnormal{out}}(\mathcal{A}, P)
\leq m_nb_{n}$, as before. Therefore, by Theorem~\ref{thm:jack_main_uninflated} with $\delta = e^{-\sqrt{\min\{n,m_n\}}}$,
\[\mathbb{P}\left\{ \alpha_{P}^{\textnormal{J+}}(\mathcal{D}_n)  \ge \alpha + \epsilon_n\right\} \le \delta_n.\]
Since this holds for all such $\mathcal{A},P$, in other words we have shown that
\[\sup_{\substack{\mathcal{A}, P\in\mathcal{P}_{B_n}\textnormal{ s.t.}\\\beta^{*,\textnormal{out}}_{1,n-1}(\mathcal{A},P)\leq b_n}}\mathbb{P}\left\{ \alpha_{P}^{\textnormal{J+}}(\mathcal{D}_n)  \ge \alpha + \epsilon_n\right\} \le \delta_n,\]
which completes the proof of Theorem~\ref{thm:jack_asymp}.

For full conformal, the proof of Theorem~\ref{thm:CP_asymp} follows via an identical argument as that for the jackknife+ with inflation, i.e., by combining the finite-sample result of Theorem~\ref{thm:CP_main}  with Lemma~\ref{lem:m_vs_1}; we omit the details.
\end{proof}

\section{Proofs of training-conditional coverage under alternative stability frameworks}\label{appB}
\subsection{Proof of Theorem~\ref{thm:jack_version_P} (jackknife+---alternative version)}
First we prove the first part of Theorem~\ref{thm:jack_version_P}, for the inflated jackknife+ interval. This proof is identical to the proof of Theorem~\ref{thm:jack_main}, which uses the expected value definition of out-of-sample stability, except for one modification. Specifically, for the step~\eqref{eqn:jack_key_step} in the proof of Theorem~\ref{thm:jack_main}, we instead have the bound
\begin{multline*}\mathbb{E}\left[\sum_{j\in[n]} \mathbf{1}\left\{   (R_{i;j} - \widetilde{R}_i) - (R_j - \widetilde{R}_j) > 2\epsilon_{m,n-1}\right\} \right] = \sum_{j\in[n]} \mathbb{P}\left\{   (R_{i;j} - \widetilde{R}_i) - (R_j - \widetilde{R}_j) > 2\epsilon_{m,n-1}\right\}  \\
\leq \sum_{j\in[n]} \mathbb{P}\left\{   |R_{i;j} - \widetilde{R}_i|>\epsilon_{m,n-1}\right\} + \mathbb{P}\left\{ |R_j - \widetilde{R}_j| > \epsilon_{m,n-1}\right\} \leq n \cdot 2\nu_{m,n-1},\end{multline*}
by the stability assumption of Theorem~\ref{thm:jack_version_P}. The rest of the proof is identical to the proof of Theorem~\ref{thm:jack_main}.

Next we turn to the second part of the theorem, proving training-conditional coverage for the uninflated jackknife+ interval. This result will follow as a corollary of the first part of the theorem. First, we have
\[\alpha_P^{\textnormal{J+}}(\mathcal{D}_n)\leq 
\alpha_P^{\textnormal{$2\epsilon_{m,n-1}$-J+}}(\mathcal{D}_n) + 4\epsilon_{m,n-1}\cdot B_{\textnormal{dens}},\]
by the same argument as in the proof of Theorem~\ref{thm:jack_main_uninflated} (i.e., with inflation term $2\epsilon_{m,n-1}$ in place of $\gamma$). Combining this inequality with the first part of the theorem completes the proof of the second claim.

\subsection{Proof of Theorem~\ref{thm:CP_version_P} (full conformal---alternative version)}
This proof is identical to the proof of Theorem~\ref{thm:CP_main}, which uses the expected value definition of in-sample stability, except for one modification. Specifically, for the step~\eqref{eqn:CP_key_step} in the proof of Theorem~\ref{thm:CP_main}, we instead have the bound
\begin{multline*}\mathbb{E}\left[\sum_{j\in[n]} \mathbf{1}\left\{ (\widetilde{R}_j - R_{j;i}) + (R_i - \widetilde{R}_i) >2\epsilon_{m-1,n+1}  \right\} \right] = \sum_{j\in[n]} \mathbb{P}\left\{  (\widetilde{R}_j - R_{j;i}) + (R_i - \widetilde{R}_i) >2\epsilon_{m-1,n+1} \right\}  \\
\leq \sum_{j\in[n]} \mathbb{P}\left\{   |R_{j;i} - \widetilde{R}_j|>\epsilon_{m-1,n+1}\right\} + \mathbb{P}\left\{ |R_i - \widetilde{R}_i| > \epsilon_{m-1,n+1}\right\}  \leq n \cdot 2\nu_{m-1,n+1},\end{multline*}
by the stability assumption of Theorem~\ref{thm:CP_version_P}. The rest of the proof is identical.

\subsection{Proof of Theorem~\ref{thm:jack_swap} (jackknife+ with swap-stability)}\label{appB_proof_jack_swap}
As mentioned in Section~\ref{sec:swap_preview}, the swap-stability is sufficient for jackknife+ to achieve training-conditional coverage because a weaker version of converse to Proposition~\ref{prop:stable_implies_swap_stable} can be established for out-of-sample stability.
To state the partial converse result, we introduce some notations. Recalling that an algorithm $\mathcal{A}$ maps \emph{a data set of any size} to a fitted function, we will write $\mathcal{A}_n$ to denote the map defined by applying $\mathcal{A}$ to a training set of size $n$. That is, $\mathcal{A}_n$ is a map with input space $(\mathcal{X}\times\mathbb{R})^n$, and $\mathcal{A}$ is simply the collection of maps $\mathcal{A}_n$ across all $n$.
\begin{proposition}\label{prop:swap_stability_implies_stability}
For any $\mathcal{A}$ and any $P$, for any $n$ there exists an algorithm $\mathcal{A}'$ with $\mathcal{A}'_n = \mathcal{A}_n$, such that
\[\beta_{m,n}^{\textnormal{out}}(\mathcal{A}',P) \leq \bar{\beta}_{m,n}^{\textnormal{out}}(\mathcal{A},P).\]
\end{proposition}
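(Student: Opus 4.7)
The plan is to construct $\mathcal{A}'$ by leaving $\mathcal{A}'_n = \mathcal{A}_n$ (as required) and redesigning $\mathcal{A}'$ at input size $n+m$ so that it approximately reproduces what $\mathcal{A}_n$ would return on an $n$-subset of the enlarged data set. The construction must be symmetric in its $n+m$ inputs, which rules out the naive choice ``apply $\mathcal{A}_n$ to the first $n$ points''. The natural symmetric remedy is to average over all $n$-subsets:
\[
\mathcal{A}'_{n+m}(Z_1,\ldots,Z_{n+m})(x) \;=\; \binom{n+m}{n}^{-1} \sum_{\substack{S \subseteq [n+m] \\ |S|=n}} \mathcal{A}_n\bigl(\{Z_i\}_{i \in S}\bigr)(x),
\]
and set $\mathcal{A}'_k = \mathcal{A}_k$ for every other size $k$ (so that $\mathcal{A}'_n = \mathcal{A}_n$ in particular). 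Symmetry in $(Z_1,\ldots,Z_{n+m})$ is immediate from the sum over all $n$-subsets, and pointwise measurability of the average is inherited from the measurability of each $\mathcal{A}_n(\{Z_i\}_{i\in S})$.

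Next I would bound $\beta^{\textnormal{out}}_{m,n}(\mathcal{A}',P)$ using the triangle inequality applied inside the averaged construction: with $Z_1,\ldots,Z_{n+m},X$ drawn iid from $P$, since $\mathcal{A}'_n(Z_{[n]}) = \mathcal{A}_n(Z_{[n]})$,
\[
\beta^{\textnormal{out}}_{m,n}(\mathcal{A}',P) \;\leq\; \binom{n+m}{n}^{-1} \sum_{S} \mathbb{E}\bigl[\,|\mathcal{A}_n(Z_{[n]})(X) - \mathcal{A}_n(Z_S)(X)|\,\bigr].
\]
For each $n$-subset $S\subseteq [n+m]$, the datasets $Z_{[n]}$ and $Z_S$ share exactly $|S\cap[n]|=n-k_S$ points and differ in $k_S := n - |S\cap[n]|$ points, with all points iid. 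By iid exchangeability together with the symmetry of $\mathcal{A}$, the inner expectation is exactly $\bar\beta^{\textnormal{out}}_{k_S,n}(\mathcal{A},P)$. Since $k_S$ ranges in $\{0,1,\ldots,\min(n,m)\} \subseteq \{0,\ldots,m\}$, Proposition~\ref{prop:beta_swap_order} (monotonicity of swap-stability in the number of swapped points) bounds each term by $\bar\beta^{\textnormal{out}}_{m,n}(\mathcal{A},P)$, and averaging gives the conclusion.

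The only thing requiring care is verifying that the joint distribution of $(Z_{[n]},Z_S,X)$, with overlap $n-k_S$, really matches the distribution used to define $\bar\beta^{\textnormal{out}}_{k_S,n}(\mathcal{A},P)$; this is a routine exchangeability check using iid-ness of the $Z_i$'s and the symmetry of $\mathcal{A}_n$. There is no serious obstacle beyond identifying the averaging construction---once it is written down, the remainder is a triangle inequality, a relabeling of iid data points, and an invocation of monotonicity.
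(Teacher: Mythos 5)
Your proposal is correct and follows essentially the same route as the paper: the identical symmetrized construction averaging $\mathcal{A}_n$ over all $n$-subsets of the size-$(n+m)$ data set, followed by the triangle inequality, identification of each term as $\bar\beta^{\textnormal{out}}_{k,n}(\mathcal{A},P)$ via exchangeability, and an appeal to the monotonicity result (Proposition~\ref{prop:beta_swap_order}) to bound each term by $\bar\beta^{\textnormal{out}}_{m,n}(\mathcal{A},P)$.
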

\begin{proof}[Proof of Proposition~\ref{prop:swap_stability_implies_stability}]
Define the algorithm $\mathcal{A}'$ by setting $\mathcal{A}'_k = \mathcal{A}_k$ for all $k\neq n+m$, and defining $\mathcal{A}'_{n+m}$ as follows: given a data set $(X_1,Y_1),\dots,(X_{n+m},Y_{n+m})$, for each subset $S\subseteq[n+m]$ with $|S|=n$, let $\widehat\mu_S = \mathcal{A}_n(\{(X_j,Y_j)\}_{j\in S})$. Then $\mathcal{A}'_{n+m}$ returns the map $\widehat\mu_{n+m}$ given by
\[\widehat\mu_{n+m}(x) = \frac{1}{{n+m\choose n}}\sum_{S\subseteq[n+m],|S|=n} \widehat\mu_S(x).\]
In other words, $\mathcal{A}'_{n+m}$ operates by taking an expected value over running $\mathcal{A}_n$ on a random subset of size $n$.

Now we have
\begin{multline*}\beta^{\textnormal{out}}_{m,n}(\mathcal{A}',P) = \mathbb{E}\left[
\left|\widehat\mu_n(X) - \widehat\mu_{n+m}(X)\right|\right]
= \mathbb{E}\left[
\left|\widehat\mu_n(X) - \frac{1}{{n+m\choose n}}\sum_{S\subseteq[n+m],|S|=n} \widehat\mu_S(X)\right|\right]\\
\leq \frac{1}{{n+m\choose n}}\sum_{S\subseteq[n+m],|S|=n}\mathbb{E}\left[ \left|\widehat\mu_n(X) 
- \widehat\mu_S(X)\right|\right].
\end{multline*}
Next, for each $S$, we need to bound $\mathbb{E}\left[ \left|\widehat\mu_n(X) 
- \widehat\mu_S(X)\right|\right]$. Let $k = |S \cap \{n+1,\dots,n+m\}|$, then $0 \le k \le m$. Without loss of generality, $S = \{1,\dots,n-k, n+1,\dots,n+k\}$. Then $\mathbb{E}[|\widehat\mu_n(X) - \widehat\mu_S(X)] = \bar\beta^{\textnormal{out}}_{k,n}(\mathcal{A},P)\leq \bar\beta^{\textnormal{out}}_{m,n}(\mathcal{A},P)$, where the last step holds for $k\leq m$ by applying Proposition~\ref{prop:beta_swap_order}, which is given below. This completes the proof.
\end{proof}

We are now ready to see how this partial converse reveals that swap-stability is sufficient for training-conditional coverage of the jackknife+.
\begin{proof}[Proof of Theorem~\ref{thm:jack_swap} (jackknife+ with swap-stability)]
First, we can observe that $\widehat{C}_n^{\textnormal{$\gamma$-J+}}(X_{n+1})$ depends on the algorithm $\mathcal{A}$ only through the map $\mathcal{A}_{n-1}$; that is, when running jackknife+, $\mathcal{A}$ is only ever trained on data sets of size $n-1$. Now let $\mathcal{A}'$ be defined as in Proposition~\ref{prop:swap_stability_implies_stability} (except with $n-1$ in place of $n$), so that we have $\mathcal{A}'_{n-1} = \mathcal{A}_{n-1}$. In other words, at sample size $n-1$, running jackknife+ with $\mathcal{A}$ is exactly equivalent to running jackknife+ with algorithm $\mathcal{A}'$. Therefore, the results of Theorems~\ref{thm:jack_main} and~\ref{thm:jack_main_uninflated} can be applied with $\mathcal{A}'$ in place of $\mathcal{A}$; applying Proposition~\ref{prop:swap_stability_implies_stability} tells us that $\beta_{m,n-1}^{\textnormal{out}}(\mathcal{A}',P) \leq \bar{\beta}_{m,n-1}^{\textnormal{out}}(\mathcal{A},P)$, which verifies the desired finite-sample training-coverage guarantee. Finally, the asymptotic guarantees with swap-stability are proved exactly as for (non-swap) stability, i.e., following the same arguments as in the proof of Theorem~\ref{thm:jack_asymp}.
\end{proof}

\subsection{Proof of Theorem~\ref{thm:CP_swap} (full conformal with swap-stability)}\label{sec:proof_thm:CP_swap}
Next we prove that swap-stability is sufficient to achieve finite-sample and asymptotic training-conditional coverage for full conformal. 
As for jackknife+, the asymptotic guarantee is proved exactly as before, by combining the finite-sample guarantee (which we will prove next) with Lemma~\ref{lem:m_vs_1}.

For the finite-sample guarantee, on the other hand, the proof for jackknife+, which relied on the ``weak converse'' relating out-of-sample stability to out-of-sample swap-stability (Proposition~\ref{prop:swap_stability_implies_stability}), cannot be reproduced for full conformal, because the ``weak converse'' result may no longer hold for in-sample stability; we will need to take a different approach.

To prove the finite-sample guarantee, we modify the proof of Theorem~\ref{thm:CP_main} by taking a different definition of the oracle residuals, $\widetilde{R}_j$: we define
\[\widetilde{R}_j = \left|Y_j - \frac{1}{{n+m-1\choose n}}\sum_{S\subseteq[n+m], |S|=n+1, j\in S} \widetilde{\mu}_S(X_j)\right|, \ j\in[n+m],\]
where for any subset $S\subseteq[n+m]$ we define $\widetilde{\mu}_S = \mathcal{A}\big(\{(X_j,Y_j)\}_{j\in S}\big)$. Exchangeability of the data, along with symmetry of $\mathcal{A}$, ensures exchangeability of $(\widetilde{R}_1,\dots,\widetilde{R}_{n+m})$, as before.

From this point on, the proof proceeds exactly as for Theorem~\ref{thm:CP_main}, except that in place of the bounds~\eqref{eqn:CP_key_step_a} and~\eqref{eqn:CP_key_step_b}, we instead need to verify the bounds
\[\mathbb{E}\left[ | R_{j;i} - \widetilde{R}_j |\right] 
 \leq \bar\beta^{\textnormal{in}}_{m-1,n+1}(\mathcal{A},P), \ j\in[n],\]
and
\[\mathbb{E}\left[ | R_i - \widetilde{R}_i |\right] \leq \bar\beta^{\textnormal{in}}_{m-1,n+1}(\mathcal{A},P), \ i\in\{n+1,\dots,n+m\}.\]
For the first bound, for $j\in [n]$, we have
\begin{multline*}\mathbb{E}\left[ | R_{j;i} - \widetilde{R}_j |\right]
= \mathbb{E}\left[ \left| |Y_j - \widehat\mu_{+i}(X_j)| - \left|Y_j - \frac{1}{{n+m-1\choose n}}\sum_{S\subseteq[n+m],|S|=n+1,j\in S} \widetilde{\mu}_S(X_j)\right|\right|\right]\\
\leq \mathbb{E}\left[ \left| \widehat\mu_{+i}(X_j) -  \frac{1}{{n+m-1\choose n}}\sum_{S\subseteq[n+m],|S|=n+1,j\in S} \widetilde{\mu}_S(X_j)\right|\right]\\\leq \frac{1}{{n+m-1\choose n}}\sum_{S\subseteq[n+m],|S|=n+1,j\in S} \mathbb{E}\left[ \left| \widehat\mu_{+i}(X_j) -   \widetilde{\mu}_S(X_j)\right|\right]\leq \bar\beta^{\textnormal{in}}_{m-1,n+1}(\mathcal{A},P),
\end{multline*}
where the last step holds due to Proposition~\ref{prop:beta_swap_order}. The second bound is proved similarly.

\section{Proofs for $m$-stability calculations}
\subsection{Proof of Proposition~\ref{prop:beta_swap_order} (monotonicity of swap-stability in $m$)}\label{sec:proof_prop:beta_swap_order}
The proof of the proposition relies on the following lemma:
\begin{lemma}\label{lem:L1_lemma}
Let $A,B,C$ be mutually independent random variables, and let $(A',B',C')$ be an i.i.d.\ copy of $(A,B,C)$. Then for any function $f$,
\[\mathbb{E}\left[|f(A,B,C) - f(A,B,C')|\right] \leq \mathbb{E}\left[|f(A,B,C) - f(A,B',C')|\right].\]
\end{lemma}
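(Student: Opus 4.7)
First, I would condition on $A$ to reduce the claim to a two-variable statement. Since $A$ is independent of $(B, C, B', C')$ and appears in the same position on both sides, it suffices to prove the analogous inequality with $A$ dropped: for any measurable function $h$ and any mutually independent $B, C, B', C'$ with $B \stackrel{d}{=} B'$ and $C \stackrel{d}{=} C'$,
\[
\mathbb{E}\left[|h(B, C) - h(B, C')|\right] \;\le\; \mathbb{E}\left[|h(B, C) - h(B', C')|\right].
\]
The inequality for $f$ then follows by integrating over the law of $A$.

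The main tool I would use is the layer-cake representation $|u - v| = \int_{\mathbb{R}} \mathbf{1}\{\min(u, v) \le t < \max(u, v)\}\,\mathrm{d}t$. Applying it inside each expectation and using Fubini, together with the symmetries $(B, C, C') \stackrel{d}{=} (B, C', C)$ on the left and $(B, C, B', C') \stackrel{d}{=} (B', C', B, C)$ on the right, each side reduces to twice an integral over $t$ of a ``one-sided'' probability. Introducing the conditional CDF $F_b(t) := \mathbb{P}\{h(b, C) \le t\}$ and its marginal $G(t) := \mathbb{E}_B[F_B(t)] = \mathbb{P}\{h(B, C) \le t\}$, conditioning on $B$ on the left (and using $C \perp C'$) while using the full independence of $(B, C)$ and $(B', C')$ on the right would yield the closed-form expressions
\[
\mathbb{E}\left[|h(B, C) - h(B, C')|\right] = 2\int_{\mathbb{R}} \mathbb{E}_B\!\left[F_B(t)(1 - F_B(t))\right]\,\mathrm{d}t,
\]
\[
\mathbb{E}\left[|h(B, C) - h(B', C')|\right] = 2\int_{\mathbb{R}} G(t)(1 - G(t))\,\mathrm{d}t.
\]

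At this point the desired inequality becomes pointwise in $t$ and is exactly Jensen's inequality for the concave function $x \mapsto x(1-x)$: $\mathbb{E}_B[F_B(t)(1 - F_B(t))] \le G(t)(1 - G(t))$, which is equivalent to $\mathrm{Var}(F_B(t)) \ge 0$. Integrating over $t$ then gives the lemma.

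The main conceptual point, which I would flag, is that this inequality genuinely \emph{fails} if one instead conditions on $(C, C')$: simple two-point examples (e.g., $h(b, c) = \pm b$ depending on which of two values $c$ takes) show that conditionally on a fixed pair $(C, C') = (c, c')$, the ``unperturbed'' side can strictly exceed the ``perturbed'' side. The proof therefore relies crucially on averaging over $C, C'$ and conditioning instead on $B$---it is only the marginal CDF $G$, rather than any conditional version, that unlocks the Jensen step against the average of $F_B(1 - F_B)$. Beyond that, the only routine check is integrability in the Fubini swap, which is automatic whenever the two expectations in the statement are finite.
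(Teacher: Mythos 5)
Your proposal is correct, and it takes a genuinely different route from the paper. Both arguments begin the same way, by conditioning on $A$ and reducing to the two-variable statement, but the key mechanisms differ. The paper works with quantile (inverse CDF) representations: it writes $\mathbb{E}\left[|f(a,B,C)-f(a,B,C')|\right]$ and $\mathbb{E}\left[|f(a,B,C)-f(a,B',C')|\right]$ in the form $4\,\mathbb{E}[\,\cdot\,(U)\cdot U]-2\,\mathbb{E}[f(a,B,C)]$ via the identity $\mathbb{E}|X-X'|=2\,\mathbb{E}[g(U)(2U-1)]$ for i.i.d.\ variables, and then concludes by the rearrangement inequality (the comonotone coupling maximizes $\mathbb{E}[XY]$ among couplings with fixed marginals), comparing the monotone function $g_a$ with the non-monotone randomized family $h_{a,B}$. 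You instead use the layer-cake formula with Tonelli to reduce each side to $2\int \mathbb{P}\{\cdot \le t < \cdot\}\,\mathrm{d}t$, obtain the closed forms $2\int \mathbb{E}_B[F_B(t)(1-F_B(t))]\,\mathrm{d}t$ and $2\int G(t)(1-G(t))\,\mathrm{d}t$ with $G=\mathbb{E}_B[F_B]$, and finish pointwise in $t$ by Jensen for the concave map $x\mapsto x(1-x)$, i.e.\ $\mathrm{Var}(F_B(t))\ge 0$. Your route is more elementary (no quantile functions or rearrangement inequality), and it has the added benefit of an exact expression for the gap, namely $2\int \mathrm{Var}_B(F_B(t))\,\mathrm{d}t$, which makes the equality case transparent; since the integrands are nonnegative, Tonelli even makes the Fubini step unconditional, so your integrability caveat can be dropped. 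The paper's rearrangement argument is a natural alternative that some readers may find more familiar from the stochastic-ordering literature, but it delivers nothing extra here. Your closing observation that conditioning on $(C,C')$ would break the inequality is correct and consistent with both proofs, though it is not needed for the argument.
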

Define models
\begin{align*}
    \widehat\mu_n &= \mathcal{A}\left((X_1,Y_1),\dots,(X_n,Y_n)\right),\\
    \widehat\mu_n' &= \mathcal{A}\left((X_1,Y_1),\dots,(X_{n-m},Y_{n-m}),(X'_{n-m+1},Y'_{n-m+1}),\dots,(X'_n,Y'_n)\right),\\
    \widehat\mu_n'' &= \mathcal{A}\left((X_1,Y_1),\dots,(X_{n-m'},Y_{n-m'}),(X'_{n-m'+1},Y'_{n-m'+1}),\dots,(X'_n,Y'_n)\right),
\end{align*}
where all data points are drawn i.i.d.\ from $P$. Then by definition we have
\[\bar\beta^{\textnormal{out}}_{m,n}(\mathcal{A},P)  = \mathbb{E}\left[|\widehat\mu_n(X) - \widehat\mu_n'(X)|\right], \quad \bar\beta^{\textnormal{out}}_{m',n}(\mathcal{A},P)  = \mathbb{E}\left[|\widehat\mu_n(X) - \widehat\mu_n''(X)|\right]. \]
Now define 
\begin{align*}
    A &= \big((X_1,Y_1),\dots,(X_{n-m'},Y_{n-m'})\big),\\
    B &= \big((X_{n-m'+1},Y_{n-m'+1}),\dots,(X_{n-m},Y_{n-m})\big),\\
    C &= \big((X_{n-m+1},Y_{n-m+1}),\dots,(X_n,Y_n)\big).
\end{align*}
Then, following the notation of Lemma~\ref{lem:L1_lemma}, we can write 
\[\widehat\mu_n(X) = f(A,B,C),\quad \widehat\mu'_n(X) = f(A,B,C'), \quad \widehat\mu''_n(X) = f(A,B',C'),\]
for the appropriately defined function $f$ (i.e., $f$ is the function that applies the algorithm $\mathcal{A}$ to all $n$ data pairs contained in $A,B,C$, and then evaluates the resulting fitted model on an independent copy of the data $(X,Y)$). Applying the lemma immediately verifies that $\bar\beta^{\textnormal{out}}_{m,n}(\mathcal{A},P)\leq \bar\beta^{\textnormal{out}}_{m',n}(\mathcal{A},P)$.

For in-sample stability, we instead have
\[\bar\beta^{\textnormal{in}}_{m,n}(\mathcal{A},P)  = \mathbb{E}\left[|\widehat\mu_n(X_1) - \widehat\mu_n'(X_1)|\right], \quad \bar\beta^{\textnormal{in}}_{m',n}(\mathcal{A},P)  = \mathbb{E}\left[|\widehat\mu_n(X_1) - \widehat\mu_n''(X_1)|\right]. \]
Choosing the same $A,B,C$, we now have
\[\widehat\mu_n(X_1) = f(A,B,C),\quad \widehat\mu'_n(X_1) = f(A,B,C'), \quad \widehat\mu''_n(X_1) = f(A,B',C'),\]
for an appropriate new definition of the function $f$ (i.e., $f$ is the function that applies the algorithm $\mathcal{A}$ to all $n$ data pairs contained in $A,B,C$, and then evaluates the resulting fitted model on the first value of $A$). Applying the lemma  verifies that $\bar\beta^{\textnormal{in}}_{m,n}(\mathcal{A},P)\leq \bar\beta^{\textnormal{in}}_{m',n}(\mathcal{A},P)$.

\begin{proof}[Proof of Lemma~\ref{lem:L1_lemma}]
We will show the stronger statement
\[\mathbb{E}\left[|f(A,B,C) - f(A,B,C')| \mid A\right] \leq \mathbb{E}\left[|f(A,B,C) - f(A,B',C')| \mid A\right]\textnormal{ almost surely}.\]
Equivalently, we will show
\[\mathbb{E}\left[|f(a,B,C) - f(a,B,C')|\right] \leq \mathbb{E}\left[|f(a,B,C) - f(a,B',C')| \right]\]
for all $a$.

First, let $g_a:[0,1]\rightarrow\mathbb{R}$ be a left-continuous monotone nondecreasing function such that $g_a(U)$ is equal in distribution to  $f(a,B,C)$, where $U\sim\textnormal{Unif}[0,1]$---that is, $g_a$ is the inverse CDF of the random variable $f(a,B,C)$.
Then we have
\[\mathbb{E}\left[|f(a,B,C) - f(a,B',C')|\right] = \mathbb{E}\left[|g_a(U)-g_a(U')|\right],\]
where $U,U'\stackrel{\textnormal{iid}}{\sim}\textnormal{Unif}[0,1]$.
Next,  we can calculate
\begin{multline*}
    \mathbb{E}\left[|g_a(U)-g_a(U')|\right]
= \mathbb{E}\left[(g_a(U) - g_a(U'))\cdot\textnormal{sign}(g_a(U)-g_a(U'))\right] \\
= \mathbb{E}\left[(g_a(U) - g_a(U'))\cdot (\mathbf{1}_{U>U'} - \mathbf{1}_{U'>U})\right]
= 2\mathbb{E}\left[g_a(U) \cdot (\mathbf{1}_{U>U'} - \mathbf{1}_{U'>U})\right]
\\
= 2\mathbb{E}[g_a(U)\cdot \mathbb{E}[\mathbf{1}_{U>U'} - \mathbf{1}_{U'>U} \mid U]]
= 2\mathbb{E}[g_a(U)\cdot (2U - 1)].\end{multline*}
Combining our calculations so far, then, we have shown that
\[\mathbb{E}\left[|f(a,B,C) - f(a,B',C')|\right] =2\mathbb{E}[g_a(U)\cdot(2U-1)] = 4\mathbb{E}[g_a(U)\cdot U] - 2\mathbb{E}[f(a,B,C)],\]
where the last step holds since $g_a(U)$ is equal in distribution to $f(a,B,C)$, by construction.

Next, for each $a,b$, let $h_{a,b}[0,1]\rightarrow\mathbb{R}$ be a left-continuous monotone nondecreasing function such that $h_{a,b}(U)$ is equal in distribution to  $f(a,b,C)$, where $U\sim\textnormal{Unif}[0,1]$---that is, $h_{a,b}$ is the inverse CDF of the random variable $f(a,b,C)$.
Following identical calculations, we obtain
\[\mathbb{E}\left[|f(a,b,C) - f(a,b,C')|\right] = 2\mathbb{E}[h_{a,b}(U)\cdot(2U-1)].\]
Therefore, it holds almost surely over $B$ that
\[\mathbb{E}\left[|f(a,B,C) - f(a,B,C')| \mid B\right] = 2\mathbb{E}[h_{a,B}(U)\cdot(2U-1) \mid B],\]
and after marginalizing over $B$, then,
\[\mathbb{E}\left[|f(a,B,C) - f(a,B,C')| \right] = 2\mathbb{E}[h_{a,B}(U)\cdot(2U-1) ] = 4\mathbb{E}[h_{a,B}(U)\cdot U] - 2 \mathbb{E}[f(a,B,C)],\]
where the last step holds since $h_{a,B}(U)$ is equal in distribution to $f(a,B,C)$, by construction.

To complete the proof we then need to verify that
\[\mathbb{E}[h_{a,B}(U)\cdot U ] \leq \mathbb{E}[g_a(U)\cdot U].\]
As above, $g_a(U)$ and $h_{a,B}(U)$ are each equal in distribution to $f(a,B,C)$, and therefore are equal in distribution to each other. However, $g_a$ is also a monotone nondecreasing function. The desired bound then holds by the rearrangement inequality, which tells us that, for any random variables $(X,Y)$ and any monotone nondecreasing functions $f,g$ such that $f(Z),g(Z)$ are equal in distribution to $X,Y$, respectively, for some random variable $Z$, it holds that $\mathbb{E}[XY]\leq \mathbb{E}[f(Z)g(Z)]$.\end{proof}

\subsection{Proof of Lemma~\ref{lem:m_vs_1} (relating $1$-stability and $m$-stability)}
Let $(X,Y),(X_1,Y_1),$ $(X_2,Y_2),\dots,(X_{n+m},Y_{n+m})$ be i.i.d.\ samples drawn from $P$.
Define fitted model $\widehat\mu_k = \mathcal{A}((X_1,Y_1),\dots,(X_k,Y_k))$, for each $k=n,n+1,\dots,n+m$. Then we have
\[\beta_{m,n}^{\textnormal{out}}(\mathcal{A},P)
= \mathbb{E}\left[\left|\widehat\mu_n(X) - \widehat\mu_{n+m}(X)\right|\right] \leq \sum_{k=n}^{n+m-1}\mathbb{E}\left[\left|\widehat\mu_k(X) - \widehat\mu_{k+1}(X)\right|\right] = \sum_{k=n}^{n+m -1} \beta_{1,k}^{\textnormal{out}}(\mathcal{A},P),\]
by definition of out-of-sample $1$-stability. A similar argument holds for in-sample stability as well.

Next, we prove the analogous results for swap-stability. For each $k=0,\dots,m$, we define  the fitted model $\widehat\mu_n^{(k)} = \mathcal{A}((X_1,Y_1),\dots,(X_{n-k},Y_{n-k}),(X'_{n-k+1},Y'_{n-k+1}),\dots,(X'_n,Y'_n))$, where the data points $(X,Y),(X_1,Y_1),\dots,(X_n,Y_n),(X'_{n-m+1},Y'_{n-m+1}),\dots,(X'_n,Y'_n)$ are i.i.d.\ samples drawn from $P$. Then we have
\[\bar{\beta}_{m,n}^{\textnormal{out}}(\mathcal{A},P)
= \mathbb{E}\left[\left|\widehat\mu_n^{(0)}(X) - \widehat\mu_n^{(m)}(X)\right|\right]\leq \sum_{k=1}^m\mathbb{E}\left[\left|\widehat\mu_n^{(k-1)}(X) - \widehat\mu_n^{(k)}(X)\right|\right] = \sum_{k=1}^m \bar{\beta}_{1,n}^{\textnormal{out}}(\mathcal{A},P) = m\bar{\beta}_{1,n}^{\textnormal{out}}(\mathcal{A},P),\]
by definition of out-of-sample $1$-swap-stability together with the assumption of symmetry of $\mathcal{A}$. Again, a similar argument holds for in-sample swap-stability.

\subsection{Proof of Proposition~\ref{prop:kNN} ($k$-nearest neighbors)}
For simplicity, we will assume that ties (for determining the set of nearest neighbors) do not occur, almost surely, for data drawn from $P$ (e.g., $X$ might be continuously distributed); a similar argument would hold in the case of randomly breaking ties, for the general case (if we allow $\mathcal{A}$ to be randomized in order to enable tiebreaking). 

We calculate
\begin{align*}
    \beta^{\textnormal{out}}_{m,n}(\mathcal{A},P)
    &=\mathbb{E}\left[|\widehat\mu_{n+m}(X) - \widehat\mu_n(X)|\right]\\
    &=\mathbb{E}\left[\left|\frac{1}{k}\sum_{i\in N_{k,n+m}(X)}Y_i -\frac{1}{k}\sum_{i\in N_{k,n}(X)}Y_i\right| \right]\\
    &=\mathbb{E}\left[\left|\frac{1}{k}\sum_{i\in N_{k,n+m}(X)\backslash N_{k,n}(X)}Y_i -\frac{1}{k}\sum_{i\in N_{k,n}(X)\backslash N_{k,n+m}(X)}Y_i\right| \right]\\
    &\leq  \mathbb{E}\left[\frac{B_Y}{k}\cdot|N_{k,n}(X)\triangle N_{k,n+m}(X)|\right],
\end{align*}
since $\max_i |Y_i|\leq B_Y$ holds almost surely by assumption,
where $\triangle$ denotes the symmetric set difference.
Since $|N_{k,n}(X)| = |N_{k,n+m}(X)| = k$ by construction, this means that
$|N_{k,n}(X)\triangle N_{k,n+m}(X)| = 2|N_{k,n+m}(X)\backslash N_{k,n}(X)|$, and so we have
\[\beta^{\textnormal{out}}_{m,n}(\mathcal{A},P)\leq \frac{2B_Y}{k}\cdot \mathbb{E}\left[|N_{k,n+m}(X)\backslash N_{k,n}(X)|\right].\]
Next, by definition of the set of $k$-nearest neighbors, if $i\in[n]$ and $i\in N_{k,n+m}(X)$ then we must have $i\in N_{k,n}(X)$---that is, if $X_i$ is one of the nearest neighbors of $X$ amongst the \emph{larger} collection of points $X_1,\dots,X_{n+m}$, then the same is also true within the smaller collection $X_1,\dots,X_n$. Therefore,
\[\mathbb{E}\left[|N_{k,n+m}(X)\backslash N_{k,n}(X)|\right]
\leq \mathbb{E}\left[|N_{k,n+m}(X)\cap \{n+1,\dots,n+m\}|\right]
= \sum_{i=n+1}^{n+m}\mathbb{P}\{i\in N_{k,n+m}(X)\} = m \cdot \frac{k}{n+m},\]
since, due to exchangeability of the data points, any point is equally likely to be one of the nearest neighbors of $X$. Combining everything, then, we have established that $\beta^{\textnormal{out}}_{m,n}(\mathcal{A},P)\leq 2B_Y \cdot \frac{m}{n+m}$, as desired.

Next we turn to $\beta^{\textnormal{in}}_{m,n}(\mathcal{A},P)$. By an identical calculation, we have
\[\beta^{\textnormal{in}}_{m,n}(\mathcal{A},P)\leq \frac{2B_Y}{k}\cdot \mathbb{E}\left[|N_{k,n+m}(X_1)\backslash N_{k,n}(X_1)|\right]
\leq \frac{2B_Y}{k}\cdot \sum_{i=n+1}^{n+m}\mathbb{P}\{i\in N_{k,n+m}(X_1)\}.\]
Moreover, we must have $1\in N_{k,n+m}(X_1)$ by construction; for the remaining $k-1$ many nearest neighbors, each remaining point (i.e., each $i\in\{2,\dots,n+m\}$) is equally likely to be one of these neighbors, and so $\mathbb{P}\{i\in N_{k,n+m}(X_1)\} = \frac{k-1}{n+m-1} \leq \frac{k}{n+m}$ for each $i=n+1,\dots,n+m$, which leads to the desired bound on $\beta^{\textnormal{in}}_{m,n}(\mathcal{A},P)$.

\subsection{Proof of Proposition~\ref{prop:ridge} (ridge regression)}

Define 
\[\widehat{\Sigma}_n = \frac{1}{n}\sum_{i=1}^n X_iX_i^\top, \ \widehat{S}_n = \frac{1}{n}\sum_{i=1}^n X_iY_i,\]
so that we have
\[\widehat{w}_n = (\widehat{\Sigma}_n + \lambda\mathbf{I}_d)^{-1}\widehat{S}_n.\]
Define also
\[\widehat{\Sigma}_{>n} = \frac{1}{m}\sum_{i=n+1}^{n+m} X_iX_i^\top, \ 
\widehat{S}_{>n} = \frac{1}{m}\sum_{i=n+1}^{n+m} X_iY_i,\]
so that
\[\widehat{w}_{n+m} = \left(\frac{n}{n+m}\widehat{\Sigma}_n + \frac{m}{n+m}\widehat{\Sigma}_{>n} + \lambda\mathbf{I}_d\right)^{-1}\left(\frac{n}{n+m}\widehat{S}_n + \frac{m}{n+m}\widehat{S}_{>n}\right),\]
or equivalently,
\[\widehat{w}_{n+m} = \left(\widehat{\Sigma}_n + \frac{m}{n+m}(\widehat{\Sigma}_{>n}-\widehat{\Sigma}_n) + \lambda\mathbf{I}_d\right)^{-1}\left(\widehat{S}_n + \frac{m}{n+m}(\widehat{S}_{>n}-\widehat{S}_n)\right).\]
Then
\[\|\widehat{w}_{n+m}-\widehat{w}_n\|_2\leq \textnormal{(Term 1)} + \textnormal{(Term 2)},\]
where
\[\textnormal{(Term 1)} = \left\|\left(\widehat{\Sigma}_n + \frac{m}{n+m}(\widehat{\Sigma}_{>n}-\widehat{\Sigma}_n) + \lambda\mathbf{I}_d\right)^{-1} \cdot \frac{m}{n+m}(\widehat{S}_{>n}-\widehat{S}_n)\right\|_2\]
and
\[\textnormal{(Term 2)} = \left\| \left[\left(\widehat{\Sigma}_n + \frac{m}{n+m}(\widehat{\Sigma}_{>n}-\widehat{\Sigma}_n) + \lambda\mathbf{I}_d\right)^{-1} - \left(\widehat{\Sigma}_n  + \lambda\mathbf{I}_d\right)^{-1}\right] \cdot \widehat{S}_n\right\|_2.\]
For (Term 1), since $\widehat{\Sigma}_n + \frac{m}{n+m}(\widehat{\Sigma}_{>n}-\widehat{\Sigma}_n)  = \frac{1}{n+m}\sum_{i=1}^{n+m}X_iX_i^\top\succeq 0$, we have 
\[\textnormal{(Term 1)}\leq \left\|\left(\widehat{\Sigma}_n + \frac{m}{n+m}(\widehat{\Sigma}_{>n}-\widehat{\Sigma}_n) + \lambda\mathbf{I}_d\right)^{-1}\right\| \cdot \frac{m}{n+m}\|\widehat{S}_{>n}-\widehat{S}_n\|_2 \leq \lambda^{-1} \cdot \frac{m}{n+m} \cdot \|\widehat{S}_{>n}-\widehat{S}_n\|_2.\]
Therefore,
\begin{multline*}
   \mathbb{E}\left[\textnormal{(Term 1)}\right]
   \leq \frac{\lambda^{-1}m}{n+m}\mathbb{E}\left[\|\widehat{S}_{>n}-\widehat{S}_n\|_2\right]
   \leq \frac{\lambda^{-1}m}{n+m}\left(\mathbb{E}\left[\|\widehat{S}_{>n}-\mathbb{E}[XY]\|_2\right] + \mathbb{E}\left[\|\widehat{S}_n-\mathbb{E}[XY]\|_2\right]\right)
   \\\leq \frac{\lambda^{-1}m}{n+m}\left(\mathbb{E}\left[\|\widehat{S}_{>n}-\mathbb{E}[XY]\|^2_2\right]^{1/2} + \mathbb{E}\left[\|\widehat{S}_n-\mathbb{E}[XY]\|^2_2\right]^{1/2}\right).
\end{multline*}
We can also calculate
\begin{multline*}
\mathbb{E}\left[\|\widehat{S}_{>n}-\mathbb{E}[XY]\|^2_2\right]
=\mathbb{E}\left[\left\|\frac{1}{m}\sum_{i=n+1}^{n+m}(X_i Y_i - \mathbb{E}[XY])\right\|^2_2\right]\\
=\frac{1}{m^2} \sum_{i=n+1}^{n+m}\mathbb{E}\left[\|X_i Y_i - \mathbb{E}[XY]\|^2_2\right]
\leq \frac{1}{m^2} \sum_{i=n+1}^{n+m} (2B_XB_Y)^2
=\frac{4B_X^2B_Y^2}{m},
\end{multline*}
where the second step holds since the data points are independent.
We can also verify that $\mathbb{E}\left[\|\widehat{S}_n-\mathbb{E}[XY]\|^2_2\right]\leq \frac{4B_X^2B_Y^2}{n}$, by a similar calculation. Combining these calculations, then,
\[\mathbb{E}\left[\textnormal{(Term 1)}\right]
\leq  \frac{2B_XB_Y}{\lambda} \cdot \frac{m}{n+m} \cdot\left(\frac{1}{\sqrt{m}}+\frac{1}{\sqrt{n}}\right).\]

Next we turn to (Term 2). 
First, by considering the derivative of the matrix inverse, we have
\[\left(\widehat{\Sigma}_n + \frac{m}{n+m}(\widehat{\Sigma}_{>n}-\widehat{\Sigma}_n) + \lambda\mathbf{I}_d\right)^{-1} - \left(\widehat{\Sigma}_n  + \lambda\mathbf{I}_d\right)^{-1}
=- \left(\widetilde\Sigma_t + \lambda\mathbf{I}_d\right)^{-1} \cdot \left[\frac{m}{n+m}(\widehat{\Sigma}_{>n}-\widehat{\Sigma}_n)\right] \cdot \left(\widetilde\Sigma_t+ \lambda\mathbf{I}_d\right)^{-1}\]
for some $t\in[0,1]$, 
where \[\widetilde\Sigma_t = \widehat{\Sigma}_n + t\cdot \frac{m}{n+m}(\widehat{\Sigma}_{>n}-\widehat{\Sigma}_n).\] We can rewrite
\[\widetilde\Sigma_t = \widehat{\Sigma}_n \cdot \frac{n + (1-t)m}{n+m} + \widehat{\Sigma}_{>n} \cdot \frac{tm}{n+m} \succeq 0,\]
which implies $\|(\widetilde\Sigma_t+ \lambda\mathbf{I}_d)^{-1}\|\leq \lambda^{-1}$, and therefore,
\[\left\|\left(\widehat{\Sigma}_n + \frac{m}{n+m}(\widehat{\Sigma}_{>n}-\widehat{\Sigma}_n) + \lambda\mathbf{I}_d\right)^{-1} - \left(\widehat{\Sigma}_n  + \lambda\mathbf{I}_d\right)^{-1}\right\| \leq \lambda^{-2} \cdot \frac{m}{n+m}\cdot\|\widehat{\Sigma}_{>n} - \widehat{\Sigma}_n\|.\]
Combining everything so far, then, we have
\[\textnormal{(Term 2)}\leq  \lambda^{-2} \cdot \frac{m}{n+m}\cdot\|\widehat{\Sigma}_{>n} - \widehat{\Sigma}_n\| \cdot \|\widehat{S}_n\|_2.\]
Since $\|\widehat{S}_n\|_2\leq B_XB_Y$ by construction, we can therefore bound
\[\mathbb{E}\left[\textnormal{(Term 2)}\right]\leq  \frac{B_XB_Y}{\lambda^2} \cdot \frac{m}{n+m}\cdot\mathbb{E}\left[\|\widehat{\Sigma}_{>n} - \widehat{\Sigma}_n\|\right].\]
We also have
\begin{align*}
    \mathbb{E}\left[\|\widehat{\Sigma}_{>n} - \widehat{\Sigma}_n\|\right]
&\leq \mathbb{E}\left[\|\widehat{\Sigma}_{>n} - \mathbb{E}[XX^\top]\|\right] +\mathbb{E}\left[\|\widehat{\Sigma}_n - \mathbb{E}[XX^\top]\|\right] \\
&\leq \mathbb{E}\left[\|\widehat{\Sigma}_{>n} - \mathbb{E}[XX^\top]\|_{\textnormal{F}}\right]+\mathbb{E}\left[\|\widehat{\Sigma}_n - \mathbb{E}[XX^\top]\|_{\textnormal{F}}\right] \\
&\leq \mathbb{E}\left[\|\widehat{\Sigma}_{>n} - \mathbb{E}[XX^\top]\|^2_{\textnormal{F}}\right]^{1/2} +\mathbb{E}\left[\|\widehat{\Sigma}_n - \mathbb{E}[XX^\top]\|^2_{\textnormal{F}}\right]^{1/2}
\end{align*}
since the matrix operator norm is bounded by the Frobenius norm. We can then calculate
\begin{align*}
    \mathbb{E}\left[\|\widehat{\Sigma}_{>n} - \mathbb{E}[XX^\top]\|^2_{\textnormal{F}}\right]
    &=\mathbb{E}\left[\left\|\frac{1}{m}\sum_{i=n+1}^{n+m}(X_iX_i^\top - \mathbb{E}[XX^\top])\right\|^2_{\textnormal{F}}\right]\\
    &=\frac{1}{m^2}\sum_{i=n+1}^{n+m}\mathbb{E}\left[\|X_iX_i^\top - \mathbb{E}[XX^\top]\|^2_{\textnormal{F}}\right]\\
    &\leq \frac{1}{m^2}\sum_{i=n+1}^{n+m} (2B_X^2)^2 = \frac{4B_X^4}{m},
\end{align*}
where the second step holds since the data points are independent. And, by a similar calculation, we can also verify that $\mathbb{E}\left[\|\widehat{\Sigma}_n - \mathbb{E}[XX^\top]\|^2_{\textnormal{F}}\right]\leq \frac{4B_X^4}{n}$. Combining everything, then,
\[\mathbb{E}\left[\textnormal{(Term 2)}\right]\leq  \frac{2B_X^3B_Y}{\lambda^2} \cdot \frac{m}{n+m}\cdot\left(\frac{1}{\sqrt{m}} + \frac{1}{\sqrt{n}}\right).\]
Finally, combining (Term 1) and (Term 2),
\[\mathbb{E}\left[\|\widehat{w}_{n+m}-\widehat{w}_n\|_2\right]\leq \frac{2B_XB_Y}{\lambda}\left(1 + \frac{B_X^2}{\lambda}\right) \cdot \frac{m}{n+m} \cdot\left(\frac{1}{\sqrt{m}}+\frac{1}{\sqrt{n}}\right).\]

We therefore have
\begin{multline*}\beta^{\textnormal{out}}_{m,n}(\mathcal{A},P) = \mathbb{E}\left[\left|\widehat\mu_{n+m}(X) - \widehat\mu_n(X)\right|\right] 
= \mathbb{E}\left[ \left|\widehat{w}_{n+m}^\top X - \widehat{w}_n^\top X \right|\right]\\
\leq \mathbb{E}\left[\|X\|_2 \cdot \|\widehat{w}_{n+m}-\widehat{w}_n\|_2\right] \leq B_X \cdot \mathbb{E}\left[\|\widehat{w}_{n+m}-\widehat{w}_n\|_2\right].
\end{multline*}
Similarly, it also holds that
\[\beta^{\textnormal{in}}_{m,n}(\mathcal{A},P) = \mathbb{E}\left[\left|\widehat\mu_{n+m}(X_1) - \widehat\mu_n(X_1)\right|\right]  \leq B_X \cdot \mathbb{E}\left[\|\widehat{w}_{n+m}-\widehat{w}_n\|_2\right].\]
Plugging in our bound on $\mathbb{E}\left[\|\widehat{w}_{n+m}-\widehat{w}_n\|_2\right]$ thus completes the proof.

\subsection{Proof of Proposition~\ref{prop:bagging} (bagging)}

First, following the terminology of \citet{soloff2023bagging,soloff2024stability}, we define ``derandomized'' bagging by taking $B\to\infty$---in other words, bagging averages over $B$ many randomly sampled bags, while derandomized bagging takes an expected value over the random bag (i.e., the random subset of $N$ out of $n$ data points, sampled uniformly without replacement). Formally, given a data set $\mathcal{D}_n=  \big\{(X_i,Y_i)\big\}_{i\in[n]}$, derandomized bagging returns the model
\[\widetilde{\mathcal{A}}_{\textnormal{bag}}(\mathcal{D}_{n}) = \widetilde\mu_{n}\textnormal{ where }\widetilde\mu_{n}(x) = \frac{1}{{n\choose N}}\sum_{S\subseteq[n], |S|=N} \widehat\mu_S(x),\]
where $\widehat\mu_S = \mathcal{A}(\{(X_i,Y_i)\}_{i\in S})$ is the model trained on the subsample defined by indices $i\in S$. 

\citet[Theorem 6]{soloff2024stability} implies that
\[\mathbb{E}\left[\left(\widetilde{\mu}_{n}(X) - \widetilde{\mu}_{n+1}(X)\right)^2\right]\leq \frac{1}{4n}\cdot \frac{N}{n+1-N},\]
for any $n$. We can view this as a stronger version of the out-of-sample $1$-stability condition---that is, this bounds the expected \emph{squared} perturbation, i.e., the $L_2$ norm rather than the $L_1$ norm.
Replacing $n$ with $n+k-1$ (for some $1\leq k\leq m$), then,
\[\mathbb{E}\left[\left(\widetilde{\mu}_{n+k-1}(X) - \widetilde{\mu}_{n+k}(X)\right)^2\right]\leq \frac{1}{4(n+k-1)}\cdot \frac{N}{n+k-N} .\]
Here, and below, for each $k\geq 0$ we define the model $\widetilde{\mu}_{n+k} = \widetilde{\mathcal{A}}_{\textnormal{bag}}(\mathcal{D}_{n+k})$, i.e., the output of derandomized bagging run on the data set $\mathcal{D}_{n+k} = \big\{(X_i,Y_i)\big\}_{i\in[n+k]}$, where $(X_1,Y_1),(X_2,Y_2),\dots$ denotes a sequence of data points sampled i.i.d.\ from $P$.

We then have
\begin{multline*}
    \mathbb{E}\left[\left(\widetilde{\mu}_n(X) - \widetilde{\mu}_{n+m}(X)\right)^2\right]
     = \mathbb{E}\left[\left(\sum_{k\in [m]}  \widetilde{\mu}_{n+k-1}(X) - \widetilde{\mu}_{n+k}(X)\right)^2\right]\\
    =\sum_{k\in [m]} \mathbb{E}\left[\left(\widetilde{\mu}_{n+k-1}(X) - \widetilde{\mu}_{n+k}(X)\right)^2\right]  + \sum_{k\neq k'\in[m]}\mathbb{E}\left[ \left(\widetilde{\mu}_{n+k-1}(X) - \widetilde{\mu}_{n+k}(X)\right)\cdot \left(\widetilde{\mu}_{n+k'-1}(X) - \widetilde{\mu}_{n+k'}(X)\right)\right].
\end{multline*}
For the first term, we have
\[\sum_{k\in [m]} \mathbb{E}\left[\left(\widetilde{\mu}_{n+k-1}(X) - \widetilde{\mu}_{n+k}(X)\right)^2\right] \leq \sum_{k\in[m]} \frac{1}{4(n+k-1)}\cdot \frac{N}{n+k-N} \leq \frac{m}{4n}\cdot \frac{N}{n+1-N}, \]
from the calculations above. Next we will verify that the second term is zero. Assume $k<k'$ without loss of generality. We need to verify that
\[\mathbb{E}\left[ \left(\widetilde{\mu}_{n+k-1}(X) - \widetilde{\mu}_{n+k}(X)\right)\cdot \left(\widetilde{\mu}_{n+k'-1}(X) - \widetilde{\mu}_{n+k'}(X)\right)\right]=0.\]
First, for any $i\in[n+k]$, define 
\[\widetilde{\mu}_{[n+k]\backslash i} = \widetilde{\mathcal{A}}_{\textnormal{bag}}(\mathcal{D}_{[n+k]\backslash i})\textnormal{ where }\mathcal{D}_{[n+k]\backslash i} = \big\{(X_j,Y_j)\big\}_{j\in [n+k]\backslash i} ,\]
the model fitted to the data set $\mathcal{D}_{n+k}$ with data point $(X_i,Y_i)$ removed. Then we must have
\begin{multline*}\mathbb{E}\left[ \left(\widetilde{\mu}_{n+k-1}(X) - \widetilde{\mu}_{n+k}(X)\right)\cdot \left(\widetilde{\mu}_{n+k'-1}(X) - \widetilde{\mu}_{n+k'}(X)\right)\right]\\
= \mathbb{E}\left[ \left(\widetilde{\mu}_{[n+k]\backslash i}(X) - \widetilde{\mu}_{n+k}(X)\right)\cdot \left(\widetilde{\mu}_{n+k'-1}(X) - \widetilde{\mu}_{n+k'}(X)\right)\right],\end{multline*}
by symmetry (specifically, because the data points are i.i.d., and the regression algorithm treats data points symmetrically---and therefore, the expected value does not change if we swap the $i$th and the $(n+k)$th data points). Therefore, taking an average over all $i$, we have
\begin{multline*}\mathbb{E}\left[ \left(\widetilde{\mu}_{n+k-1}(X) - \widetilde{\mu}_{n+k}(X)\right)\cdot \left(\widetilde{\mu}_{n+k'-1}(X) - \widetilde{\mu}_{n+k'}(X)\right)\right]\\
= \frac{1}{n+k}\sum_{i=1}^{n+k}\mathbb{E}\left[ \left(\widetilde{\mu}_{[n+k]\backslash i}(X) - \widetilde{\mu}_{n+k}(X)\right)\cdot \left(\widetilde{\mu}_{n+k'-1}(X) - \widetilde{\mu}_{n+k'}(X)\right)\right]\\
= \mathbb{E}\left[ \left( \frac{1}{n+k}\sum_{i=1}^{n+k}\widetilde{\mu}_{[n+k]\backslash i}(X) - \widetilde{\mu}_{n+k}(X)\right)\cdot \left(\widetilde{\mu}_{n+k'-1}(X) - \widetilde{\mu}_{n+k'}(X)\right)\right].
\end{multline*}
Next we have
\begin{multline*}
    \frac{1}{n+k}\sum_{i=1}^{n+k}\widetilde{\mu}_{[n+k]\backslash i}(X) = \frac{1}{n+k}\sum_{i=1}^{n+k} \frac{1}{{n+k-1\choose N}}\sum_{S\subseteq [n+k]\backslash i, |S|=N} \widehat\mu_S(X)\\
= \frac{1}{{n+k\choose N}}\sum_{S\subseteq[n+k], |S|=N} \widehat\mu_S(X) = \widetilde{\mu}_{n+k}(X).
\end{multline*}
In other words, the average $\frac{1}{n+k}\sum_{i=1}^{n+k}\widetilde{\mu}_{[n+k]\backslash i}(X)$ is constructed by randomly sampling $i\in[n+k]$, and then randomly sampling a subset $S\subseteq[n+k]\backslash i$; this is equivalent to simply sampling $S\subseteq [n+k]$, i.e., computing $\widetilde{\mu}_{n+k}(X)$.
Therefore, \[\left( \frac{1}{n+k}\sum_{i=1}^{n+k}\widetilde{\mu}_{[n+k]\backslash i}(X) - \widetilde{\mu}_{n+k}(X)\right)=0\] almost surely. This verifies that $\mathbb{E}\left[ \left(\widetilde{\mu}_{n+k-1}(X) - \widetilde{\mu}_{n+k}(X)\right)\cdot \left(\widetilde{\mu}_{n+k'-1}(X) - \widetilde{\mu}_{n+k'}(X)\right)\right]=0$ for any $k\neq k'$.

Combining our calculations, we have the bound
\[\mathbb{E}\left[\left|\widetilde{\mu}_n(X) - \widetilde{\mu}_{n+m}(X)\right|\right]
\leq \mathbb{E}\left[\left(\widetilde{\mu}_n(X) - \widetilde{\mu}_{n+m}(X)\right)^2\right]^{1/2} \leq \sqrt{\frac{m}{4n}\cdot \frac{N}{n+1-N}}.\]
In other words, we have established an $m$-stability bound for derandomized bagging, i.e., $\widetilde{\mathcal{A}}_{\textnormal{bag}}$. 

Now we return to the setting of finitely many bags. Writing $\widehat\mu_n = \mathcal{A}_{\textnormal{bag}}(\mathcal{D}_n)$ and $\widehat\mu_{n+m} = \mathcal{A}_{\textnormal{bag}}(\mathcal{D}_{n+m})$ (where as before, $\mathcal{A}_{\textnormal{bag}}$ denotes averaging over $B$ many randomly sampled bags), we have
\begin{multline*}\beta^{\textnormal{out}}_{m,n}(\mathcal{A}_{\textnormal{bag}},P)
= \mathbb{E}\left[\left|\widehat\mu_n(X) - \widehat\mu_{n+m}(X)\right|\right]
\\\leq \mathbb{E}\left[\left|\widetilde{\mu}_n(X) - \widetilde{\mu}_{n+m}(X)\right|\right]
+ \mathbb{E}\left[\left|\widehat{\mu}_{n+m}(X) - \widetilde{\mu}_{n+m}(X)\right|\right] + \mathbb{E}\left[\left|\widehat{\mu}_n(X) - \widetilde{\mu}_n(X)\right|\right]\\
\leq \sqrt{\frac{m}{4n} \cdot \frac{N}{n+1-N}}+ \mathbb{E}\left[\left|\widehat{\mu}_{n+m}(X) - \widetilde{\mu}_{n+m}(X)\right|\right] + \mathbb{E}\left[\left|\widehat{\mu}_n(X) - \widetilde{\mu}_n(X)\right|\right].
\end{multline*}
Next, conditional on $X$ and on $\mathcal{D}_{n+m}$, the prediction $\widehat{\mu}_{n+m}(X)$ is the sample mean of $n+m$ many i.i.d.\ draws from a distribution taking values in $[0,1]$, and with mean $\widetilde{\mu}_{n+m}(X)$---that is, bagging (with $B$ many bags) is simply an empirical approximation to derandomized bagging. Therefore,  $\mathbb{E}\left[\left|\widehat{\mu}_{n+m}(X) - \widetilde{\mu}_{n+m}(X)\right|\right]\leq \frac{1}{2\sqrt{B}}$. The same bound holds for $\mathbb{E}\left[\left|\widehat{\mu}_n(X) - \widetilde{\mu}_n(X)\right|\right]$. Combining everything, then, we have shown that
\[\beta^{\textnormal{out}}_{m,n}(\mathcal{A}_{\textnormal{bag}},P)\leq \sqrt{\frac{m}{4n} \cdot \frac{N}{n+1-N}} + \frac{1}{\sqrt{B}}.\]

\end{document}